\def\rgbcolor#1{\pdfliteral{#1 rg #1 RG}}
\def\lightgray{\rgbcolor{0.7 0.7 0.7}}
\def\black{\rgbcolor{0 0 0}}
\def\ghost#1{{\lightgray #1 \black}}
\newtheorem{theorem}{Theorem}
\newtheorem{lemma}[theorem]{Lemma}
\newtheorem{proposition}[theorem]{Proposition}
\newtheorem{corollary}[theorem]{Corollary}
\theoremstyle{definition}
\newtheorem*{definition}{Definition}
\theoremstyle{remark}
\newtheorem*{remark}{Remark}
\numberwithin{equation}{section}
\numberwithin{theorem}{section}
\begin{document}
\title{Rotation Remainders}
\author{P. Jameson Graber, Washington and Lee University '08}
\maketitle

\begin{abstract}
We study properties of an array of numbers, called ``the triangle," in which each row is formed by rotating all the numbers in the previous row to the left by $m$ positions in cyclical fashion, then appending a number to the end of the row.
We show that a number's position in the triangle is uniquely determined by the infinite sequence of column positions--called ``rotation remainders"--which we track as the number repeatedly rotates back to the first $m$ columns.
The rotation remainders can be viewed as the digits in a ``base $m/(m+1)$" expansion in an ``$m$-adic" topological ring of a number encoding a given position in the triangle.
Properties of these expansions are used to prove interesting claims about the triangle, such as the aperiodicity of any sequence of rotation remainders.

This article was the author's senior thesis toward the completion of a mathematics major at Washington and Lee University.
\end{abstract}

\section{Introduction}

An object in mathematics that first appears for the sake of amusement can demonstrate deep connections with well-known questions about numbers.  Consider an array of numbers formed by a rotating queue: starting with just the number 1, to obtain the next row we move everything in the last row $m$ steps to the left, with numbers at the front of the row cycling around and appearing at the back.  We then append 1 plus the head of the last row to the new row.  Here is an example showing the first few rows when $m = 3$.

\[\begin{array}{cccccccccc}
1 & & & & & & & & &\\
1 & 2 & & & & & & & & \\
2 & 1 & 2 & & & & & & &\\
2 & 1 & 2 & 3 & & & & & &\\
3 & 2 & 1 & 2 & 3 & & & & &\\
2 & 3 & 3 & 2 & 1 & 4 & & & &\\
2 & 1 & 4 & 2 & 3 & 3 & 3 & & &\\
2 & 3 & 3 & 3 & 2 & 1 & 4 & 3 & & \\
3 & 2 & 1 & 4 & 3 & 2 & 3 & 3 & 3 &\\
4 & 3 & 2 & 3 & 3 & 3 & 3 & 2 & 1 & 4\\
\end{array}\]

Note that each individual number can be tracked as it goes through the rotations; the 2 at the head of row 5 is different from the 2 at the head of row 6.  We can imagine this situation physically as people standing in line (or, better yet, a circle) with one more person being added to the line at each turn.  Before a person is added to the line, everyone in the line rotates according to the scheme described above--everyone moves up by $m$ spaces, with those at the front being moved to the back.  For some of the central questions in this paper, it may be more helpful just to view the situation in this way, and forget about the number each person is carrying.

Each number will repeatedly rotate back to one of the first $m$ columns.  Suppose we label the columns $0,1,2,\ldots$ and we want to track the following behaviors:

\begin{enumerate}

\item the column positions in which a number appears as it repeatedly moves back to the first $m$ columns,

\item the frequency with which 1 is at the head of a row,

\item the frequency with which a new number appears in the array, and

\item the frequency with which a certain number appears in a row.

\end{enumerate}

This paper will explore these questions, with the largest amount of work being spent on the first question.  In this introduction, we will introduce some notation to get us started.  Then we will give our main results, which will be proved in the main body of the paper.  The introduction will conclude with an outline of the main body of the paper.

Here we introduce the main character of this paper.

\begin{definition} The symbol $T_m$ will refer to the array described above.  Occasionally we refer to $T_m$ as \emph{the triangle}. \end{definition}

\begin{definition} Let $x$ be a positive integer.  Then let $r$ be any integer, and let $r_0$ be $r$ reduced modulo $x$.  The symbol $T_m(x,r)$ will refer to the number that appears in row $x$ and column $r_0$ of the triangle $T_m$.  Note the important distinction between the \emph{triangle} $T_m$ and the \emph{number} $T_m(x,r)$. \end{definition}

\begin{definition} The positive integer $m$ will be referred to as the \emph{rotation number}. \end{definition}

We can write out the contents of $T_m$ exactly, using the following recursive relations.

\begin{itemize}
\item $T_m(1,0) = 1$, 
\item $T_m(x,r) = T_m(x-1,r+m),$ and
\item $T_m(x,x-1) = 1 + T_m(x-1,0),$
\end{itemize}
the last two conditions holding for all $x > 1$ and $0 \leq r \leq x - 2$.
Note that when we say $T_m(x,r) = T_m(x-1,r+m)$, we are secretly doing a rotation, but the ``wrap-around" is built into the definition of $T_m(x,r)$ since $r$ is reduced modulo $x$ to get the actual column position of $T_m(x,r)$.  Here is an example to show what we are doing.  We know from the opening table that the following two rows appear in $T_3$:

\[\begin{array}{cccccccccc}
3 & 2 & 1 & 4 & 3 & 2 & 3 & 3 & 3 &\\
4 & 3 & 2 & 3 & 3 & 3 & 3 & 2 & 1 & 4\\
\end{array}\]

The 4 in column 3 of row 9 moves to column 0 of row 10, the 3 in column 4 of row 9 moves to column 1 or row 10, and so on; i.e. $T_3(10,0) = T_3(9,3), T_3(10,1) = T_3(9,4),$ etc.  But when we say $T_3(10,7) = T_3(9,10)$, we are not saying the 2 in column 7 of row 10 came from column 10 of row 9, since there is nothing in column 10 of row 9.  By definition, however, $T_3(9,10) = T_3(9,1)$, since 10 reduced modulo 9 is 1.  Thus we are saying the 2 in column 7 of row 10 came from column 1 of row 9, which is exactly what happens in the rotation.  This way of defining $T_m(x,r)$ for $r \geq x$ allows us to think of each row as a cycle, which is what we will need in order to correctly think about the questions posed in this paper.

Motivated by the first question stated above, let us introduce one last notation.

\begin{definition} Let $x$ be a positive integer and let $r$ be a least nonnegative residue mod $m$.  Suppose $T_m(x,r) = N$.  We define the sequences $\{x_n(x,r)\}_{n=0}^\infty$ and $\{r_n(x,r)\}_{n=0}^\infty$ as follows.  Let $x_0(x,r) = x$ and $r_0(x,r) = r$.  Then for each positive integer $n$, let $x_n(x,r)$ and $r_n(x,r)$ be the smallest possible integers satisfying the following:

\begin{enumerate}

\item $x_0(x,r) \leq x_1(x,r) \leq \cdots \leq x_n(x,r)$;

\item $0 \leq r_n(x,r) \leq m - 1$;

\item if $x_n(x,r) = x_{n-1}(x,r)$, then $r_n(x,r) > r_{n-1}(x,r)$; and

\item $T_m(x_n(x,r),r_n(x,r)) = N$.

\end{enumerate}

The sequences $\{x_n(x,r)\}_{n=0}^\infty$ and $\{r_n(x,r)\}_{n=0}^\infty$ are said to track the number $N$ down the first $m$ columns of $T_m$. \end{definition}

In terms of people standing in a line, $\{x_n\}$ and $\{r_n\}$ would be defined as follows.  Suppose a person standing in the line wrote down his position every time he ended up in the first $m$ spots in line.  He would write down how many people are in the row (this would be $x_n$) and how far he is from the front of the line (this would be $r_n$).  To give an example, we go back to the $T_3$ and track the ``1," while filling in some empty spots:

\def\A{\fbox{\textbf 1}}

\[\begin{array}{cccccccccc}
0 & 1 & 2 \\ 
\cline{1-3} \\ [-2ex]
1 & \ghost{\A} & \ghost{\A} & & & & & & &\\
\A & 2 & \ghost{\A} & & & & & & & \\
2 & \A & 2 & & & & & & &\\
2 & \A & 2 & 3 & & & & & &\\
3 & 2 & \A & 2 & 3 & & & & &\\
2 & 3 & 3 & 2 & 1 & 4 & & & &\\
2 & \A & 4 & 2 & 3 & 3 & 3 & & &\\
2 & 3 & 3 & 3 & 2 & 1 & 4 & 3 & & \\
3 & 2 & \A & 4 & 3 & 2 & 3 & 3 & 3 &\\
4 & 3 & 2 & 3 & 3 & 3 & 3 & 2 & 1 & 4\\
\end{array}\]

The faded 1's have been added to the triangle legitimately, because $T_3(1,2) = T_3(1,1) = T_3(1,0)$ and $T_3(2,2) = T_3(2,0)$ by definition of $T(x,r)$.  By reading off the position of the 1 in the first 3 columns, we can see that

\begin{align}
\label{1 sequence} \{x_n(1,0)\} &= \{1,1,1,2,2,3,4,5,7,9,\ldots\}, \text{ and}\\
 \{r_n(1,0)\} &= \{0,1,2,0,2,1,1,2,1,2,\ldots\}. \notag
\end{align}

There is an interesting way to encode the sequences $\{x_n(x,r)\}$ and $\{r_n(x,r)\}$.  Let $y_0(x,r) = (m+1)x_0(x,r) + r_0(x,r) = (m+1)x + r$.  For each $n \in \mathbb{N}$, let $y_n(x,r) = \left\lfloor \frac{(m+1)y_{n-1}(x,r)}{m} \right\rfloor$.  The following result shows why this is such a cleverly designed sequence.

\begin{proposition} \label{y_n} For each non-negative integer $n$, $\left\lfloor \frac{y_n(x,r)}{m+1} \right\rfloor = x_n(x,r)$ and $y_n(x,r) \equiv r_n(x,r) \bmod{(m+1)}$.  Stated more colloquially, $x_n(x,r)$ is the quotient and $r_n(x,r)$ is the remainder when $y_n(x,r)$ is divided by $m + 1$. \end{proposition}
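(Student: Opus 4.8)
The plan is to prove the statement by induction on $n$, the whole content being a single ``one-step'' fact about how a number moves through the triangle.

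First I would record the \emph{one-step recursion} for the tracking sequences: if $N = T_m(x,r)$ with $0 \le r \le m-1$, and we follow the particular occurrence of $N$ sitting in column $r$ of row $x$ to its next appearance in the first $m$ columns, at $(x', r')$, then
\[
x' = x + \left\lfloor \frac{x+r}{m} \right\rfloor, \qquad r' = (x+r) \bmod m .
\]
To prove this I would trace that occurrence forward through the defining relations. Reading $T_m(x,r) = T_m(x-1,r+m)$ in the increasing direction, and keeping in mind that column indices are reduced modulo the row length, an entry in column $c$ of row $L$ reappears in column $(c-m) \bmod L$ of row $L+1$; as $c$ ranges over $0,\dots,L-1$ this is a bijection onto columns $0,\dots,L-1$ of row $L+1$, the remaining column $L$ holding the newly appended number. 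Also, the cyclic reduction built into $T_m(L,\cdot)$ places that same entry simultaneously in columns $c, c+L, c+2L, \dots$ of its own row. Two regimes arise. If $x+r < m$, the occurrence already reappears inside row $x$ at column $x+r < m$, which is its next visit to the first $m$ columns; here $\lfloor (x+r)/m \rfloor = 0$ and the formulas hold. If $x+r \ge m$, then with $q = \lfloor (x+r)/m \rfloor \ge 1$ I would show by induction on $j$ that for $1 \le j \le q$ the occurrence sits in column $x+r-jm$ of row $x+j$ — each step is just $(c-m) \bmod L$ with $c = x+r-(j-1)m$ and $L = x+j-1$, and one checks that $x+r-jm$ always lands back in $\{0,\dots,L-1\}$, so no extra wrapping happens. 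That column is at least $m$ for $j < q$ and equals $(x+r) \bmod m \in \{0,\dots,m-1\}$ at $j = q$; together with a verification that no within-row cyclic copy reaches the first $m$ columns sooner, this gives $(x',r') = (x+q,\,(x+r)\bmod m)$.

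Granting the one-step recursion, the induction is routine. For $n = 0$, $y_0 = (m+1)x + r$ with $0 \le r \le m-1$, so $\lfloor y_0/(m+1) \rfloor = x = x_0$ and $y_0 \equiv r = r_0 \pmod{m+1}$. For the inductive step, assume $y_{n-1} = (m+1)x_{n-1} + r_{n-1}$ with $0 \le r_{n-1} \le m-1$, and set $q = \lfloor (x_{n-1}+r_{n-1})/m \rfloor$ and $s = (x_{n-1}+r_{n-1}) \bmod m$. A direct computation of the floor gives
\[
y_n = \left\lfloor \frac{(m+1)y_{n-1}}{m} \right\rfloor = (m+2)x_{n-1} + r_{n-1} + q = (m+1)(x_{n-1}+q) + s .
\]
By the one-step recursion, $x_n = x_{n-1}+q$ and $r_n = s$, and $0 \le s \le m-1 < m+1$; hence $\lfloor y_n/(m+1) \rfloor = x_n$ and $y_n \equiv r_n \pmod{m+1}$, completing the induction.

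The main obstacle is the one-step recursion; the algebra in the inductive step is mechanical. The care needed there is that the modulus — the row length — changes at every rotation, so one cannot just subtract $qm$ in a single move; that when $x$ is small relative to $m$ the cyclic copies of an entry within a single row must be tracked explicitly; and that one must confirm the occurrence being followed is the one picked out by the minimality in the definition of $x_n(x,r)$ and $r_n(x,r)$, i.e.\ that no other occurrence of $N$ reaches the first $m$ columns first.
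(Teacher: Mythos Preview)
Your argument is correct and follows essentially the same route as the paper. The paper first derives the one-step recursion $x_{n+1} = x_n + \lfloor (x_n+r_n)/m \rfloor$, $r_{n+1} = (x_n+r_n)\bmod m$ (packaged as the identity $(m+1)x_n + r_n = m x_{n+1} + r_{n+1}$), and then inducts; your inductive step is the same computation, just organized around $q$ and $s$ rather than invoking that identity directly. You are in fact somewhat more careful than the paper about the regime $x+r<m$ and about the minimality clause in the definition of $x_n,r_n$, which the paper leaves implicit.
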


Now that we have introduced all the necessary notation concerning the elements of $T_m$, we are ready to give our main results.  The first theorem requires a definition.

\begin{definition} \label{D_m} Let $D_m$ be the set of all rational numbers that can be written in the form $a/b$, where $a$ and $b$ are integers that are relatively prime and $b$ is relatively prime to $m$. \end{definition}

In the next section, we will prove that $D_m$ is a topological ring.  We will then make our way to a proof of the following theorem.

\begin{theorem} \label{m-adic sum} There exists a topological ring $\mathbb{Q}_m$ with the following properties:
\begin{enumerate}

\item $\mathbb{Q}_m$ is a complete metric space.

\item $\mathbb{Q}_m$ contains a sub-ring isomorphic to $D_m$.

\item In $\mathbb{Q}_m$, for any starting row $x$ and column $r$, 
\begin{equation} \label{main expansion}
m \cdot y_0(x,r) = \sum_{k=1}^\infty \left(\frac{m}{m+1}\right)^k r_k(x,r).
\end{equation}
\end{enumerate}
\end{theorem}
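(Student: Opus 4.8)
The plan is to take $\mathbb{Q}_m$ to be the completion of $D_m$ with respect to the $m$-adic metric constructed in the next section — or, so that $\mathbb{Q}_m$ specializes to $\mathbb{Q}_p$ when $m=p$ is prime, that completion with $m$ inverted. Parts (1) and (2) are then formal: a metric completion is complete by construction, and $D_m$ embeds in it because $\bigcap_n m^n D_m=\{0\}$ (a nonzero element $a/b$ of $D_m$ can have only finitely many factors of $m$ dividing $a$, since $b$ is prime to $m$). It is convenient to keep in mind the concrete model $\prod_{p\mid m}\mathbb{Z}_p$ for the completion, furnished by the Chinese Remainder Theorem, which makes both completeness and the non-Archimedean absolute value $\lVert a\rVert_m=\max_{p\mid m}\lvert a\rvert_p$ transparent. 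The real content is part (3).

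The engine is a single arithmetic observation about the recursion $y_n=\lfloor(m+1)y_{n-1}/m\rfloor$. Writing division with remainder, $(m+1)y_{n-1}=m\,y_n+s_n$ with $0\le s_n\le m-1$. I claim $s_n=r_n(x,r)$: reducing modulo $m+1$ and using $m\equiv-1$ gives $s_n\equiv-m\,y_n\equiv y_n\pmod{m+1}$, while Proposition~\ref{y_n} gives $r_n(x,r)\equiv y_n\pmod{m+1}$; since both $s_n$ and $r_n(x,r)$ lie in $\{0,1,\ldots,m-1\}$, they are equal. Hence
\[
(m+1)\,y_{n-1}(x,r)=m\,y_n(x,r)+r_n(x,r)\qquad\text{for all }n\ge1.
\]

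Now I would telescope. Set $z_n=\bigl(\tfrac{m}{m+1}\bigr)^{n}y_n$, which lies in $D_m$ because $\gcd(m,m+1)=1$ makes $\tfrac{m}{m+1}\in D_m$. Multiplying the displayed identity by $m^{n-1}/(m+1)^{n}$ rewrites it as $z_{n-1}-z_n=\tfrac{m^{n-1}}{(m+1)^{n}}\,r_n(x,r)$, and summing over $n=1,\ldots,N$ collapses to the exact identity, valid already in $D_m$,
\[
m\,y_0(x,r)=m\,z_N+\sum_{k=1}^{N}\Bigl(\tfrac{m}{m+1}\Bigr)^{k}r_k(x,r).
\]
It remains to show the remainder $m\,z_N=m\bigl(\tfrac{m}{m+1}\bigr)^{N}y_N$ tends to $0$ in $\mathbb{Q}_m$. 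For each prime $p\mid m$ we have $v_p(m+1)=0$, and $v_p(y_N)\ge0$ because the $y_N$ are positive integers, so $v_p(m\,z_N)=(N+1)v_p(m)+v_p(y_N)\ge(N+1)v_p(m)\to\infty$; hence $\lVert m\,z_N\rVert_m\to0$. Therefore the partial sums above converge in $\mathbb{Q}_m$, and letting $N\to\infty$ gives $(\ref{main expansion})$.

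The hard part is not the algebra but the point hidden in the last step: the integers $y_N$ grow geometrically, at rate $\tfrac{m+1}{m}$, so that $m\,z_N$ stays bounded away from $0$ in the usual absolute value; the series on the right of $(\ref{main expansion})$ does converge in $\mathbb{R}$, but to a number strictly smaller than $m\,y_0$. Thus $(\ref{main expansion})$ is intrinsically a statement about the $m$-adic completion and has no real-analytic counterpart, and it is precisely this phenomenon that forces the construction of $\mathbb{Q}_m$. The one genuine prerequisite, supplied in the next section, is that $D_m$ really does carry a well-defined $m$-adic metric making it a topological ring, and that its completion is a complete metric space containing (a copy of) $D_m$.
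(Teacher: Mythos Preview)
Your argument is correct and lands on the same mathematical content as the paper, but the organization is different in two respects worth noting. First, the paper obtains the key recurrence $(m+1)x_n+r_n=mx_{n+1}+r_{n+1}$ directly from the combinatorics of the rotation (tracing an element back through the first $m$ columns), whereas you extract the equivalent identity $(m+1)y_{n-1}=my_n+r_n$ from the floor-function definition of $y_n$ together with Proposition~\ref{y_n}; since Proposition~\ref{y_n} is itself proved from that combinatorial recurrence, your route is logically dependent on the same step, just accessed through a cleaner interface. Second, where the paper unwinds the recurrence by an inductive chain of congruences (Proposition~\ref{mod m^n}) to show $(m+1)^N y_0\equiv\sum_{k=1}^N(m+1)^{N-k}m^{k-1}r_k\pmod{m^N}$, you instead telescope with $z_n=(m/(m+1))^n y_n$ to obtain the \emph{exact} identity $my_0=mz_N+\sum_{k=1}^N(m/(m+1))^k r_k$ in $D_m$ and then let $mz_N\to 0$; clearing denominators in your identity reproduces the paper's congruence verbatim, so the two are equivalent, but your packaging makes the error term explicit and the convergence argument a one-liner. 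Your parenthetical about optionally inverting $m$ to match $\mathbb{Q}_p$ is harmless but diverges from the paper's convention, which keeps $1/m\notin\mathbb{Q}_m$ (see Remark~\ref{m-adic sum remark 2}).
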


\begin{remark} \label{m-adic sum remark} As a consequence of part (3) in Theorem \ref{m-adic sum}, the sequence $\{r_n(x,r)\}_{n=1}^\infty$ determines $y_0(x,r)$ and hence determines $x$ and $r$.  We might even be tempted to think of the sequence $\{r_n(x,r)\}$ as the ``base $\frac{m}{m+1}$" expansion of $y_0(x,r)$.  We will show later how such expansions can be treated as elements in a ring independent of their corresponding elements in $D_m$. \end{remark}

\begin{remark} \label{m-adic sum remark 2} Although $1/m$ is not an element of $\mathbb{Q}_m$, it is still reasonable to write \[y_0(x,r) = \dfrac{1}{m} \sum_{k=1}^\infty \left(\frac{m}{m+1}\right)^k r_k(x,r),\] because all the terms in the sum on the right have at least one factor of $m$ in their numerators. \end{remark}

Theorem \ref{m-adic sum} tells us that in the triangle $T_m$, the column positions of a number as it appears within the first $m$ columns can be used to determine the position in which the number started.  This is somewhat surprising--although it seems natural to determine the column positions from the starting position, it is difficult to imagine doing the opposite.  Nevertheless, this special sequence $\{r_n(x,r)\}$ can be used to determine $x$ and $r$ using this bizarre sum.  In terms of people standing in a line, it means that no two people can appear to have parallel paths.  In other words, if you are standing in the line, writing down how far you are from the front each time you make it to the first $m$ spaces in the line, the sequence you get is unique to you.  In fact, every \emph{tail} of this sequence is unique to you, as the following corollary implies.

\begin{corollary} \label{tail story} Suppose we have $\bar{x}$ and $\bar{r}$ such that $\{r_n(\bar{x},\bar{r})\}_{n=0}^\infty$ is a tail of $\{r_n(x,r)\}_{n=0}^\infty$.  That is, suppose there exists some positive integer $j$ such that $r_n(\bar{x},\bar{r}) = r_{n+j}(x,r)$ for every $n$.  Then $\bar{x} = x_j(x,r)$ and $\bar{r} = r_j(x,r)$. \end{corollary}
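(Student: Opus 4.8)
The plan is to reduce the corollary to the uniqueness already packaged in Theorem \ref{m-adic sum}. The key observation is that re-tracking the original number \emph{starting from} its $j$-th recorded position $(x_j(x,r),r_j(x,r))$ simply reproduces the shifted sequences. Concretely, I would first check that
\[ y_0\bigl(x_j(x,r),\,r_j(x,r)\bigr) = y_j(x,r). \]
This is immediate from Proposition \ref{y_n}: since $0 \le r_j(x,r) \le m-1$, the integer $r_j(x,r)$ is the least nonnegative residue of $y_j(x,r)$ modulo $m+1$, while $x_j(x,r) = \lfloor y_j(x,r)/(m+1)\rfloor$, so $(m+1)x_j(x,r) + r_j(x,r)$ is exactly the division-with-remainder decomposition of $y_j(x,r)$. (Note also that $r_j(x,r)\in\{0,\dots,m-1\}$ is a legitimate starting column, so the sequences tracked from $(x_j(x,r),r_j(x,r))$ are defined.)

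Next, a one-line induction on $n$ using the defining recursion $y_n(\,\cdot\,) = \lfloor (m+1)\,y_{n-1}(\,\cdot\,)/m\rfloor$, with the base case just established, shows $y_n\bigl(x_j(x,r),r_j(x,r)\bigr) = y_{n+j}(x,r)$ for all $n\ge 0$. Taking least nonnegative residues modulo $m+1$ and invoking Proposition \ref{y_n} once more gives $r_n\bigl(x_j(x,r),r_j(x,r)\bigr) = r_{n+j}(x,r)$ for all $n\ge 0$; that is, tracking from $(x_j(x,r),r_j(x,r))$ yields precisely the tail sequence appearing in the hypothesis.

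By hypothesis that tail sequence is also $\{r_n(\bar x,\bar r)\}_{n=0}^\infty$, so the two pairs $(\bar x,\bar r)$ and $(x_j(x,r),r_j(x,r))$ have identical rotation-remainder sequences. Now apply part (3) of Theorem \ref{m-adic sum} to each pair: the right-hand sides of \eqref{main expansion} agree term by term, hence $m\cdot y_0(\bar x,\bar r) = m\cdot y_0\bigl(x_j(x,r),r_j(x,r)\bigr)$ in $\mathbb{Q}_m$. Both sides are ordinary integers, and since $\mathbb{Q}_m$ contains a subring isomorphic to $D_m\supseteq\mathbb{Z}$ (part (2)), the embedding of $\mathbb{Z}$ is injective, so the equality holds in $\mathbb{Z}$; cancelling the nonzero integer $m$ gives $y_0(\bar x,\bar r) = y_0\bigl(x_j(x,r),r_j(x,r)\bigr)$. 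Writing both as $(m+1)(\text{row}) + (\text{column})$ with the column in $\{0,\dots,m-1\}$ and comparing quotient and remainder on division by $m+1$ yields $\bar x = x_j(x,r)$ and $\bar r = r_j(x,r)$, as desired; this last passage is exactly the uniqueness already recorded in Remark \ref{m-adic sum remark}.

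The only real subtlety — the step I expect to be the main obstacle to write carefully — is converting the equality of two infinite series in $\mathbb{Q}_m$ back into an equality of integers: one must be sure that $m\,y_0(\bar x,\bar r)$ and $m\,y_0(x_j(x,r),r_j(x,r))$ are being compared inside the copy of $D_m$ sitting in $\mathbb{Q}_m$ and that this copy is genuinely isomorphic to $D_m$, so that distinct integers remain distinct. Everything else is a routine unwinding of the definitions of $y_n$, $x_n$, and $r_n$ together with Proposition \ref{y_n}.
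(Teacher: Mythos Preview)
Your proof is correct and follows essentially the same route as the paper's. Both arguments reduce to applying Theorem~\ref{m-adic sum} to the pair $(x_j(x,r),r_j(x,r))$ and to $(\bar x,\bar r)$, obtaining $(m+1)\bar x+\bar r=(m+1)x_j(x,r)+r_j(x,r)$, and then reading off the conclusion by quotient--remainder uniqueness modulo $m+1$. The only cosmetic difference is how the intermediate fact ``tracking from $(x_j,r_j)$ reproduces the $j$-shifted tail'' is justified: you derive it from the $y_n$-recursion and Proposition~\ref{y_n}, whereas the paper obtains the equivalent identity $m\bigl((m+1)x_j+r_j\bigr)=\sum_{k\ge j+1}(m/(m+1))^{k-j}r_k(x,r)$ directly from the congruences in Proposition~\ref{mod m^n}; the two are interchangeable, and your worry about pulling the equality back from $\mathbb{Q}_m$ to $\mathbb{Z}$ is handled exactly as you say.
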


Another consequence is that the tail of a sequence of $\{r_n(x,r)\}_{n=0}^\infty$ determines the beginning of the sequence, no matter where we choose to start the tail.

\begin{corollary} \label{equal tails} Suppose $x, r, \bar{x}$ and $\bar{r}$ are integers such that $\{r_n(x,r)\}_{n=j}^\infty = \{r_n(\bar{x},\bar{r})\}_{n=j}^\infty$ for some positive integer $j$.  Then $\{r_n(x,r)\}_{n=0}^\infty = \{r_n(\bar{x},\bar{r})\}_{n=0}^\infty$. \end{corollary}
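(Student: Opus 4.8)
The plan is to show that the encoded integers satisfy $y_j(x,r) = y_j(\bar x,\bar r)$, and then to invert the recursion $y_{n-1}\mapsto y_n$ repeatedly to obtain $y_0(x,r)=y_0(\bar x,\bar r)$; by Proposition \ref{y_n} this forces $x=\bar x$ and $r=\bar r$, so all four sequences coincide. The bridge from a ``tail'' to a ``fresh start'' will be a shift identity, and the bridge from rotation remainders to $y$-values will be Theorem \ref{m-adic sum}.

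First I would record the \emph{shift identity}: for every position $(x,r)$ and every $j\ge 0$,
\[ x_{n+j}(x,r)=x_n\bigl(x_j(x,r),r_j(x,r)\bigr),\qquad r_{n+j}(x,r)=r_n\bigl(x_j(x,r),r_j(x,r)\bigr)\quad(n\ge 0), \]
and hence $y_0\bigl(x_j(x,r),r_j(x,r)\bigr)=y_j(x,r)$. The $y$-equation is immediate from Proposition \ref{y_n}, because $y_j=(m+1)x_j+r_j$, which is precisely $y_0$ of the position $(x_j,r_j)$. The sequence equation follows by induction on $n$: conditions (1)--(4) determine each term $(x_n,r_n)$ uniquely from its predecessor (the minimal pair with $x_n\ge x_{n-1}$, $0\le r_n\le m-1$, $x_n=x_{n-1}\Rightarrow r_n>r_{n-1}$, and $T_m(x_n,r_n)=N$), and these are exactly the constraints defining the corresponding term of the sequences started at $(x_j,r_j)$, which track the same number $N=T_m(x_j(x,r),r_j(x,r))$.

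Applying the shift identity to both starting positions, the hypothesis that $r_n(x,r)=r_n(\bar x,\bar r)$ for all $n\ge j$ becomes $r_n\bigl(x_j(x,r),r_j(x,r)\bigr)=r_n\bigl(x_j(\bar x,\bar r),r_j(\bar x,\bar r)\bigr)$ for all $n\ge 0$. By Theorem \ref{m-adic sum}(3) (compare Remark \ref{m-adic sum remark}), the quantity $m\cdot y_0$ attached to a position is an explicit function, inside $\mathbb{Q}_m$, of that position's rotation remainders; hence $m\,y_j(x,r)=m\,y_j(\bar x,\bar r)$ in $\mathbb{Q}_m$. Since $D_m$ is a sub-ring of $\mathbb{Q}_m$ and both $y_j(x,r)$ and $y_j(\bar x,\bar r)$ are (positive) integers, this equation already holds in $D_m\subseteq\mathbb{Q}$, where the nonzero factor $m$ may be cancelled; thus $y_j(x,r)=y_j(\bar x,\bar r)$.

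Finally I would invert the recursion: setting $f(a)=\left\lfloor\frac{(m+1)a}{m}\right\rfloor$, one checks $f(a+1)\ge f(a)+1$ for every positive integer $a$, so $f$ is strictly increasing, hence injective, on the positive integers, while every $y_n(x,r)$ is a positive integer ($y_0\ge m+1$ and $y_n\ge y_{n-1}$). Since $y_j(x,r)=f\bigl(y_{j-1}(x,r)\bigr)$ and likewise for the barred data, injectivity of $f$ propagates the equality $y_j(x,r)=y_j(\bar x,\bar r)$ downward, giving $y_0(x,r)=y_0(\bar x,\bar r)$; then $x=\bar x$ and $r=\bar r$ by Proposition \ref{y_n}, and the two full sequences coincide. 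I expect the one genuinely delicate step to be the shift identity, where one must verify that the minimality clauses defining $\{x_n\}$ and $\{r_n\}$ depend only on the immediately preceding term, so that restarting the construction at $(x_j,r_j)$ reproduces the tail term-for-term; a minor secondary point is that cancelling $m$ is valid only after transporting the identity into $D_m\subseteq\mathbb{Q}$, since $m$ is not a unit in $\mathbb{Q}_m$.
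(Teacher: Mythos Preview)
Your proof is correct and follows essentially the same route as the paper. The paper packages your first three steps as an application of Corollary \ref{tail story} (whose proof is itself the computation $m\,y_j(x,r)=\sum_{k\ge j+1}(m/(m+1))^{k-j}r_k$, i.e.\ your shift identity combined with Theorem \ref{m-adic sum}), and then inverts via Equation \eqref{recurrence} on the pair $(x_n,r_n)$ rather than via monotonicity of $a\mapsto\lfloor (m+1)a/m\rfloor$ on $y_n$; by Proposition \ref{y_n} these two inversions are equivalent, so the only difference is packaging.
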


An intriguing fact about $\{r_n(x,r)\}$ is that it turns out not to have any repeating patterns in it.  Suppose again you are standing in the line, writing down how far you are from the front each time you make it to the first $m$ spaces in the line.  Then you notice that you have written the same few numbers several times over in the same order.  You might wonder whether this will go on forever.  The following theorem assures you that it will not.

\begin{theorem} \label{aperiodic} For any starting row $x$ and column $r$, the sequence of column positions $\{r_n(x,r)\}$ is aperiodic. \end{theorem}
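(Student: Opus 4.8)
The plan is to argue by contradiction: eventual periodicity of $\{r_n(x,r)\}$ would force the entire tracking state $(x_n,r_n)$ to be eventually periodic, and that cannot happen because $x_n(x,r)\to\infty$. So the first thing I would record is exactly that growth. Since $y_0(x,r)=(m+1)x+r\ge m+1>m$, and since $\lfloor(m+1)t/m\rfloor=t+\lfloor t/m\rfloor\ge t+1$ for every integer $t\ge m$, a one-line induction gives $y_0(x,r)<y_1(x,r)<y_2(x,r)<\cdots$, so $y_n(x,r)\to\infty$. By Proposition \ref{y_n}, $x_n(x,r)=\lfloor y_n(x,r)/(m+1)\rfloor$, hence $x_n(x,r)\to\infty$ as well; in particular the non-decreasing sequence $\{x_n(x,r)\}$ is not eventually constant.

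Next I would make precise the self-similarity of the tracking construction: for every $j\ge 0$, the tail $\{(x_n(x,r),r_n(x,r))\}_{n=j}^\infty$ coincides with the tracking sequence $\{(x_n(\bar x,\bar r),r_n(\bar x,\bar r))\}_{n=0}^\infty$ of the position $(\bar x,\bar r):=(x_j(x,r),r_j(x,r))$ (note $0\le\bar r\le m-1$, so this is a legitimate starting position). This is immediate from conditions (1)--(4) of the definition: with $N=T_m(x,r)$, the tracking sequence of any such $(\bar x,\bar r)$ is simply the increasing enumeration, ordered first by row and then by column, of all pairs $(X,R)$ with $0\le R\le m-1$, $T_m(X,R)=N$ and $(X,R)\ge_{\mathrm{lex}}(\bar x,\bar r)$; restarting that enumeration at its $j$-th entry reproduces the tail exactly. (This is the fact underlying Corollary \ref{tail story}, so it may already be available in the body.)

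With these in hand the argument is short. Suppose $\{r_n(x,r)\}$ is eventually periodic, say $r_{n+p}(x,r)=r_n(x,r)$ for all $n\ge j$ with $p\ge 1$, and set $(\bar x,\bar r)=(x_j(x,r),r_j(x,r))$ and $(\hat x,\hat r)=(x_{j+p}(x,r),r_{j+p}(x,r))$. By self-similarity, $\{r_n(\bar x,\bar r)\}_{n\ge 0}=\{r_{n+j}(x,r)\}_{n\ge 0}$ and $\{r_n(\hat x,\hat r)\}_{n\ge 0}=\{r_{n+j+p}(x,r)\}_{n\ge 0}$; since $n+j\ge j$ for all $n\ge 0$, the assumed periodicity makes these two tracking sequences equal. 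But a tracking sequence determines $y_0$, hence its starting position (Remark \ref{m-adic sum remark}, from part (3) of Theorem \ref{m-adic sum}), so $(\bar x,\bar r)=(\hat x,\hat r)$. Applying self-similarity once more, the tails $\{(x_n,r_n)\}_{n\ge j}$ and $\{(x_n,r_n)\}_{n\ge j+p}$ are the tracking sequences of $(\bar x,\bar r)$ and of $(\hat x,\hat r)=(\bar x,\bar r)$ respectively, hence identical; thus $x_{n+p}(x,r)=x_n(x,r)$ for all $n\ge j$, and combined with $x_n\le x_{n+1}\le\cdots\le x_{n+p}$ this forces $\{x_n(x,r)\}$ to be eventually constant, contradicting the first step. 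Therefore $\{r_n(x,r)\}$ is aperiodic (indeed, not even eventually periodic).

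I expect the only genuinely delicate point to be the self-similarity claim of the second paragraph: it must be stated carefully enough — including the behaviour in the first few rows, where a single number can occupy several of the first $m$ columns — that "tail $=$ tracking sequence of the later position" is visibly a consequence of the definition rather than of anything substantive. Everything else is bookkeeping together with the cited uniqueness theorem and the elementary unboundedness of $x_n$. (Throughout I am tacitly using $m\ge 2$, as Theorem \ref{m-adic sum} requires; for $m=1$ the sequence $\{r_n(x,r)\}$ is identically $0$.)
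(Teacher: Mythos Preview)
Your proof is correct and follows essentially the same route as the paper: both establish that $y_n$ is strictly increasing, both use the uniqueness consequence of Theorem~\ref{m-adic sum} to deduce that periodicity of $\{r_n\}$ forces a repeat in the state $(x_n,r_n)$ (equivalently in $y_n$), and both derive a contradiction from the growth of $y_n$. The only packaging differences are that you prove your ``self-similarity'' directly from the definition whereas the paper invokes the already-proved Corollary~\ref{tail story}, and you phrase the contradiction as ``$x_n$ eventually constant'' rather than ``$y_p=y_0$''; you also treat eventual periodicity in one pass, which the paper relegates to a closing remark.
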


The final result in our introduction focuses on a particular triangle, $T_2$.  The general question, whether or not every person in the line will eventually come to the front of the line, is a very difficult question for $m > 2$.  However, the answer is fairly easy for $m = 2$, and we will spend an entire section later on finding out as much as we can about what happens in this case.

\begin{theorem} \label{m equals 2} A given number in $T_2$ will appear at the head of infinitely many rows.
\end{theorem}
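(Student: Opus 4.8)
The plan is to translate the statement into a claim about the sequence $\{r_n(x,r)\}$ and then to exploit the especially simple form that the recursion for $y_n$ takes when $m = 2$. A tracked number sits at the head of a row exactly when it occupies column $0$, and since $\{x_n(x,r)\}$ and $\{r_n(x,r)\}$ record every appearance of the number within the first $m = 2$ columns, the number is at the head of infinitely many rows if and only if $r_n(x,r) = 0$ for infinitely many $n$. (Within any maximal block of indices on which $x_n$ is constant, condition (3) in the definition forces $r_n$ to be strictly increasing, hence to equal $0$ at most once; so distinct indices $n$ with $r_n = 0$ really do correspond to distinct rows.) It therefore suffices to prove that $r_n(x,r) = 0$ for infinitely many $n$.

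First I would record the shape of the recursion when $m = 2$, namely $y_n = \lfloor 3 y_{n-1}/2 \rfloor$. A one-line parity check gives: if $y_{n-1} = 2k$ then $y_n = 3k$, while if $y_{n-1} = 2k+1$ then $y_n = 3k + 1$. Combining this with Proposition \ref{y_n} (which identifies $r_n$ with $y_n \bmod 3$, where $0 \le r_n \le 1$) yields the key dichotomy: for every $n \ge 1$, $r_n(x,r) = 0$ precisely when $y_{n-1}(x,r)$ is even. Thus the theorem is equivalent to the assertion that $y_n(x,r)$ is even for infinitely many $n$.

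Now I would argue by contradiction. Suppose $y_n$ is odd for all $n \ge n_0$. Then for each $n > n_0$ the recursion collapses to $y_n = (3 y_{n-1} - 1)/2$, and this first-order linear recurrence (fixed point $1$) solves explicitly: $y_{n_0 + j} - 1 = (3/2)^j\,(y_{n_0} - 1)$ for all $j \ge 0$. By Proposition \ref{y_n}, $x_{n_0} = \lfloor y_{n_0}/(m+1) \rfloor$, and since $\{x_n\}$ is nondecreasing with $x_0 = x \ge 1$ we get $y_{n_0} \ge m + 1 = 3$, so $y_{n_0} - 1 \ge 2$. Choosing $j$ larger than the $2$-adic valuation of the fixed integer $y_{n_0} - 1$ makes $(3/2)^j (y_{n_0} - 1)$ a non-integer, contradicting the fact that every $y_n$ is an integer. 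Hence $y_n$ is even infinitely often, and the theorem follows.

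The argument is short enough that there is no single hard step; the fussiest part is the bookkeeping of the first paragraph — checking that ``appears at the head of infinitely many rows'' is faithfully encoded by ``$r_n = 0$ infinitely often,'' and in particular that repeated values of $x_n$ cannot manufacture spurious or duplicate head-occurrences. The one genuinely substantive observation is that, when $m = 2$, the floor in the recursion interacts with parity so rigidly that an eventual tail of odd values of $y_n$ would force an impossible power of $2$ into a denominator.
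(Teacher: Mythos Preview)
Your proof is correct, but it takes a genuinely different route from the paper's. The paper argues that a tail of $\{r_n\}$ containing no $0$'s would, since $m=2$, have to be the constant sequence $1$, and this is ruled out by invoking Theorem~\ref{aperiodic} (no periodic tail). That theorem in turn rests on Corollary~\ref{tail story} and hence on the full $\mathbb{Q}_m$ machinery of Theorem~\ref{m-adic sum}. You bypass all of that: you use only Proposition~\ref{y_n} to identify $r_n=0$ with $y_{n-1}$ even, then observe that an eventual run of odd $y_n$ forces the closed form $y_{n_0+j}-1=(3/2)^j(y_{n_0}-1)$, which fails to be an integer once $j$ exceeds the $2$-adic valuation of $y_{n_0}-1\ge 2$.

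What each approach buys: yours is strictly more elementary for this particular statement---it needs no completion of $D_m$, no rotation-remainder expansions, and no uniqueness result, just the integer recursion and a valuation count. The paper's approach, on the other hand, derives the result as a one-line corollary of a theorem (aperiodicity) that holds for every $m$; your argument is tailored to $m=2$ through the parity dichotomy, and while it could in principle be adapted to rule out other specific periodic tails, it does not obviously yield the general aperiodicity statement. In spirit, your computation is closer to the direct recursion analysis the paper carries out later in Section~\ref{m2case} for $l_2(n)$ than to the proof the paper actually gives for Theorem~\ref{m equals 2}.
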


\begin{remark} \label{m equals 2 remark} Theorem \ref{m equals 2} does not tell us which rows a particular number will lead.  We will discuss this in a later section. \end{remark}

These theorems constitute our main results.  To conclude this introduction, we give an outline of the main body of the paper.  We will spend the largest amount of effort on question (1) stated above, that is, on the sequences of row and column positions of an element in $T_m$.  This effort will require three sections of technical background, in which we discuss the rings $D_m$ and $\mathbb{Q}_m$ and introduce the concept of a \emph{rotation remainder expansion} of a number.  Then we will have three sections of proofs for Theorems \ref{m-adic sum},  \ref{aperiodic}, and \ref{equal tails}.  Next, we will explore the remaining questions posed in this introduction by limiting our view to the cases $m = 1$ and $m = 2$.  Finally, we will demonstrate the connection between our triangle $T_m$ and the problem of Josephus, a famous problem in number theory.

{\bf Acknowledgement.} Special thanks to Jacob Siehler for directing this senior thesis project, and for prompting the author to publish the results.

\section{The Ring $D_m$}

In the introduction, we defined $D_m$ as $\{a/b \in \mathbb{Q} : \gcd(m,b) = 1\}$.  We promised to show that $D_m$ is a topological ring.  This will be the primary purpose of this section.  Additionally, we will demonstrate that $D_m$ is not complete, thereby showing the need to introduce $\mathbb{Q}_m$ in the next section.

We now establish that $D_m$ is a ring.  In particular, it is a subring of the rational numbers.  All the properties of addition and multiplication in $D_m$ are inherited from the rationals.  Thus it suffices to show that $D_m$ is closed under these operations.

Let $a/b$ and $c/d$ be in $D_m$ so that $b$ and $d$ are each relatively prime to $m$.  Since $bd$ is also relatively prime to $m$, it follows that $\frac{a}{b} + \frac{c}{d} = \frac{ad + bc}{bd}$ and $\frac{a}{b}\frac{c}{d} = \frac{ac}{bd}$ are each in $D_m$.  Hence $D_m$ is closed under ring addition and multiplication.  Closure under additive inverses is trivial to show.  Therefore $D_m$ is a subring of the rationals.

Next, we define a norm and a metric on $D_m$.

\begin{definition} Let $a/b \in D_m$ be given.  We define $\left| \frac{a}{b} \right|_m$ to be $m^{-k}$, where $k$ is the greatest integer such that $m^k$ divides $a$.  (For $a = 0$ we say that $k = \infty$ and that $|0|_m = 0$.)  We call $| \cdot |_m$ the ``$m$-adic" norm on $D_m$. \end{definition}

Before we define a metric based on this norm, two remarks are in order.  First, the $m$-adic norm on $D_m$ is very similar to the notion of a ``$p$-adic" norm on the rationals, where $p$ is a prime \cite{gouvea1997p}.  A ``$p$-adic" norm defined on rationals $a/b$ is based on how many times $p$ divides $a$ \emph{and} how many times $p$ divides $b$.  Because $m$ need not be prime, we do not attempt to include any elements in $D_m$ that have denominators not relatively prime to $m$.  Thus, for any $a/b \in D_m$, $\left| \frac{a}{b} \right|_m \leq 1$.  The second remark is an obvious but useful observation: for any $a/b \in D_m$, $|a/b|_m = |a|_m$.  We will use this fact often without reference.

Now that we have established a norm on $D_m$, we proceed to define a metric.

\begin{definition} Let the function $d : D_m \times D_m \rightarrow [0,1]$ be defined in the following way: for any element $a/b \in D_m$, let $d\left(\frac{a}{b},\frac{c}{d}\right) = \left| \frac{a}{b} - \frac{c}{d} \right|_m.$ \end{definition}

\begin{proposition} \label{metric} The function $d$ is a metric on $D_m$. \end{proposition}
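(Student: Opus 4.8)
The plan is to verify the three defining properties of a metric — non-negativity together with the identity of indiscernibles, symmetry, and the triangle inequality — each of which will be deduced from a corresponding property of the $m$-adic norm $|\cdot|_m$. The only genuinely substantive ingredient is the non-Archimedean (ultrametric) inequality $|u+v|_m \le \max(|u|_m, |v|_m)$ for $u, v \in D_m$; the rest is formal.

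First I would dispose of the easy parts. By construction $|a/b|_m = m^{-k} > 0$ whenever $a \ne 0$, while $|0|_m = 0$ by convention, so for $u, v \in D_m$ we have $d(u,v) = |u-v|_m \ge 0$ with equality precisely when $u - v = 0$, i.e. $u = v$. Symmetry is equally immediate: $m^j$ divides an integer $a$ if and only if it divides $-a$, so $|{-a}|_m = |a|_m$, and combining this with the already-noted fact that $|a/b|_m = |a|_m$ gives $|v - u|_m = |u-v|_m$, hence $d(u,v) = d(v,u)$.

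The heart of the argument is the ultrametric inequality. Given $u = a/b$ and $v = c/d$ in $D_m$, I would write the sum over a common denominator, $u + v = (ad + bc)/(bd)$. Since $\gcd(m,b) = \gcd(m,d) = 1$ we have $\gcd(m, bd) = 1$, and any common factor of $ad + bc$ and $bd$ is therefore coprime to $m$; consequently $|u+v|_m = |ad+bc|_m$. Now let $k_1, k_2$ be the largest exponents with $m^{k_1} \mid a$ and $m^{k_2} \mid c$, so $|a|_m = m^{-k_1}$ and $|c|_m = m^{-k_2}$, and set $k = \min(k_1, k_2)$. Then $m^k$ divides both $ad$ and $bc$, hence divides $ad + bc$, so the largest power of $m$ dividing $ad + bc$ is at least $k$ and $|ad+bc|_m \le m^{-k} = \max(m^{-k_1}, m^{-k_2}) = \max(|u|_m, |v|_m)$. (If $a$ or $c$ vanishes the corresponding exponent is $\infty$ and the bound is trivial.)

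Finally, the triangle inequality follows by applying the ultrametric inequality with $u = x - y$ and $v = y - z$:
\[
d(x,z) = |x-z|_m = |(x-y) + (y-z)|_m \le \max\bigl(|x-y|_m,\, |y-z|_m\bigr) \le d(x,y) + d(y,z).
\]
I expect the one place requiring real care to be the common-denominator step in the ultrametric argument — specifically, justifying that passing from $u$ and $v$ to the fraction $(ad+bc)/(bd)$, which need not be in lowest terms, does not alter the $m$-adic norm, precisely because every factor that would be cancelled is coprime to $m$. Everything else is bookkeeping.
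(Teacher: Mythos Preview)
Your proof is correct and rests on the same idea as the paper's: both establish the triangle inequality via the divisibility fact that if $m^{k_1}\mid a$ and $m^{k_2}\mid c$ (with $b,d$ coprime to $m$) then $m^{\min(k_1,k_2)}$ divides $ad+bc$. The only difference is organizational: you isolate this as the ultrametric inequality $|u+v|_m \le \max(|u|_m,|v|_m)$ and then specialize to $u=x-y$, $v=y-z$, whereas the paper works directly with three points $x,y,z$ and their pairwise differences; your version is also a bit more careful about the identity of indiscernibles and about why reducing $(ad+bc)/(bd)$ to lowest terms cannot change the $m$-adic norm.
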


\begin{proof} Because the divisors of $a$ and $-a$ are exactly the same, $\left| \frac{a}{b} \right|_m = \left| - \frac{a}{b} \right|_m$, and thus $d$ is symmetric.  Because $|0|_m = 0$, $d\left(\frac{a}{b},\frac{a}{b}\right) = 0$.  Now we must check the triangle inequality.  Let $x = \frac{a}{b}, y = \frac{c}{d},$ and $z = \frac{e}{f}$.  Observe that \begin{equation*}x - y = \frac{ad - bc}{bd}, y - z = \frac{cf - de}{df}, \text{ and } x - z = \frac{af - be}{bf}.\end{equation*}  Let $|a|_m = m^{-k_a}, |c|_m = m^{-k_c}$ and $|e|_m = m^{-k_e}$.  The greatest power of $m$ dividing $ad - bc$ is $\min\{m^{k_a}, m^{k_c}\}$, the greatest power of $m$ dividing $cf - de$ is $\min\{m^{k_c}, m^{k_e}\}$, and the greatest power of $m$ dividing $af - be$ is $\min\{m^{k_a}, m^{k_e}\}$.  It follows that $d(x,y) = \max\{m^{-k_a},m^{-k_c}\}$, $d(y,z) = \max\{m^{-k_c},m^{-k_e}\}$, and $d(x,z) = \max\{m^{-k_a},m^{-k_e}\}$.  From this we get the decisive inequality \begin{equation*}d(x,y) + d(y,z) \geq \max\{m^{-k_a},m^{-k_c},m^{-k_e}\} \geq \max\{m^{-k_a},m^{-k_e}\} = d(x,z).\end{equation*}  Thus the triangle inequality holds, and $d$ is a metric. 
\end{proof}

Now we will show that $D_m$ is a topological ring.  We must show that addition and multiplication are continuous functions from $D_m \times D_m$ to $D_m$.  The proof for addition requires only the triangle inequality.  The proof for multiplication requires the following lemma.

\begin{lemma} \label{mult-lemma} Suppose $x$ and $y$ are in $D_m$.  Then $|xy|_m \leq |x|_m|y|_m \leq |y|_m$. \end{lemma}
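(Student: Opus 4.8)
The plan is to reduce everything to statements about divisibility of integers by powers of $m$. First I would write $x = a/b$ and $y = c/d$ with $\gcd(b,m) = \gcd(d,m) = 1$, so that $bd$ is also coprime to $m$ and $xy = ac/(bd) \in D_m$. Using the observation from this section that $|a/b|_m = |a|_m$ for any element of $D_m$, I have $|x|_m = |a|_m$, $|y|_m = |c|_m$, and $|xy|_m = |ac/(bd)|_m = |ac|_m$. Writing $|a|_m = m^{-k_a}$ and $|c|_m = m^{-k_c}$, this means $m^{k_a} \mid a$ and $m^{k_c} \mid c$.

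Next I would observe that $m^{k_a + k_c}$ divides $ac$, so if $\ell$ denotes the largest integer with $m^\ell \mid ac$, then $\ell \geq k_a + k_c$, whence
\[
|xy|_m = |ac|_m = m^{-\ell} \leq m^{-(k_a+k_c)} = m^{-k_a} m^{-k_c} = |x|_m |y|_m.
\]
This establishes the first inequality. I would note in passing that because $m$ need not be prime this inequality can be strict (for instance $m = 4$, $a = c = 2$), which is precisely why the lemma is stated with $\leq$ rather than equality.

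For the second inequality I would invoke the remark made earlier in this section that every element of $D_m$ has $m$-adic norm at most $1$: since the denominator $b$ is coprime to $m$, no cancellation can introduce a negative power of $m$, so $|x|_m = m^{-k_a} \leq 1$. Multiplying through by the nonnegative quantity $|y|_m$ then gives $|x|_m |y|_m \leq |y|_m$, which completes the proof.

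I do not expect any real obstacle here; the lemma is essentially a bookkeeping exercise about exponents of $m$. The only point that requires a moment's care is resisting the temptation to claim equality in the first inequality, since for composite $m$ a product of two $m$-adic units can itself be divisible by $m$.
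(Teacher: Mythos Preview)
Your proof is correct and follows essentially the same route as the paper's: write $x=a/b$, $y=c/d$, use that $m^{k_a+k_c}\mid ac$ to get $|xy|_m\le m^{-(k_a+k_c)}=|x|_m|y|_m$, and then invoke $|x|_m\le 1$ for the second inequality. Your side remark that the first inequality can be strict when $m$ is composite is a nice addition not present in the paper, but the underlying argument is the same.
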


\begin{proof} Let $x = \frac{a}{b}$ and let $y = \frac{c}{d}$.  Let $|x|_m = m^{-k}$ and $|y|_m = m^{-l}$.  Then $a = em^k$ and $c = fm^l$ for some integers $e$ and $f$.  Thus $ac = efm^{k + l}$; hence $m^{k+l} \mid ac$.  It follows that $|ac|_m \leq m^{-(k+l)} = |x|_m|y|_m$; hence $|xy|_m \leq |x|_m|y|_m$.  This proves the left-hand inequality.

The right-hand inequality follows, since $|x|_m \leq 1$.  This proves the lemma. \end{proof}

\begin{proposition} \label{D_m topological} The ring $D_m$ is a topological ring. \end{proposition}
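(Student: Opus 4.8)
The plan is to verify the three continuity conditions defining a topological ring — that addition $D_m \times D_m \to D_m$, negation $D_m \to D_m$, and multiplication $D_m \times D_m \to D_m$ are all continuous, where $D_m \times D_m$ carries the product topology induced by $d$. Since every space in sight is metric, it suffices to argue with $\varepsilon$--$\delta$, controlling a pair $(x,y)$ by making $d(x,x_0)$ and $d(y,y_0)$ small. Before doing this I would record two elementary facts. First, $d$ is translation invariant: $d(x+z,y+z) = |x-y|_m = d(x,y)$. Second, $|\cdot|_m$ is non-archimedean, i.e. $|u+v|_m \le \max\{|u|_m,|v|_m\}$; this is exactly the computation already carried out in the proof of Proposition \ref{metric} (the greatest power of $m$ dividing a numerator sum is at least the smaller of the two exponents), and it immediately yields the strong triangle inequality $d(x,z) \le \max\{d(x,y),d(y,z)\}$.

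For addition and negation: negation is an isometry, since $d(-x,-x_0) = |x_0 - x|_m = d(x,x_0)$, hence continuous. For addition, fix $(x_0,y_0)$ and $\varepsilon > 0$ and take $\delta = \varepsilon$. If $d(x,x_0) < \delta$ and $d(y,y_0) < \delta$, then by the non-archimedean property $d(x+y,\,x_0+y_0) = |(x-x_0)+(y-y_0)|_m \le \max\{|x-x_0|_m,|y-y_0|_m\} < \varepsilon$. So addition is (uniformly) continuous.

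For multiplication, fix $(x_0,y_0)$ and $\varepsilon > 0$ and again take $\delta = \varepsilon$. Writing $xy - x_0y_0 = x(y-y_0) + (x-x_0)y_0$ and applying the non-archimedean property gives $|xy - x_0y_0|_m \le \max\{|x(y-y_0)|_m,\,|(x-x_0)y_0|_m\}$. Now Lemma \ref{mult-lemma} bounds the first term by $|x|_m|y-y_0|_m \le |y-y_0|_m$ and the second by $|x-x_0|_m|y_0|_m \le |x-x_0|_m$, using that $|\cdot|_m \le 1$ everywhere on $D_m$. Hence $d(xy,x_0y_0) \le \max\{d(y,y_0),d(x,x_0)\} < \varepsilon$, so multiplication is continuous, and therefore $D_m$ is a topological ring.

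I do not expect a genuine obstacle here; the whole argument hinges on two features of the $m$-adic norm on $D_m$ — that it is non-archimedean and that it is bounded by $1$ (the right-hand inequality in Lemma \ref{mult-lemma}) — which together let one take the crude choice $\delta = \varepsilon$ uniformly in the base point, avoiding the product-rule cross-term estimates needed in the archimedean setting. The only points requiring a little care are to invoke the product-topology description of continuity on $D_m \times D_m$ and to recall that closure of $D_m$ under these operations, established at the start of the section, is what makes the target of each map actually land in $D_m$.
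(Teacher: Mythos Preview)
Your proof is correct and follows essentially the same route as the paper: both establish continuity of addition via the triangle inequality and continuity of multiplication via the decomposition $xy - x_0y_0 = x(y-y_0) + (x-x_0)y_0$ together with Lemma~\ref{mult-lemma}. The only cosmetic differences are that the paper argues with sequences rather than $\varepsilon$--$\delta$, uses the ordinary (not the ultrametric) triangle inequality with an $\varepsilon/2$ split, and does not separately treat negation; your use of the non-archimedean inequality and the bound $|\cdot|_m \le 1$ yields the slightly sharper conclusion that all three operations are uniformly continuous with $\delta = \varepsilon$.
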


\begin{proof} 
First we show that the addition operation is continuous.  Suppose $(x,y)$ is a point in $D_m \times D_m$ and that $\{(x_n,y_n)\}_{n=1}^\infty$ is a sequence converging to $(x,y)$.  Let $\epsilon > 0$ be given.  Then there exists a positive integer $K$ such that whenever $n \geq K$, then $|x_n - x|_m < \epsilon/2$ and $|y_n - y|_m < \epsilon/2$.  Since the triangle inequality holds, whenever $n \geq K$ we have
\begin{align*}
|(x_n + y_n) - (x + y)|_m &\leq |(x_n + y_n) - (x + y_n)|_m + |(x + y_n) - (x + y)|_m\\
&= |x_n - x|_m + |y_n - y|_m < \epsilon/2 + \epsilon/2 < \epsilon.
\end{align*}
Therefore, $x_n + y_n \rightarrow x + y$.  It follows that addition is continuous.

Now using Lemma \ref{mult-lemma}, whenever $n \geq K$ we also have
\begin{align*}
|x_n y_n - xy|_m &\leq |x_n y_n - xy_n|_m + |xy_n - xy|_m\\
&\leq |x_n - x|_m + |y_n - y|_m < \epsilon/2 + \epsilon/2 = \epsilon.
\end{align*}

Therefore, $x_n y_n \rightarrow xy$.  It follows that multiplication is continuous.  Since addition and multiplication are both continuous from $D_m \times D_m$ to $D_m$, it follows that $D_m$ is a topological ring. 
\end{proof}

The next fact we will show is that $D_m$ is not complete.  That is, there must be Cauchy sequences in $D_m$ such that fail to converge.  First, observe that any sequence of the form $\{\sum_{k=1}^n m^k s_k\}_{n=1}^\infty$ in $D_m$ is a Cauchy sequence.  Now the following lemma and its corollary will establish that $D_m$ is not complete.

\begin{lemma} \label{0 series} Let $\{s_k\}_{k=0}^\infty$ be a sequence of least nonnegative residues modulo $m$.  Then the series $\sum_{k=0}^\infty m^k s_k$ converges to 0 in $D_m$ if and only if $s_0 = s_1 = s_2 = \cdots = 0$. \end{lemma}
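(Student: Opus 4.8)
The plan is to prove both directions, with the nontrivial direction being the forward one. The reverse direction is immediate: if all $s_k = 0$, then every partial sum is $0$, so the series converges to $0$. For the forward direction, suppose $\sum_{k=0}^\infty m^k s_k$ converges to $0$ in $D_m$ but not all $s_k$ vanish; I would derive a contradiction. Let $j$ be the smallest index with $s_j \neq 0$, so $0 \le s_j \le m-1$ with $s_j$ not divisible by $m$ (since $1 \le s_j \le m-1$). I then examine the partial sums $P_n = \sum_{k=0}^n m^k s_k = m^j\left(s_j + \sum_{k=j+1}^n m^{k-j} s_k\right)$. For $n \ge j$, the quantity in parentheses is an integer congruent to $s_j \bmod m$, hence not divisible by $m$, so $|P_n|_m = m^{-j}$ exactly.

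The crux is then to show that convergence to $0$ forces the norms $|P_n|_m$ to tend to $0$, contradicting $|P_n|_m = m^{-j}$ for all large $n$. This follows from the ultrametric (strong triangle) inequality established in the proof of Proposition \ref{metric}: if $P_n \to 0$, then for $n$ large we have $|P_n - 0|_m = |P_n|_m < m^{-j}$, which is the contradiction. Concretely, once $d(P_n, 0) < m^{-j}$ for all $n \ge K$, we have $|P_n|_m < m^{-j}$; but we computed $|P_n|_m = m^{-j}$ for every $n \ge j$. Taking any $n \ge \max\{j, K\}$ yields $m^{-j} < m^{-j}$, absurd.

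I expect the main (though still modest) obstacle to be bookkeeping: one must be careful that $m$ itself need not be prime, so "not divisible by $m$" is the right condition rather than anything about prime factorizations, and the norm $|\cdot|_m$ counts the largest power of $m$ dividing the numerator of a representative in lowest terms. The key point making the argument go through cleanly is precisely that $P_n$, being an integer, has $|P_n|_m$ determined directly by divisibility of $P_n$ by powers of $m$, and the factorization $P_n = m^j \cdot (\text{unit mod } m)$ pins this down. Everything else is a routine application of the definition of convergence in the metric $d$ together with the norm computation.
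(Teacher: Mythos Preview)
Your proposal is correct and follows essentially the same approach as the paper: both argue by contradiction, taking the least index $j$ (the paper calls it $p$) with $s_j \neq 0$ and showing that every partial sum $P_n$ with $n \ge j$ has $m$-adic norm exactly $m^{-j}$, which contradicts $P_n \to 0$. Your version is slightly cleaner in that you compute $|P_n|_m = m^{-j}$ directly from the factorization $P_n = m^j(s_j + m(\cdots))$, whereas the paper phrases the same step as a divisibility argument; also, your appeal to the ultrametric inequality is unnecessary---only the definition of convergence is used.
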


\begin{proof} Suppose $\sum_{k=0}^\infty m^k s_k = 0$. Suppose, in order to obtain a contradiction,  that there are non-zero elements in $\{s_k\}$.  Let $p$ be the smallest integer such that $s_p \neq 0$.  Since $\sum_{k=0}^\infty m^k s_k = 0$, there exists some $N_1$ such that whenever $n \geq N_1$, $\left| \sum_{k=0}^n m^k s_k \right|_m < m^{-p}$.  Let $N = \min\{p, N_1\}$.  Now \begin{equation*}\left| \sum_{k=0}^N m^k s_k \right|_m = \left|\sum_{k=p}^N m^k s_k \right|_m < m^{-p},\end{equation*} which implies that $m^{p+1}$ must divide $\sum_{k=p}^N m^k s_k$.  Since $m^{p+1}$ divides $\sum_{k=p+1}^N m^k s_k$, it must also divide $m^p s_p$.  This implies that $m$ divides $s_p$, which is a contradiction, for $0 < s_p < m$.  Therefore, there is no non-zero element in $\{s_k\}$.  This proves that $s_0 = s_1 = s_2 = \cdots = 0$.

The other direction of the proof is trivial. \end{proof}

\begin{corollary} \label{equal series} Let $\{s_k\}_{k=0}^\infty$ and $\{t_k\}_{k=0}^\infty$ be sequences of least nonnegative residues mod $m$.  Suppose $\sum_{k=0}^\infty m^k s_k$ and $\sum_{k=0}^\infty m^k t_k$ each converge to the same limit.  Then $s_k = t_k$ for every $k$.
\end{corollary}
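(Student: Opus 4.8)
The plan is to subtract the two series and then run essentially the argument already used for Lemma~\ref{0 series}; the only subtlety is that the coefficient differences $s_k - t_k$ need not be least nonnegative residues modulo $m$.

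Write $A_n = \sum_{k=0}^n m^k s_k$ and $B_n = \sum_{k=0}^n m^k t_k$, and let $L$ denote their common limit. First I would note, using only the triangle inequality for $|\cdot|_m$, that
\[
|A_n - B_n|_m = |(A_n - L) - (B_n - L)|_m \leq |A_n - L|_m + |B_n - L|_m \longrightarrow 0 ,
\]
so the integer partial sums $A_n - B_n = \sum_{k=0}^n m^k (s_k - t_k)$ tend to $0$ in $D_m$. Now suppose, to get a contradiction, that $s_k \neq t_k$ for some $k$, and let $p$ be the least such index; set $d_k = s_k - t_k$, so that $d_k = 0$ for $k < p$ while $1 \leq |d_p| \leq m - 1$, and in particular $m \nmid d_p$. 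From $\sum_{k=0}^n m^k d_k \to 0$ there is an $N_1$ with $\bigl| \sum_{k=0}^n m^k d_k \bigr|_m < m^{-p}$ whenever $n \geq N_1$. Choosing $N$ at least as large as both $p$ and $N_1$, and using $d_k = 0$ for $k < p$, this reads $\bigl| \sum_{k=p}^N m^k d_k \bigr|_m < m^{-p}$, so $m^{p+1}$ divides $\sum_{k=p}^N m^k d_k$. Since $m^{p+1}$ divides every term $m^k d_k$ with $k \geq p + 1$, it must divide the remaining term $m^p d_p$, forcing $m \mid d_p$ and contradicting $m \nmid d_p$. Hence $s_k = t_k$ for all $k$.

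The one place that requires thought is exactly this closing step: Lemma~\ref{0 series} cannot be invoked verbatim, because $d_p = s_p - t_p$ can be negative and, either way, need not be a least nonnegative residue. What makes the argument go through regardless is simply the bound $|d_p| < m$, which still guarantees that a nonzero $d_p$ is not divisible by $m$. Once that observation is in place, the rest of the proof is a routine re-run of the proof of Lemma~\ref{0 series}, and the corollary follows.
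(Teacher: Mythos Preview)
Your proof is correct and follows essentially the same route as the paper: subtract the two series and apply the argument of Lemma~\ref{0 series} to the difference. You are in fact more careful than the paper here, since the paper simply invokes Lemma~\ref{0 series} on $\sum m^k(s_k - t_k)$ without noting that $s_k - t_k$ need not be a least nonnegative residue; your observation that only $|d_p| < m$ (hence $m \nmid d_p$) is needed is exactly the patch that makes the citation legitimate.
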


\begin{proof} The hypotheses together imply that $\sum_{k=0}^\infty m^k (s_k - t_k) = 0$, so by Lemma \ref{0 series}, $s_k - t_k = 0$ for every $k$. \end{proof}

\begin{proposition} \label{not complete} The ring $D_m$ is not complete. \end{proposition}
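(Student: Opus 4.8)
The plan is a cardinality argument by contradiction. Assume $D_m$ is complete. (Throughout I take $m \ge 2$; when $m = 1$ the $m$-adic ``norm'' degenerates, so that case is implicitly excluded from this discussion.) The raw material is the family of series $\sum_{k=0}^\infty m^k s_k$ with each $s_k$ a least nonnegative residue modulo $m$: the partial sums $S_n = \sum_{k=0}^n m^k s_k$ all lie in $D_m$, and by exactly the estimate recorded just before Lemma \ref{0 series} the sequence $\{S_n\}$ is Cauchy, since $|S_{n'} - S_n|_m \le m^{-(n+1)}$ whenever $n' > n$. Under the completeness assumption, then, every such series converges to some element of $D_m$.

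Next I would let $\mathcal{S}$ be the set of all sequences $\{s_k\}_{k=0}^\infty$ with $s_k \in \{0,1,\ldots,m-1\}$; since $m \ge 2$, $\mathcal{S}$ is uncountable. Define $L : \mathcal{S} \to D_m$ by letting $L(\{s_k\})$ be the limit of the partial sums of $\sum_{k=0}^\infty m^k s_k$, which exists in $D_m$ by the previous paragraph. The key point is that $L$ is injective: if $\{s_k\} \neq \{t_k\}$ but $L(\{s_k\}) = L(\{t_k\})$, then $\sum_k m^k s_k$ and $\sum_k m^k t_k$ converge to the same limit, so Corollary \ref{equal series} forces $s_k = t_k$ for every $k$, a contradiction. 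But $D_m$ is a subset of $\mathbb{Q}$ and hence countable, so there can be no injection from the uncountable set $\mathcal{S}$ into $D_m$. This contradicts the completeness assumption, and therefore $D_m$ is not complete.

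There is no genuinely hard step here: once Corollary \ref{equal series} is in hand, all that remains is to check that the partial sums are Cauchy (immediate) and that $\mathcal{S}$ is uncountable while $D_m$ is countable (standard), so the ``main obstacle'' is really just assembling these pieces correctly while keeping the standing hypothesis $m \ge 2$ in view. As a more constructive alternative—which I would mention but not carry out—one can instead fix a single non-eventually-periodic sequence $\{s_k\}$ (available since $m \ge 2$) and show directly that $\sum_k m^k s_k$ has no limit in $D_m$: if the limit were some $a/b \in D_m$, the ultrametric inequality forces $m^{n+1} \mid a - bS_n$ for every $n$, which pins the digits down through a division-with-remainder recursion $r_{n+1} = (r_n - s_{n+1})/m$ whose iterates are rationals with the fixed denominator $b$ and bounded numerators, so by pigeonhole they repeat and $\{s_k\}$ must be eventually periodic—a contradiction. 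The cardinality proof is shorter, and concrete limits for these series will in any case be furnished once $\mathbb{Q}_m$ is constructed in the next section.
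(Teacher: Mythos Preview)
Your proof is correct and follows essentially the same cardinality argument as the paper: assume completeness, note that the Cauchy series $\sum m^k s_k$ (for digit sequences $\{s_k\}$) would then converge in $D_m$, invoke Corollary \ref{equal series} to get injectivity of the sequence-to-limit map, and derive a contradiction with the countability of $D_m \subset \mathbb{Q}$. Your explicit attention to the standing hypothesis $m \ge 2$ and the sketched alternative via eventual periodicity are pleasant additions, but the core argument is the paper's own.
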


\begin{proof} Suppose, in order to obtain a contradiction,  that every Cauchy sequence in $D_m$ converges.  Let $S$ be the set of all series in $D_m$ of the form $\sum_{k=0}^\infty m^k s_k$, where $\{s_k\}_{k=0}^\infty$ is a sequence of least nonnegative residues mod $m$.  Note that every series in $S$ is Cauchy, there are uncountably many distinct series in $S$, and Lemma \ref{equal series} implies that each one converges to a different limit.  Thus there are uncountably many points in $D_m$.  But $D_m$ is a subset of the rational numbers, so $D_m$ is countable.  This is a contradiction.  Hence $D_m$ is not complete. \end{proof}

\textbf{Example.}  It is a little more satisfying to see a concrete example of a Cauchy sequence in $D_m$ that fails to converge.  We now produce such a sequence.  Suppose $m = 3$.  Let $\sigma_1 = 2$ and for each positive integer $n$, let $\sigma_{n+1} = 7 - \sigma_n^2 + \sigma_n$.  The resulting sequence $\left\{\sigma_n\right\}_{n=1}^\infty$ is Cauchy, but it fails to converge.  Here are the first few terms:

\[2, 5, -13, -175, -30793, -948239635, -899158406333172853, \ldots \]

Under our metric in $D_m$, it is not immediately obvious why this does not converge.  In fact, we will show that the sequence of the squares of these terms, $\left\{\sigma_n^2\right\}_{n=1}^\infty$, which is even more divergent in the real number sense, does converge:

\begin{multline*}
4, 25, 169, 30625, 948208849, 899158405384933225, 808485839679611178962908785976159609, \ldots
\end{multline*}

The terms in this sequence diverge very quickly in the normal sense, but in $D_3$ they actually converge to 7.  One can see this by subtracting 7 from each term and factoring out 3's.  We can use the convergence of this sequence to show that $\left\{\sigma_n\right\}_{n=1}^\infty$ does not converge.  First, suppose it did converge; call the limit $\sigma$.  Then since multiplication is continuous in $D_3$, we have $\sigma^2 = 7$.  But since $\sigma$ is an element of $D_3$, it is expressible as a rational number.  Thus $\sigma$ is a rational number with $\sigma^2 = 7$.  This is a contradiction.  It follows that $\left\{\sigma_n\right\}_{n=1}^\infty$ does not converge in $D_3$.

We have not yet proved that $\left\{\sigma_n\right\}_{n=1}^\infty$ is Cauchy and that $\left\{\sigma_n^2\right\}_{n=1}^\infty$ actually converges.  We will prove a more general proposition instead.  We will use the following two lemmas frequently, but we omit proofs because they are trivial:

\begin{lemma} \label{decreasing-equiv} Suppose $k \leq n$.  If $x \equiv y \bmod{m^n}$, then $x \equiv y \bmod{m^k}$. \end{lemma}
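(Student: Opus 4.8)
The plan is to unwind the definition of congruence modulo a prime power and then invoke transitivity of divisibility. First I would observe that the hypothesis $x \equiv y \bmod{m^n}$ means, by definition, that $m^n$ divides $x - y$. Next, since $k \leq n$, the exponent $n - k$ is a non-negative integer, so the factorization $m^n = m^k \cdot m^{\,n-k}$ exhibits $m^k$ as a divisor of $m^n$. Finally, because the divisibility relation on $\mathbb{Z}$ is transitive, from $m^k \mid m^n$ and $m^n \mid (x - y)$ we conclude $m^k \mid (x - y)$, which is precisely the assertion $x \equiv y \bmod{m^k}$.

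There is essentially no obstacle here. The single point that needs the hypothesis is that $k \leq n$ is exactly what makes $m^{\,n-k}$ an honest integer factor of $m^n$ (rather than a fraction), so that the chain $m^k \mid m^n \mid (x-y)$ is legitimate; if one tried to run the argument without that inequality it would fail at this step. This is why the authors classify the statement as trivial and suppress the proof, and I would follow suit, recording at most the one-line chain of divisibilities above.
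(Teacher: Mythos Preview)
Your argument is correct and is exactly the obvious divisibility chain one would write; the paper itself omits the proof as trivial, so there is nothing to compare against beyond noting that your expansion is the intended one. One small wording slip: you refer to ``congruence modulo a prime power,'' but in this paper $m$ need not be prime---this is immaterial to your argument, which only uses that $m^{n-k}$ is an integer when $k \leq n$.
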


\begin{lemma} \label{increasing-equiv} If $x \equiv y \bmod{m^n}$, and $z \equiv 0 \bmod{m^k}$, then $xz \equiv yz \bmod{m^{n+k}}$. \end{lemma}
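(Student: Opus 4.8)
The plan is to translate both hypotheses into divisibility statements, multiply them together, and read off the conclusion directly; this lemma is genuinely routine, which is why the paper declines to write the proof out.

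First I would unpack the definition of congruence modulo a prime power. The hypothesis $x \equiv y \bmod{m^n}$ means exactly that $m^n \mid (x - y)$, so there is an integer $a$ with $x - y = m^n a$. Likewise $z \equiv 0 \bmod{m^k}$ means $m^k \mid z$, so there is an integer $b$ with $z = m^k b$.

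Next I would compute $xz - yz = (x - y)z = (m^n a)(m^k b) = m^{n+k}(ab)$. Since $ab$ is an integer, this exhibits $m^{n+k}$ as a divisor of $xz - yz$, which is precisely the assertion $xz \equiv yz \bmod{m^{n+k}}$, completing the proof.

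The only point demanding any attention is the bookkeeping that $a$ and $b$ are honest integers and that the exponents combine as $n + k$ rather than, say, $\max\{n,k\}$; beyond that there is no obstacle at all. In the body of the paper this lemma will be used to propagate congruences through the recursive definition of the $y_n(x,r)$ and through partial sums of the form $\sum_k m^k s_k$, where one repeatedly needs to upgrade the modulus after multiplying by a factor carrying a known power of $m$.
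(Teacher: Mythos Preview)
Your argument is correct and is exactly the standard one-line proof the paper has in mind when it says the lemma is trivial and omits the proof; there is nothing to compare since the paper gives no proof at all. One small wording slip: you call $m^n$ a ``prime power,'' but in this paper $m$ is an arbitrary positive integer, so just say ``power of $m$''---your actual computation nowhere uses primality, so the proof is unaffected.
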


We will also use the following lemma in order to define a sort of ``square root sequence" in $D_m$.

\begin{lemma} Suppose that $c$ is relatively prime to $m$ and that $\sigma_1^2 \equiv c \bmod{m}$.  Then $\sigma_1$ is relatively prime to $m$; hence $\sigma_1$ has an inverse modulo $m$. \end{lemma}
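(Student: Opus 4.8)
The plan is to argue by a short divisibility chain. Let $g = \gcd(\sigma_1, m)$, and I will show that $g$ must divide $c$; since $\gcd(c,m) = 1$, this forces $g = 1$, which is exactly the assertion that $\sigma_1$ is relatively prime to $m$.

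First, from $g \mid \sigma_1$ I get $g \mid \sigma_1^2$. Next, the congruence $\sigma_1^2 \equiv c \bmod{m}$ says that $m$ divides $\sigma_1^2 - c$, and since $g \mid m$ this yields $g \mid \sigma_1^2 - c$. Combining $g \mid \sigma_1^2$ with $g \mid \sigma_1^2 - c$ gives $g \mid c$. Hence $g$ is a common divisor of $c$ and $m$, so $g \mid \gcd(c,m) = 1$, and therefore $g = 1$. Equivalently, if one prefers the contrapositive phrased with primes: any prime $p$ dividing both $\sigma_1$ and $m$ would divide $\sigma_1^2$ and hence $c$, contradicting $\gcd(c,m)=1$.

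Finally, to see that $\sigma_1$ has an inverse modulo $m$, note that $\gcd(\sigma_1,m) = 1$ lets us invoke B\'ezout's identity: there are integers $u$ and $v$ with $u\sigma_1 + vm = 1$, so $u\sigma_1 \equiv 1 \bmod{m}$, and $u$ is the desired inverse.

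There is essentially no obstacle here; the one point worth stating explicitly is that $g \mid m$ is precisely what allows us to reduce the hypothesized congruence modulo $g$, and the remainder is the standard characterization of the units of $\mathbb{Z}/m\mathbb{Z}$.
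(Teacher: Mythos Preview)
Your proof is correct. It differs slightly in structure from the paper's argument: the paper starts from a B\'ezout relation $sc + tm = 1$ for $c$ and $m$, substitutes $\sigma_1^2 - c = km$, and manipulates to obtain an explicit relation $s\sigma_1^2 + (t - sk)m = 1$, from which $\gcd(\sigma_1,m)=1$ follows. You instead work directly with $g = \gcd(\sigma_1,m)$ and use the divisibility chain $g \mid \sigma_1^2$ and $g \mid m \mid (\sigma_1^2 - c)$ to force $g \mid c$, hence $g \mid \gcd(c,m)=1$. Your route is arguably a bit cleaner and more conceptual; the paper's is more constructive in that it exhibits the B\'ezout coefficients explicitly. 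Both are standard and equally valid.
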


\begin{proof} Since $c$ is relatively prime to $m$, there exist integers $s$ and $t$ such that $sc + tm = 1$.  Since $\sigma_1^2 \equiv c \bmod{m}$, there exists some integer $k$ such that $\sigma_1^2 - c = km$.  Multiply both sides of this equation by $s$ to get $s\sigma_1^2 - sc = skm$, and substitute to get $s\sigma_1^2 - (1 - tm) = skm$.  Now rearrange to get $s\sigma_1^2 + (t-sk)m = 1$.  Thus $\sigma_1$ and $m$ are relatively prime. \end{proof}

Let us now define fairly general sequence in $D_m$ that is Cauchy but may not necessarily converge.

\begin{definition} Suppose that $m$ is odd and that $c$ is a number relatively prime to $m$ such that $x^2 \equiv c \bmod{m}$ has a solution $\sigma_1$.  For each positive integer $n$, let $\sigma_{n+1} = (c - \sigma_n^2)(2\sigma_1)^{-1} + \sigma_n$, where $(2\sigma_1)^{-1}$ means the inverse of $2\sigma_1$ modulo $m$.  \end{definition}

We will prove two propositions about this sequence.  First, we will show that it is a sort of ``square root sequence" in $D_m$.  Then, we will prove that the sequence is Cauchy.

\begin{proposition} \label{c^2} For every positive integer $n$, $\sigma_n^2 \equiv c \bmod{m^n}$.  Therefore, $\left\{\sigma_n^2\right\}_{n=1}^\infty$ converges to $c$.  \end{proposition}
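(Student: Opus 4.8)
The plan is to prove the single congruence
\[
\sigma_n^2 \equiv c \bmod{m^n}
\]
by induction on $n$, in the style of Newton's method (Hensel lifting); the convergence claim then drops out for free. The base case $n = 1$ is precisely the hypothesis $\sigma_1^2 \equiv c \bmod m$. Note first that every $\sigma_n$ is an integer, since $\sigma_1 \in \mathbb{Z}$, the quantity $(2\sigma_1)^{-1}$ is an integer representative of the inverse of $2\sigma_1$ modulo $m$ (which exists because $m$ is odd and, by the preceding lemma, $\sigma_1$ is prime to $m$), and the recursion $\sigma_{n+1} = \sigma_n + (c-\sigma_n^2)(2\sigma_1)^{-1}$ stays within $\mathbb{Z}$; hence each $\sigma_n^2$ lies in $D_m$ and the statement makes sense.

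For the inductive step, assume $\sigma_n^2 \equiv c \bmod{m^n}$, i.e. $m^n \mid c - \sigma_n^2$. Squaring the recursion gives
\[
\sigma_{n+1}^2 = \sigma_n^2 + 2\sigma_n(c-\sigma_n^2)(2\sigma_1)^{-1} + (c-\sigma_n^2)^2\big((2\sigma_1)^{-1}\big)^2 .
\]
The last term is divisible by $m^{2n}$, and since $2n \geq n+1$ it vanishes modulo $m^{n+1}$ by Lemma \ref{decreasing-equiv}. So everything comes down to the cross term: I need to show $2\sigma_n(c-\sigma_n^2)(2\sigma_1)^{-1} \equiv c - \sigma_n^2 \bmod{m^{n+1}}$, for then $\sigma_{n+1}^2 \equiv \sigma_n^2 + (c-\sigma_n^2) = c \bmod{m^{n+1}}$, completing the induction.

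Handling that cross term is the main obstacle, and the key sub-observation is that $\sigma_n \equiv \sigma_1 \bmod m$ for all $n$. This follows by a short auxiliary induction: because $m^n \mid c - \sigma_n^2$ with $n \geq 1$, the recursion gives $\sigma_{n+1} \equiv \sigma_n \bmod m$, so all the $\sigma_n$ are congruent modulo $m$. Consequently $2\sigma_n(2\sigma_1)^{-1} \equiv 2\sigma_1(2\sigma_1)^{-1} \equiv 1 \bmod m$, that is, $2\sigma_n(2\sigma_1)^{-1} - 1 \equiv 0 \bmod m$. Multiplying this congruence by $c - \sigma_n^2 \equiv 0 \bmod{m^n}$ and applying Lemma \ref{increasing-equiv} yields $\big(2\sigma_n(2\sigma_1)^{-1} - 1\big)(c-\sigma_n^2) \equiv 0 \bmod{m^{n+1}}$, which is exactly the statement that the cross term is congruent to $c - \sigma_n^2$ modulo $m^{n+1}$.

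Finally, convergence: $\sigma_n^2 \equiv c \bmod{m^n}$ means $m^n \mid \sigma_n^2 - c$, so $|\sigma_n^2 - c|_m \leq m^{-n}$, hence $d(\sigma_n^2, c) \to 0$ as $n \to \infty$; therefore $\{\sigma_n^2\}_{n=1}^\infty$ converges to $c$ in $D_m$.
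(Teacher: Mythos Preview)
Your proof is correct and follows essentially the same Hensel-lifting approach as the paper: square the recursion, drop the $(c-\sigma_n^2)^2$ term since $2n\ge n+1$, and handle the cross term via the observation $\sigma_n\equiv\sigma_1\bmod m$. The only cosmetic difference is that the paper frames the main argument as strong induction (so that $\sigma_k^2\equiv c\bmod{m^k}$ for all $k\le n$ is available when establishing $\sigma_n\equiv\sigma_1\bmod m$), whereas you extract that congruence as a separate auxiliary induction; your phrasing ``because $m^n\mid c-\sigma_n^2$'' momentarily blurs which hypothesis is in play, but the intended argument is clear and sound.
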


\begin{proof} To build a proof by strong induction, first note that $\sigma_1^2 \equiv c \bmod{m^1}$ is given; now suppose $c - \sigma_k^2 \equiv 0 \bmod{m^k}$ for $1 \leq k \leq n$.  It follows that $\sigma_{k+1} \equiv \sigma_k \bmod{m^k}$ for $1 \leq k < n$.  Thus $\sigma_n \equiv \sigma_1 \bmod{m}$, and so $2\sigma_1$ and $2\sigma_n$ must have the same inverse mod $m$, i.e. $(2\sigma_1)^{-1} = (2\sigma_n)^{-1}$.  Therefore,
\begin{align*}
\sigma_{n+1}^2 &= ((c - \sigma_n^2)(2\sigma_1)^{-1} + \sigma_n)^2\\
&= ((c - \sigma_n^2)(2\sigma_n)^{-1} + \sigma_n)^2\\
&= ((c - \sigma_n^2)(2\sigma_1)^{-1})^2 + 2(c - \sigma_n^2)(2\sigma_n)^{-1}\sigma_n + \sigma_n^2\\
&\equiv (c - \sigma_n^2)(2\sigma_n)^{-1}(2\sigma_n) + \sigma_n^2 \bmod{m^{n+1}}.
\end{align*}
The squared term drops out because, as $c - \sigma_n^2 \equiv 0 \bmod{m^n}$, it follows that $(c - \sigma_n^2)^2 \equiv 0 \bmod{m^{2n}}$; hence $(c - \sigma_n^2)^2 \equiv 0 \bmod{m^{n+1}}$.  Now $(2\sigma_n)^{-1}(2\sigma_n) \equiv 1 \bmod{m}$, and since $c - \sigma_n^2 \equiv 0 \bmod{m^n}$, it follows that $(c - \sigma_n^2)(2\sigma_n)^{-1}(2\sigma_n) \equiv (c - \sigma_n^2) \bmod{m^{n+1}}$.  Therefore, we get $\sigma_{n+1}^2 \equiv (c - \sigma_n^2) + \sigma_n^2 = c \bmod{m^{n+1}}$.  By strong induction, $\sigma_n^2 \equiv c \bmod{m^n}$ for every positive integer $n$. \end{proof}

\begin{proposition} \label{cauchyness} The sequence $\{\sigma_n\}_{n=1}^\infty$ in $D_m$ is Cauchy. \end{proposition}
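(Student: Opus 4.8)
The plan is to show that consecutive terms become $m$-adically close very quickly, and then control $|\sigma_p - \sigma_q|_m$ for all $p > q$ by telescoping together with the non-archimedean nature of the $m$-adic norm.

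First I would establish the key estimate $|\sigma_{n+1} - \sigma_n|_m \le m^{-n}$. By definition $\sigma_{n+1} - \sigma_n = (c - \sigma_n^2)(2\sigma_1)^{-1}$; since $m$ is odd and $\gcd(\sigma_1,m)=1$, the factor $(2\sigma_1)^{-1}$ is an honest integer, and an easy induction then shows each $\sigma_n$ is an integer, so these manipulations live inside $D_m$. Proposition \ref{c^2} gives $\sigma_n^2 \equiv c \bmod{m^n}$, hence $m^n \mid (c - \sigma_n^2)$ and therefore $m^n \mid (\sigma_{n+1} - \sigma_n)$, which is exactly $|\sigma_{n+1} - \sigma_n|_m \le m^{-n}$.

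Next I would record that $|\cdot|_m$ is non-archimedean: for integers $a_1,\dots,a_k$, the largest power of $m$ dividing $a_1 + \cdots + a_k$ is at least the minimum of the largest powers dividing the individual $a_i$, so $|a_1 + \cdots + a_k|_m \le \max_i |a_i|_m$ (this is the same divisibility fact that drove the triangle inequality in Proposition \ref{metric}). Then for $p > q$ I would telescope, writing $\sigma_p - \sigma_q = \sum_{k=q}^{p-1}(\sigma_{k+1}-\sigma_k)$, and combine with the estimate above to get
\[
|\sigma_p - \sigma_q|_m \;\le\; \max_{q \le k \le p-1} |\sigma_{k+1}-\sigma_k|_m \;\le\; m^{-q}.
\]
(Equivalently, even with only the ordinary triangle inequality one gets $|\sigma_p - \sigma_q|_m \le \sum_{k \ge q} m^{-k} = \tfrac{m^{-q+1}}{m-1}$, which works just as well.)

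Finally, given $\epsilon > 0$, I would pick $K$ with $m^{-K} < \epsilon$; then for all $p \ge q \ge K$ we have $|\sigma_p - \sigma_q|_m \le m^{-q} \le m^{-K} < \epsilon$, so $\{\sigma_n\}_{n=1}^\infty$ is Cauchy. I do not expect a real obstacle here; the one point that deserves attention is that a naive additive bound of the form $(p-q)m^{-q}$ would not by itself prove the Cauchy property, since the number of summands is unbounded — so one genuinely needs either the non-archimedean inequality or the geometric decay of the increments, both of which are immediate in this setting.
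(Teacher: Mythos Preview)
Your proof is correct and follows essentially the same route as the paper: both use Proposition~\ref{c^2} to get $m^n \mid (\sigma_{n+1}-\sigma_n)$, then telescope $\sigma_p - \sigma_q$ and observe that every summand is divisible by $m^q$, giving $|\sigma_p-\sigma_q|_m \le m^{-q}$. The paper phrases this last step directly in divisibility language rather than invoking the non-archimedean inequality by name, but the content is identical; your extra remarks (integrality of the $\sigma_n$, the alternative geometric-series bound, and the caveat about $(p-q)m^{-q}$) are correct observations that the paper simply leaves implicit.
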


\begin{proof}  Notice that Proposition \ref{c^2} implies $\sigma_{n+1} - \sigma_n \equiv 0 \bmod{m^n}$ for every positive integer $n$.  Therefore, if $n \geq k$, then \begin{equation}\sigma_n - \sigma_k = (\sigma_n - \sigma_{n-1}) + (\sigma_{n-1} - \sigma_{n-2}) + \cdots + (\sigma_{k+1} - \sigma_k)\end{equation} is divisible by $m^k$; hence $\left|\sigma_n - \sigma_k\right|_m \leq m^{-k}$.  It follows that $\{\sigma_n\}_{n=1}^\infty$ is Cauchy. \end{proof}

Now if $c$ does not have a real square root, then Proposition \ref{c^2} and Proposition \ref{cauchyness} imply that $\left\{\sigma_n\right\}_{n=1}^\infty$ does not converge $D_m$, because if $\sigma_n \rightarrow \sigma$, then $\sigma^2 = c$, which is impossible.  (In the case $m = 3$, $c = 7$, we used $\sigma_1 = 2$ to get $(2\sigma_1)^{-1} = 1$.)  This gives infinitely many examples of Cauchy sequences in $D_m$ (for odd values of $m$) that fail to converge.

As a concluding remark to this section, we now have all we need in order for the sum $\sum_{k=1}^\infty \left(\frac{m}{m+1}\right)^k r_k$ in Theorem \ref{m-adic sum} to make sense.  However, in $D_m$ we do not have the comfort of being able to say that every sum of that form actually converges.  In the next section, we explore the Cauchy completion of $D_m$ so that we can operate in a world where $\sum_{k=1}^\infty \left(\frac{m}{m+1}\right)^k r_k$ converges even when the coefficients $\{r_k\}$ are chosen arbitrarily, independent of their meaning in the triangle $T_m$.  This gives us a deeper background against which we may view the unique behavior of actual column tracking sequences $\{r_k(x,r)\}$.

\section{The Ring $\mathbb{Q}_m$}

\begin{definition} Let $\mathbb{Q}_m$ be the Cauchy completion of $D_m$. \end{definition}

In this section, we want to show two ways to represent an element in $\mathbb{Q}_m$.  First, we establish that everything in $\mathbb{Q}_m$ can be represented in a familiar ``$m$-adic" way, i.e. in the form $\sum_{k=0}^\infty m^k s_k$.  Then, we show that everything in $\mathbb{Q}_m$ can be represented in an almost ``base $\frac{m}{m+1}$" way, i.e. in the form $\sum_{k=0}^\infty \left(\frac{m}{m+1}\right)^k s_k$.  This latter form is key because it appears in Theorem \ref{m-adic sum}.

Let us proceed with the first task.  First we show that every element of $D_m$ can be written in the form $\sum_{k=0}^\infty m^k s_k$, and then we use this fact to show that every element of $\mathbb{Q}_m$ can be written in this form.

\begin{lemma} \label{sequence of equivs} Let $a_0 = a$ be an integer, and let $b$ be an integer relatively prime to $m$.  Let $\{a_n\}_{n=0}^\infty$ be a sequence of integers and let $\{s_n\}_{n=0}^\infty$ be a sequence of least nonnegative residues mod $m$, satisfying the following:

\begin{enumerate} 
\item $s_nb \equiv a_n \bmod{m}$ for every $n$, and
\item $a_n = \frac{a_{n-1} - s_{n-1}b}{m}$ for every $n \geq 1$. \end{enumerate} Then $a \equiv \sum_{k=0}^n m^{k}s_kb \bmod{m^{n+1}}$ for every positive integer $n$.

\end{lemma}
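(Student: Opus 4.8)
The plan is to prove, by induction on $n$, the stronger \emph{exact} identity
\begin{equation*}
a = \sum_{k=0}^{n} m^k s_k b + m^{n+1} a_{n+1}
\end{equation*}
for every non-negative integer $n$. The stated congruence is then immediate: since $\{a_n\}$ is a sequence of integers, $m^{n+1} a_{n+1} \equiv 0 \bmod{m^{n+1}}$, so reducing the identity modulo $m^{n+1}$ gives $a \equiv \sum_{k=0}^n m^k s_k b \bmod{m^{n+1}}$ for every $n \geq 1$ (indeed for $n \geq 0$).

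For the base case $n = 0$, condition (1) at index $0$ says $s_0 b \equiv a_0 = a \bmod{m}$, so $m$ divides $a - s_0 b$; then condition (2) at index $1$ reads $a_1 = (a - s_0 b)/m$, which rearranges to $a = s_0 b + m a_1$, exactly the identity for $n = 0$. For the inductive step, assume the identity holds for some $n \geq 0$. Applying condition (2) at index $n+2$ gives $a_{n+2} = (a_{n+1} - s_{n+1} b)/m$, i.e. $a_{n+1} = s_{n+1} b + m a_{n+2}$. Substituting this into the inductive hypothesis,
\begin{equation*}
a = \sum_{k=0}^{n} m^k s_k b + m^{n+1}\bigl(s_{n+1} b + m a_{n+2}\bigr) = \sum_{k=0}^{n+1} m^k s_k b + m^{n+2} a_{n+2},
\end{equation*}
which is the identity for $n+1$. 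This closes the induction.

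I do not expect a genuine obstacle here, since the argument is a routine telescoping induction. The only points requiring minor care are the index bookkeeping in condition (2)—which relates $a_n$ to $a_{n-1}$, so to extract $a_{n+1}$ in the inductive step one must invoke it at index $n+2$—and the observation that condition (1) is precisely what guarantees each quotient in condition (2) is an integer, in harmony with the hypothesis that $\{a_n\}$ consists of integers. One could alternatively induct directly on the congruence, but isolating the exact identity above makes the step transparent and avoids manipulating congruences of varying moduli.
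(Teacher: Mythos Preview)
Your proof is correct. The exact identity $a = \sum_{k=0}^{n} m^k s_k b + m^{n+1} a_{n+1}$ is precisely the telescoped form of the recursion in condition~(2), and the induction and index bookkeeping are handled cleanly.

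Your route differs from the paper's. The paper does not isolate an exact identity; instead it sets up a two-parameter family of congruences,
\[
a_{n-j} \equiv \sum_{k=n-j}^{n} m^{k-n+j} s_k b \pmod{m^{j+1}} \quad (0 \le j \le n),
\]
inducts on $j$ (with base case $j=0$ being condition~(1)), and only at the end specializes to $j=n$ to recover the statement. Your approach is more elementary: by tracking the exact integer remainder $m^{n+1}a_{n+1}$ you avoid manipulating congruences of shifting moduli and you eliminate the auxiliary parameter $j$ entirely. The paper's formulation, on the other hand, makes the role of condition~(1) more visible at the base step and yields intermediate congruences for all the $a_{n-j}$, not just $a_0$; this extra generality is not needed here, but the same scaffolding is reused verbatim in the later Lemma on the $(m/(m+1))^k$ expansions, which may explain the choice.
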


\begin{proof} We claim that for any nonnegative integer $j$, if $n \geq j$, then \begin{equation}a_{n-j} \equiv \sum_{k=n-j}^n m^{k-n+j}s_kb \bmod{m^{j+1}}.\end{equation}  To prove this claim by induction, note that the case $j = 0$ is true by part (1) above: $a_n \equiv s_nb \bmod{m}$.  Now suppose the claim holds for an arbitrary nonnegative integer $j$.  Since $a_{n-j} = \frac{a_{n-j-1} - s_{n-j-1}b}{m}$, we have $a_{n-j-1} = ma_{n-j} + s_{n-j-1}b$.  By the inductive hypothesis, we derive \begin{equation}ma_{n-j} \equiv \sum_{k=n-j}^n m^{k-n+j+1}s_kb \bmod{m^{j+2}},\end{equation} and so \begin{equation}a_{n-j-1} \equiv \sum_{k=n-j-1}^n m^{k-n+j+1}s_kb \bmod{m^{j+2}}.\end{equation}  The claim is thus proved by induction.  Now take $n = j$ to get $a_0 \equiv \sum_{k=0}^n m^{k}s_kb \bmod{m^{n+1}}$ for every positive integer $n$. \end{proof}

\begin{proposition} \label{m^k D sum} Every element of $D_m$ can be represented in the form $\sum_{k=0}^\infty m^k s_k$ for a unique sequence $\left\{s_k\right\}_{k=0}^\infty$ of least nonnegative residues mod $m$. \end{proposition}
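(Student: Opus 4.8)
The plan is to construct the digit sequence explicitly by a recursive (greedy) procedure and then invoke the machinery already in place, namely Lemma \ref{sequence of equivs} for existence and Corollary \ref{equal series} for uniqueness. Fix an element of $D_m$, written as $a/b$ with $\gcd(b,m) = 1$. Set $a_0 = a$. Since $b$ is invertible modulo $m$, there is a unique least nonnegative residue $s_0$ mod $m$ with $s_0 b \equiv a_0 \bmod m$; then $a_0 - s_0 b$ is divisible by $m$, so $a_1 := (a_0 - s_0 b)/m$ is an integer. Iterating, once $a_n$ is defined as an integer, let $s_n$ be the unique least nonnegative residue mod $m$ with $s_n b \equiv a_n \bmod m$, and set $a_{n+1} := (a_n - s_n b)/m \in \mathbb{Z}$. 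This produces sequences $\{a_n\}_{n=0}^\infty$ and $\{s_n\}_{n=0}^\infty$ satisfying exactly hypotheses (1) and (2) of Lemma \ref{sequence of equivs}.

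By Lemma \ref{sequence of equivs}, $a \equiv \sum_{k=0}^n m^k s_k b \bmod{m^{n+1}}$ for every positive integer $n$, so $a - b\sum_{k=0}^n m^k s_k = m^{n+1} c_n$ for some integer $c_n$. Dividing by $b$ gives $\frac{a}{b} - \sum_{k=0}^n m^k s_k = \frac{m^{n+1} c_n}{b}$, and since $\gcd(b,m) = 1$ we have $\left| \frac{m^{n+1} c_n}{b} \right|_m \leq m^{-(n+1)}$, using the fact that $|x/b|_m = |x|_m$. Hence the partial sums $\sum_{k=0}^n m^k s_k$ converge to $a/b$ in the $m$-adic metric on $D_m$, which establishes existence of a representation of the desired form.

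For uniqueness, suppose $\{s_k\}_{k=0}^\infty$ and $\{t_k\}_{k=0}^\infty$ are two sequences of least nonnegative residues mod $m$ with $\sum_{k=0}^\infty m^k s_k = \sum_{k=0}^\infty m^k t_k = a/b$. Then both series converge to the same limit, so Corollary \ref{equal series} gives $s_k = t_k$ for every $k$. Thus the representation is unique, completing the proof.

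I do not expect a serious obstacle here; the only point requiring a little care is the convergence estimate, specifically the observation that dividing the congruence coming from Lemma \ref{sequence of equivs} by $b$ does not damage the $m$-adic size of the error term. This is precisely where the defining condition $\gcd(b,m) = 1$ of $D_m$ is used, via $|a/b|_m = |a|_m$. Everything else is routine bookkeeping: verifying that the recursion keeps each $a_n$ integral (immediate from $s_n b \equiv a_n \bmod m$) and matching the construction to the hypotheses of Lemma \ref{sequence of equivs}.
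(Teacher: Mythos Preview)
Your proof is correct and follows essentially the same approach as the paper: construct the digit sequence via the recursion of Lemma~\ref{sequence of equivs}, deduce the congruence $a \equiv b\sum_{k=0}^n m^k s_k \bmod m^{n+1}$, translate it into the $m$-adic estimate $\left|a/b - \sum_{k=0}^n m^k s_k\right|_m \leq m^{-(n+1)}$, and appeal to Corollary~\ref{equal series} for uniqueness. You are simply more explicit than the paper about why the recursion is well-defined and why dividing by $b$ preserves the $m$-adic size of the error.
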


\begin{proof} Let $a/b$ be an element of $D_m$.  Then $b$ must be relatively prime to $m$.  Let $\{a_n\}_{n=0}^\infty$ and $\{s_n\}_{n=0}^\infty$ be defined as in Lemma \ref{sequence of equivs}.  Then $a \equiv \sum_{k=0}^n m^{k}s_kb$ mod $m^{n+1}$ for every positive integer $n$.  Thus $\left|a/b - \sum_{k=0}^n m^{k}s_k\right|_m < m^{-n}$ for every positive integer $n$.  It follows that $a/b = \sum_{k=0}^\infty m^k s_k$.  The uniqueness of $\left\{s_k\right\}_{k=0}^\infty$ is established by Corollary \ref{equal series}. \end{proof}

\begin{proposition} \label{m^k Q sum} Every element of $\mathbb{Q}_m$ can be represented in the form $\sum_{k=0}^\infty m^k s_k$ for a unique sequence $\left\{s_k\right\}_{k=0}^\infty$ of least nonnegative residues mod $m$. \end{proposition}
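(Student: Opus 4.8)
The plan is to bootstrap from Proposition \ref{m^k D sum}, which already supplies the representation for elements of $D_m$, and carry it through the completion. The engine of the whole argument is that the metric is non-archimedean: the proof of Proposition \ref{metric} in fact establishes the ultrametric inequality $d(x,z)\le\max\{d(x,y),d(y,z)\}$, which then holds on $\mathbb{Q}_m$ as well. Combined with the fact that $|\cdot|_m$ takes only the values $0$ and $m^{-k}$ (so strict and non-strict norm inequalities are interchangeable at these levels), this will let me show that the $m$-adic digits of a Cauchy sequence in $D_m$ must stabilize, and their stable values form the desired sequence $\{s_k\}$.

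The first step is a \emph{digit-matching lemma}: if $\sum_{k=0}^\infty m^k s_k$ and $\sum_{k=0}^\infty m^k t_k$ are convergent series in $\mathbb{Q}_m$ (with each $s_k,t_k$ a least nonnegative residue mod $m$) whose limits lie within $m$-adic distance $m^{-n}$, then $s_k=t_k$ for all $k<n$. The argument parallels Lemma \ref{0 series}: if $j<n$ is the least index with $s_j\neq t_j$, then the terms below index $j$ cancel, so the difference of the two limits equals $m^j(s_j-t_j)+\sum_{k>j}m^k(s_k-t_k)$. The tail $\sum_{k>j}m^k(s_k-t_k)$ is a limit of integers divisible by $m^{j+1}$, hence — since $\{z:|z|_m\le m^{-(j+1)}\}$ is a closed set — has norm at most $m^{-(j+1)}$; and the difference of limits has norm $\le m^{-n}\le m^{-(j+1)}$. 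Subtracting, the integer $m^j(s_j-t_j)$ has norm $\le m^{-(j+1)}$, forcing $m\mid(s_j-t_j)$, which is impossible for distinct residues mod $m$. This lemma immediately yields uniqueness.

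For existence, fix $\xi\in\mathbb{Q}_m$ and choose $\{x_i\}\subseteq D_m$ with $x_i\to\xi$; for each $n$ pick $K_n$ (increasing in $n$) so that $i,j\ge K_n$ implies $d(x_i,x_j)\le m^{-n}$. Applying the digit-matching lemma to pairs of $x_i$'s (whose $m$-adic expansions exist by Proposition \ref{m^k D sum}), all $x_i$ with $i\ge K_n$ share the same first $n$ digits; hence for each $k$ the $k$-th digit of $x_i$ is eventually a constant, call it $s_k$. Then $\sum_{k=0}^N m^k s_k$ agrees with the first $N+1$ digits of $x_{K_{N+1}}$, so $d\bigl(x_{K_{N+1}},\sum_{k=0}^N m^k s_k\bigr)\le m^{-(N+1)}$; together with $x_{K_{N+1}}\to\xi$ and the ultrametric inequality this gives $\sum_{k=0}^N m^k s_k\to\xi$, i.e. $\xi=\sum_{k=0}^\infty m^k s_k$.

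The step I expect to require the most care is the digit-matching lemma, specifically the justification that the tail of an $m$-adic series past index $N$ still has norm $\le m^{-(N+1)}$ in the completion — that is, that divisibility by $m^{N+1}$ (a closed $m$-adic ball) is preserved in limits — and the attendant shuttling between strict and non-strict norm bounds using that $|\cdot|_m$ skips the values strictly between consecutive powers of $m$. Once that is in place the rest is routine ultrametric manipulation. As an alternative to the Cauchy-sequence construction for existence, one could note that the map $\{0,\dots,m-1\}^{\mathbb{N}}\to\mathbb{Q}_m$ sending a digit sequence to the sum $\sum_k m^k s_k$ is continuous, its domain is compact, so its image is a closed subset of $\mathbb{Q}_m$ containing the dense subring $D_m$, hence all of $\mathbb{Q}_m$; uniqueness would still come from the digit-matching lemma (equivalently, from Corollary \ref{equal series}, whose proof goes through verbatim in $\mathbb{Q}_m$ since its partial sums lie in $D_m$ and the only limit invoked is $0$).
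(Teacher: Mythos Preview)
Your proposal is correct and follows essentially the same route as the paper: pass to a rapidly convergent sequence in $D_m$, expand each term via Proposition~\ref{m^k D sum}, argue that nearby expansions must agree in their initial digits (your digit-matching lemma is exactly what the paper does inline when it asserts that $|c_n - c_{n-1}|_m \le m^{-n}$ forces $s_k(n)=s_k(n-1)$ for small $k$), and assemble the stabilized digits into the expansion of the limit, with uniqueness via Corollary~\ref{equal series}. Your explicit isolation of the digit-matching step and the compactness alternative are nice touches the paper does not include, but the underlying argument is the same.
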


\begin{proof} Let $x$ be a point in $\mathbb{Q}_m$ and let $\{d_n\}_{n=0}^\infty$ be a sequence in $D_m$ converging to $x$.  Let $\{c_n\}_{n=0}^\infty$ be a subsequence of $\{d_n\}_{n=0}^\infty$ such that ${|c_n - x|_m \leq m^{-n-2}}$ for every positive integer $n$.  By the triangle inequality, \begin{equation*}|c_n - c_{n-1}|_m \leq |c_n - x|_m + |c_{n-1} - x|_m \leq m^{-n-2} + m^{-n-1} \leq m^{-n}.\end{equation*}  Using Proposition \ref{m^k D sum}, for each positive integer $n$ we represent $c_n$ as a series $\sum_{k=0}^\infty m^k s_k(n)$ for a unique sequence $\{s_k(n)\}_{k=0}^\infty$ of least nonnegative residues mod $m$.  So for each positive integer $n$, \begin{equation*}\left|\sum_{k=0}^\infty m^k (s_k(n)-s_k(n-1))\right|_m \leq m^{-n},\end{equation*} which implies that $s_k(n) - s_k(n-1) = 0 $ for $1 \leq k \leq n-1$.

We will now show that the limit of the series $\sum_{k=0}^\infty m^k s_k(k)$, which is Cauchy and therefore converges in $\mathbb{Q}_m$, is equal to $x$.  Observe that for any $n$, if $k \leq n$, then the fact that $s_k(n-1) = s_k(n)$ whenever $1 \leq k \leq n-1$ can be used repeatedly to show that \begin{equation*}s_k(k) = s_k(k+1) = s_k(k+2) = \cdots = s_k(n-1).\end{equation*}  It follows that \begin{equation*}\left|\sum_{k=0}^\infty m^k (s_k(n)-s_k(k))\right|_m \leq m^{-n}, \text{ or } \left|c_n - \sum_{k=0}^\infty m^k s_k(k)\right|_m \leq m^{-n}\end{equation*} for each $n$.  We have
\begin{equation*}
\left|\sum_{k=0}^\infty m^k s_k(k) - x\right|_m \leq \left|c_n - \sum_{k=0}^\infty m^k s_k(k)\right|_m + \left|c_n - x\right|_m \leq m^{-n} + m^{-n-2} \leq m^{-n+1}
\end{equation*} 
by the triangle inequality.  Since this inequality holds for any $n$, it follows that 
$\sum_{k=0}^\infty m^k s_k = x$.

The uniqueness of $\left\{s_k\right\}_{k=0}^\infty$ is once again established by Corollary \ref{equal series}.
\end{proof}

Now we move onto the next task: show that every element in $\mathbb{Q}_m$ can be represented as a sum of this form: \[\sum_{k=0}^\infty \left(\frac{m}{m+1}\right)^k s_k.\]
This kind of sum appears in Theorem \ref{m-adic sum}.  To show that every element of $\mathbb{Q}_m$ can be written in this form, we use an argument analogous to our previous argument showing every element of $\mathbb{Q}_m$ can be written in the ordinary ``$m$-adic" way.

Because we will also want to show uniqueness of such representations, we need the following lemma, analogous to Lemma \ref{0 series}, in order to show the uniqueness of these representations.

\begin{lemma} \label{0 series plus} Let $\{s_k\}_{k=0}^\infty$ be a sequence of least nonnegative residues mod $m$.  Then the series $\sum_{k=0}^\infty \left(\frac{m}{m+1}\right)^k s_k$ converges to 0 in $D_m$ if and only if $s_0 = s_1 = s_2 = \cdots = 0$. \end{lemma}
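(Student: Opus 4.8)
The plan is to imitate the proof of Lemma~\ref{0 series}, with $\left(\frac{m}{m+1}\right)^k$ playing the role of $m^k$. The reverse implication is trivial, since the zero series converges to $0$, so I concentrate on the forward direction.

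First I record the basic norm facts. Since $\gcd(m,m+1)=1$, we have $\frac{m}{m+1}\in D_m$. For each $k$, $\left(\frac{m}{m+1}\right)^k s_k = \frac{m^k s_k}{(m+1)^k}$, and because $(m+1)^k$ is relatively prime to $m$, the observation following the definition of the $m$-adic norm gives $\left|\left(\frac{m}{m+1}\right)^k s_k\right|_m = |m^k s_k|_m$. A direct divisibility check then shows $|m^k s_k|_m \le m^{-k}$ always, with equality precisely when $m\nmid s_k$: the largest power of $m$ dividing $m^k s_k$ is $m^k$ itself, and it is larger exactly when $m\mid s_k$.

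Now suppose $\sum_{k=0}^\infty \left(\frac{m}{m+1}\right)^k s_k = 0$, and assume for contradiction that some $s_k$ is nonzero; let $p$ be the least such index, so $0 < s_p < m$ and hence $\left|\left(\frac{m}{m+1}\right)^p s_p\right|_m = m^{-p}$. Write $S_n = \sum_{k=0}^n \left(\frac{m}{m+1}\right)^k s_k$ for the $n$-th partial sum. Since $s_k = 0$ for $k<p$, for every $n\ge p$ we can split
\[
S_n = \left(\frac{m}{m+1}\right)^p s_p + \sum_{k=p+1}^n \left(\frac{m}{m+1}\right)^k s_k,
\]
where the first term has $m$-adic norm exactly $m^{-p}$ while each summand of the second term has norm at most $m^{-(p+1)}$. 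The non-Archimedean inequality for $|\cdot|_m$ — a finite sum whose terms have pairwise distinct norms has norm equal to the largest of them, which is immediate from the definition of the norm and is exactly what underlies the triangle-inequality computation in Proposition~\ref{metric} — then forces $|S_n|_m = m^{-p}$ for all $n\ge p$. On the other hand, convergence of the series to $0$ supplies an $N_1$ with $|S_n|_m = |S_n - 0|_m < m^{-p}$ for all $n\ge N_1$; choosing any $n\ge\max\{p,N_1\}$ yields the contradiction $m^{-p} = |S_n|_m < m^{-p}$. Hence every $s_k$ vanishes.

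I expect the only point requiring genuine care to be the clean application of the non-Archimedean property to pin down $|S_n|_m$ \emph{exactly} rather than merely bound it. One must be mildly careful because the $m$-adic norm is not multiplicative when $m$ is composite, so the value $\left|\left(\frac{m}{m+1}\right)^p s_p\right|_m = m^{-p}$ is best extracted from the direct observation that $m^{p}$ is the exact power of $m$ dividing $m^p s_p$ when $0<s_p<m$, rather than from a product formula. Everything else is routine bookkeeping parallel to Lemma~\ref{0 series}; it may be worth isolating the ultrametric inequality as a one-line remark so it can be reused without comment in the companion results that follow.
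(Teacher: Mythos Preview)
Your argument is correct and lands on the same contradiction as the paper's proof, but the packaging differs. The paper clears denominators: it writes the partial sum as an integer over $(m+1)^N$, observes that $|S_N|_m<m^{-p}$ forces $m^{p+1}$ to divide the numerator $\sum_{k=p}^N m^k(m+1)^{N-k}s_k$, then peels off the terms with $k>p$ (which $m^{p+1}$ obviously divides) to conclude $m^{p+1}\mid m^p(m+1)^{N-p}s_p$, hence $m\mid s_p$. You instead invoke the ultrametric property of $|\cdot|_m$ to pin down $|S_n|_m=m^{-p}$ directly. Your route is cleaner and more reusable, and your suggestion to record the strong triangle inequality as a standalone remark is sensible; the paper never states it explicitly, and its proof of Proposition~\ref{metric} is in fact a bit loose on this point. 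One small phrasing nit: you do not need ``pairwise distinct norms'' --- what you actually use (and what holds) is that if one term has strictly larger norm than each of the others, then the sum has that norm. The hands-on divisibility argument in the paper has the advantage of being entirely self-contained from the definitions, while yours assumes the reader is comfortable with the non-Archimedean sharpening of the triangle inequality; either is fine.
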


\begin{proof} Suppose $\sum_{k=0}^\infty \left(\frac{m}{m+1}\right)^k s_k = 0$. Suppose, in order to obtain a contradiction,  that there are non-zero elements in $\{s_k\}$.  Let $p$ be the smallest integer such that $s_p \neq 0$.  Since $\sum_{k=0}^\infty \left(\frac{m}{m+1}\right)^k s_k = 0$, there exists some $N_1$ such that whenever $n \geq N_1$, $\left| \sum_{k=0}^n \left(\frac{m}{m+1}\right)^k s_k \right|_m < m^{-p}$.  Let $N = \min\{p, N_1\}$.  Now

\begin{align*}
m^{-p} &> \left| \sum_{k=0}^N \left(\frac{m}{m+1}\right)^k s_k \right|_m\\
&= \left|\left(\frac{m}{m+1}\right)^p s_p + \left(\frac{m}{m+1}\right)^{p+1}s_{p+1} + \cdots + \left(\frac{m}{m+1}\right)^N s_N \right|_m\\
&= \left|\frac{m^p(m+1)^{N-p}s_p + m^{p+1}(m+1)^{N-(p+1)}s_{p+1} + \cdots + m^Ns_N}{(m+1)^N} \right|_m,
\end{align*}
which means that $m^{p+1}$ must divide \[m^p(m+1)^{N-p}s_p + m^{p+1}(m+1)^{N-(p+1)}s_{p+1} + \cdots + m^Ns_N.\]  Since $m^{p+1}$ divides \[m^{p+1}(m+1)^{N-(p+1)}s_{p+1} + \cdots + m^Ns_N,\] it must also divide $m^p(m+1)^{N-p}s_p$.  This implies that $m$ divides $s_p$, which is a contradiction, for $0 < s_p < m$.  Therefore, there is no non-zero element in $\{s_k\}$.  This proves that $s_0 = s_1 = s_2 = \cdots = 0$.

The other direction of the proof is trivial. \end{proof}

\begin{corollary} \label{equal series plus} Let $\{s_k\}_{k=0}^\infty$ and $\{t_k\}_{k=0}^\infty$ be sequences of least nonnegative residues mod $m$.  Suppose $\sum_{k=0}^\infty \left(\frac{m}{m+1}\right)^k s_k$ and $\sum_{k=0}^\infty \left(\frac{m}{m+1}\right)^k t_k$ each converge to the same limit.  Then $s_k = t_k$ for every $k$.
\end{corollary}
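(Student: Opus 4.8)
The plan is to reduce this, exactly as Corollary~\ref{equal series} was reduced to Lemma~\ref{0 series}, to the ``zero series'' lemma that immediately precedes it, namely Lemma~\ref{0 series plus}. First I would use continuity of subtraction in $D_m$ (Proposition~\ref{D_m topological}): since $\sum_{k=0}^\infty \left(\frac{m}{m+1}\right)^k s_k$ and $\sum_{k=0}^\infty \left(\frac{m}{m+1}\right)^k t_k$ converge to the same limit, their term-by-term difference converges to $0$, i.e.
\[
\sum_{k=0}^\infty \left(\frac{m}{m+1}\right)^k (s_k - t_k) = 0 .
\]

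The one subtlety is that the coefficients $s_k - t_k$ need not be least nonnegative residues mod $m$, so Lemma~\ref{0 series plus} does not apply verbatim. However, each $s_k - t_k$ is an integer with $|s_k - t_k| \le m - 1$, and the proof of Lemma~\ref{0 series plus} uses nothing about its coefficients beyond the fact that the first nonzero one, $s_p$, satisfies $0 < |s_p| < m$ and is therefore not divisible by $m$. So I would either restate Lemma~\ref{0 series plus} for an arbitrary sequence of integers of absolute value at most $m-1$ (the proof being unchanged), or simply rerun its contradiction argument here: assuming some $s_p - t_p \neq 0$ with $p$ minimal, choose $N \ge p$ from the definition of convergence so that $\left|\sum_{k=0}^N \left(\frac{m}{m+1}\right)^k (s_k - t_k)\right|_m < m^{-p}$, clear the denominator $(m+1)^N$ (which is prime to $m$, so the $m$-adic norm is unaffected), and deduce that $m^{p+1}$ divides $m^p(m+1)^{N-p}(s_p - t_p)$. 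Since $\gcd(m, m+1) = 1$, this forces $m \mid s_p - t_p$, contradicting $0 < |s_p - t_p| < m$. Hence $s_k - t_k = 0$ for every $k$, i.e. $s_k = t_k$.

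The main (and essentially only) obstacle is this mismatch between the ``least nonnegative residue'' hypothesis of Lemma~\ref{0 series plus} and the possibly-negative differences that appear after subtracting the two series; once that is dealt with, the argument is a direct transcription of the proof of Corollary~\ref{equal series}.
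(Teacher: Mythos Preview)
Your proposal is correct and follows exactly the same route as the paper: subtract the two series and invoke Lemma~\ref{0 series plus} on the difference. In fact you are more careful than the paper itself, which simply writes ``by Lemma~\ref{0 series plus}, $s_k - t_k = 0$'' without commenting on the fact that the differences $s_k - t_k$ may be negative and hence are not literally least nonnegative residues mod $m$; your observation that the contradiction in Lemma~\ref{0 series plus} only needs $0 < |s_p - t_p| < m$ is exactly the patch the paper's one-line proof tacitly relies on.
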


\begin{proof} The hypotheses together imply that $\sum_{k=0}^\infty \left(\frac{m}{m+1}\right)^k (s_k - t_k) = 0$, so by Lemma \ref{0 series plus}, $s_k - t_k = 0$ for every $k$. \end{proof}

\begin{lemma} \label{sequence of equivs plus} Let $a_0 = a$ be an integer, and let $b$ be an integer relatively prime to $m$.  Let $\{a_n\}_{n=0}^\infty$ be a sequence of integers and let $\{s_n\}_{n=0}^\infty$ be a sequence of least nonnegative residues mod $m$, satisfying the following:

\begin{enumerate} 
\item $s_nb \equiv a_n$ mod $m$ for every $n$, and
\item $a_n = (m+1)\frac{a_{n-1} - s_{n-1}b}{m}$ for every $n \geq 1$. \end{enumerate} Then $(m+1)^n a \equiv \sum_{k=0}^n m^{k}(m+1)^{n-k}s_kb$ mod $m^{n+1}$ for every positive integer $n$.

\end{lemma}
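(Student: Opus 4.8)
The plan is to imitate the proof of Lemma~\ref{sequence of equivs}, carrying an extra power of $(m+1)$ through each step. Concretely, I would prove by induction on $j$ the following strengthened statement: for every nonnegative integer $j$ and every $n \geq j$,
\begin{equation*}
(m+1)^j a_{n-j} \equiv \sum_{k=n-j}^n m^{k-n+j}(m+1)^{n-k} s_k b \bmod{m^{j+1}}.
\end{equation*}
Once this is established, setting $n = j$ yields $(m+1)^n a \equiv \sum_{k=0}^n m^{k}(m+1)^{n-k} s_k b \bmod{m^{n+1}}$, which is exactly the conclusion of the lemma.

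For the base case $j = 0$ the claim reads $a_n \equiv s_n b \bmod{m}$, which is precisely hypothesis~(1). For the inductive step, assume the claim for $j$ and fix $n \geq j+1$. First I would clear denominators in hypothesis~(2) to get $m a_{n-j} = (m+1)(a_{n-j-1} - s_{n-j-1} b)$, i.e., after multiplying by $(m+1)^j$,
\begin{equation*}
(m+1)^{j+1} a_{n-j-1} = m\,(m+1)^j a_{n-j} + (m+1)^{j+1} s_{n-j-1} b.
\end{equation*}
Then I would multiply the inductive hypothesis through by $m$; since it was a congruence modulo $m^{j+1}$, Lemma~\ref{increasing-equiv} (with $z = m$, $k = 1$) upgrades it to a congruence modulo $m^{j+2}$, namely
\begin{equation*}
m\,(m+1)^j a_{n-j} \equiv \sum_{k=n-j}^n m^{k-n+j+1}(m+1)^{n-k} s_k b \bmod{m^{j+2}}.
\end{equation*}
Substituting this into the displayed identity and observing that the leftover term $(m+1)^{j+1} s_{n-j-1} b$ is exactly the $k = n-j-1$ summand $m^{k-n+j+1}(m+1)^{n-k} s_k b$ (at $k = n-j-1$ the exponent of $m$ collapses to $0$ and that of $m+1$ to $j+1$), one gets
\begin{equation*}
(m+1)^{j+1} a_{n-j-1} \equiv \sum_{k=n-j-1}^n m^{k-n+j+1}(m+1)^{n-k} s_k b \bmod{m^{j+2}},
\end{equation*}
which is the claim for $j+1$, closing the induction.

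There is no genuine obstacle here; the proof is bookkeeping with the exponents of $m$ and $m+1$. The one point to watch is the modulus: each application of hypothesis~(2) injects one extra factor of $m$, and it is precisely that factor which lets us push the modulus from $m^{j+1}$ to $m^{j+2}$ through Lemma~\ref{increasing-equiv}, keeping the induction self-sustaining. The $(m+1)^{n-k}$ weights are present simply because recursion~(2) here divides by $m$ \emph{and} multiplies by $m+1$, unlike recursion~(2) in Lemma~\ref{sequence of equivs}, which only divides by $m$; and hypothesis~(1) guarantees $m \mid a_{n-1} - s_{n-1} b$, so each $a_n$ really is an integer and all the manipulations above are legitimate.
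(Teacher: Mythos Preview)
Your proof is correct and follows essentially the same route as the paper's own argument: the same auxiliary claim indexed by $j$, the same base case from hypothesis~(1), and the same inductive step obtained by rewriting recursion~(2) as $(m+1)^{j+1}a_{n-j-1} = m(m+1)^j a_{n-j} + (m+1)^{j+1}s_{n-j-1}b$ and multiplying the inductive congruence by $m$. If anything, your bookkeeping on the leftover term $(m+1)^{j+1}s_{n-j-1}b$ is slightly more explicit than the paper's.
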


\begin{proof} We claim that for any nonnegative integer $j$, if $n \geq j$, then \begin{equation}(m+1)^j a_{n-j} \equiv \sum_{k=n-j}^n m^{k-n+j}(m+1)^{n-k}s_kb \text{ mod } m^{j+1}.\end{equation}  To prove this claim by induction, note that the case $j = 0$ is true by part (1) above: $a_n \equiv s_nb$ mod $m$.  Now suppose the claim holds for an arbitrary nonnegative integer $j$.  Since $a_{n-j} = (m+1)\frac{a_{n-j-1} - s_{n-j-1}b}{m}$, we have $(m+1)^{j+1}a_{n-j-1} = m(m+1)^ja_{n-j} + (m+1)^js_{n-j-1}b$.  By the inductive hypothesis, we derive \begin{equation}m(m+1)^ja_{n-j} \equiv \sum_{k=n-j}^n m^{k-n+j+1}(m+1)^{n-k}s_kb \text{ mod } m^{j+2},\end{equation} and so \begin{equation}(m+1)^{j+1} a_{n-j-1} \equiv \sum_{k=n-j-1}^n m^{k-n+j+1}(m+1)^{n-k}s_kb \text{ mod } m^{j+2}.\end{equation}  The claim is thus proved by induction.  Now take $n = j$ to get $(m+1)^n a_0 \equiv \sum_{k=0}^n m^{k}(m+1)^{n-k}s_kb$ mod $m^{n+1}$ for every positive integer $n$. \end{proof}

\begin{lemma} \label{our D sum} Every element of $D_m$ can be represented in the form \[\sum_{k=0}^\infty \left(\frac{m}{m+1}\right)^k s_k\] for a unique sequence $\left\{s_k\right\}_{k=0}^\infty$ of least nonnegative residues mod $m$. \end{lemma}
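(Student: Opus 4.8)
The plan is to mimic the proof of Proposition \ref{m^k D sum}, replacing Lemma \ref{sequence of equivs} with its ``plus'' version, Lemma \ref{sequence of equivs plus}. Given $a/b \in D_m$, we have $\gcd(b,m) = 1$, so $b$ is invertible modulo $m$. First I would build the sequences $\{a_n\}_{n=0}^\infty$ and $\{s_n\}_{n=0}^\infty$ required by Lemma \ref{sequence of equivs plus}: set $a_0 = a$, let $s_n$ be the least nonnegative residue of $a_n b^{-1}$ modulo $m$ (so that condition (1), $s_n b \equiv a_n \bmod m$, holds), and define $a_n = (m+1)\frac{a_{n-1} - s_{n-1}b}{m}$ for $n \geq 1$. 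A quick induction shows each $a_n$ is genuinely an integer: since $s_{n-1} b \equiv a_{n-1} \bmod m$, the quantity $\frac{a_{n-1} - s_{n-1}b}{m}$ is an integer, and multiplying by $m+1$ keeps it so.

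Next I would invoke Lemma \ref{sequence of equivs plus} to conclude $(m+1)^n a \equiv \sum_{k=0}^n m^{k}(m+1)^{n-k}s_k b \bmod{m^{n+1}}$ for every positive integer $n$. The key computation is then to rewrite the $n$-th partial sum of the target series over a common denominator,
\[\sum_{k=0}^n \left(\frac{m}{m+1}\right)^k s_k = \frac{\sum_{k=0}^n m^k (m+1)^{n-k} s_k}{(m+1)^n},\]
and subtract it from $a/b$:
\[\frac{a}{b} - \sum_{k=0}^n \left(\frac{m}{m+1}\right)^k s_k = \frac{(m+1)^n a - b\sum_{k=0}^n m^k (m+1)^{n-k} s_k}{b(m+1)^n}.\]
By the congruence just obtained, the numerator is divisible by $m^{n+1}$, while the denominator $b(m+1)^n$ is relatively prime to $m$ (both $b$ and $m+1$ are coprime to $m$). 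Hence the $m$-adic norm of this difference is at most $m^{-(n+1)}$. Since every partial sum lies in $D_m$ (its denominator is a power of $m+1$) and the partial sums converge to $a/b$, we conclude $a/b = \sum_{k=0}^\infty \left(\frac{m}{m+1}\right)^k s_k$.

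Finally, uniqueness follows immediately from Corollary \ref{equal series plus}: if two sequences of least nonnegative residues mod $m$ both yield series converging to $a/b$, that corollary forces them to agree term by term. I do not anticipate a serious obstacle; the only delicate point is bookkeeping the powers of $m+1$ and confirming that introducing the factor $(m+1)^n$ never destroys coprimality with $m$, which is precisely what makes the $m$-adic estimate go through.
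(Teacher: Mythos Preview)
Your proposal is correct and follows essentially the same route as the paper: construct the sequences $\{a_n\}$ and $\{s_n\}$ satisfying the hypotheses of Lemma~\ref{sequence of equivs plus}, apply that lemma to get the congruence $(m+1)^n a \equiv \sum_{k=0}^n m^{k}(m+1)^{n-k}s_k b \bmod{m^{n+1}}$, and deduce the $m$-adic convergence. Your write-up is in fact more explicit than the paper's (you spell out why each $a_n$ is an integer and why the denominator $b(m+1)^n$ stays coprime to $m$), and your citation of Corollary~\ref{equal series plus} for uniqueness is the appropriate one for this form of series.
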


\begin{proof} Let $a/b$ be an element of $D_m$.  Then $b$ must be relatively prime to $m$.  Let $\{a_n\}_{n=0}^\infty$ and $\{s_n\}_{n=0}^\infty$ be defined as in Lemma \ref{sequence of equivs plus}.  Then \begin{equation}(m+1)^n a \equiv \sum_{k=0}^n m^{k}(m+1)^{n-k}s_kb \text{ mod } m^{n+1}\end{equation} for every positive integer $n$.  Thus $\left|a/b - \sum_{k=0}^n \left(\frac{m}{m+1}\right)^{k}s_k\right|_m < m^{-n}$ for every positive integer $n$.  It follows that $a/b = \sum_{k=0}^\infty \left(\frac{m}{m+1}\right)^{k}s_k$.  The uniqueness of $\left\{s_k\right\}_{k=0}^\infty$ is established by Corollary \ref{equal series}. \end{proof}

\begin{proposition} \label{our Q sum} Every element of $\mathbb{Q}_m$ can be represented in the form $\sum_{k=0}^\infty \left(\frac{m}{m+1}\right)^k s_k$ for a unique sequence $\left\{s_k\right\}_{k=0}^\infty$ of least nonnegative residues mod $m$. \end{proposition}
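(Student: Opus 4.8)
The plan is to follow the proof of Proposition \ref{m^k Q sum} almost line for line, with the base $m$ replaced by the ``base'' $\frac{m}{m+1}$ and with Lemma \ref{our D sum} playing the role of Proposition \ref{m^k D sum}. Given $x \in \mathbb{Q}_m$, I would first choose a sequence $\{d_n\}_{n=0}^\infty$ in $D_m$ converging to $x$ (possible because $\mathbb{Q}_m$ is the Cauchy completion of $D_m$, so $D_m$ is dense), then pass to a subsequence $\{c_n\}_{n=0}^\infty$ with $|c_n - x|_m \le m^{-n-2}$, so that $|c_n - c_{n-1}|_m \le m^{-n}$ by the triangle inequality. By Lemma \ref{our D sum}, each $c_n$ has a unique expansion $c_n = \sum_{k=0}^\infty \left(\frac{m}{m+1}\right)^k s_k(n)$ with the $s_k(n)$ least nonnegative residues mod $m$.

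The crucial point is that the smallness of $|c_n - c_{n-1}|_m$ forces the low-order digits of consecutive $c_n$ to agree, namely $s_k(n) = s_k(n-1)$ for $0 \le k \le n-1$. This is a finite-digit localization of Lemma \ref{0 series plus}, and proving it is where the real work lies. Writing $t_k = s_k(n) - s_k(n-1)$, the series $\sum_{k=0}^\infty \left(\frac{m}{m+1}\right)^k t_k$ converges to $c_n - c_{n-1}$, which has $m$-adic norm at most $m^{-n}$; since every tail of the series has norm at most $m^{-(N+1)}$, I may pick $N \ge n$ with the partial sum through index $N$ still of norm at most $m^{-n}$, and then clearing the denominator $(m+1)^N$ shows $m^n \mid \sum_{k=0}^N m^k (m+1)^{N-k} t_k$. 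If $p \le n-1$ were the smallest index with $t_p \ne 0$, then $m^{p+1}$ divides this sum and divides every term with index exceeding $p$, hence divides $m^p (m+1)^{N-p} t_p$; because $\gcd(m, m+1) = 1$ this forces $m \mid t_p$, which is impossible since $0 < |t_p| < m$. So the leading $n$ digits stabilize.

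From here the argument is the same bookkeeping as in Proposition \ref{m^k Q sum}. The diagonal digits $s_k := s_k(k)$ are well defined, since $s_k(k) = s_k(k+1) = s_k(k+2) = \cdots$ by repeated application of the stabilization just proved; the series $\sum_{k=0}^\infty \left(\frac{m}{m+1}\right)^k s_k$ is Cauchy because $\left|\left(\frac{m}{m+1}\right)^k\right|_m = m^{-k}$, so it converges in $\mathbb{Q}_m$; and for every $n$ the estimate $\left|\sum_{k=0}^\infty \left(\frac{m}{m+1}\right)^k s_k - x\right|_m \le \left|c_n - \sum_{k=0}^\infty \left(\frac{m}{m+1}\right)^k s_k\right|_m + |c_n - x|_m \le m^{-n} + m^{-n-2}$ shows that the limit of the series is $x$. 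Uniqueness of the representation follows from Corollary \ref{equal series plus}, exactly as uniqueness in Proposition \ref{m^k Q sum} follows from Corollary \ref{equal series}. The hard part will be the digit-stabilization step: one must carry the extra factors of $m+1$ correctly through the divisibility argument, but once it is set up in the style of Lemma \ref{0 series plus} the rest is routine.
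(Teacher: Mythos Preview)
Your proposal is correct and follows essentially the same route as the paper, which simply says the proof ``follows the proof of Proposition~\ref{m^k Q sum} exactly'' with Lemma~\ref{our D sum} in place of Proposition~\ref{m^k D sum} and uniqueness via Lemma~\ref{0 series plus}. In fact you supply more detail than the paper does: you spell out the digit-stabilization step (the finite-level analogue of Lemma~\ref{0 series plus}) that the paper leaves implicit, and your handling of the extra $(m+1)$ factors in the divisibility argument is exactly right.
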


\begin{proof} This proof follows the proof of Proposition \ref{m^k Q sum} exactly to produce a series that will converge to a given point in $\mathbb{Q}_m$.  To show uniqueness, we use Lemma \ref{0 series plus}. \end{proof}

\section{Rotation remainder expansions}  We proved that every element of $\mathbb{Q}_m$ can be represented in the form $\sum_{k=0}^\infty \left(\frac{m}{m+1}\right)^k s_k$ for a unique sequence $\{s_k\}_{k=0}^\infty$ of least nonnegative residues mod $m$.  This kind of sum appeared in Theorem \ref{m-adic sum}.  So, what do these sequences $\{s_k\}_{k=0}^\infty$ look like, and how can we see that they represent the number we say they represent?  Furthermore, we know that the sum form $\sum_{k=0}^\infty \left(\frac{m}{m+1}\right)^k s_k$ appears in Theorem \ref{m-adic sum}, but how exactly do these representations apply to the problems posed in this paper?  In this section we will use concrete examples to answer these questions.  We will restrict ourselves to the case $m = 3$ because this is a case with sufficient complexity to reveal all the basic patterns and difficulties of the general problem.  But first, let us establish a consistent notation to match elements in $\mathbb{Q}_m$ with their expansions as sums.

\begin{definition} We define a function $R$ mapping $\mathbb{Q}_m$ into sequences of least nonnegative residues mod $m$ as follows.  Let $q \in \mathbb{Q}_m$ be given.  Suppose $\{s_k\}_{k=0}^\infty$ is a sequence of least nonnegative residues mod $m$ such that $q = \sum_{k=0}^\infty \left(\frac{m}{m+1}\right)^k s_k$.  Then we say $R(q) = \{s_k\}_{k=0}^\infty$.  This is called the \emph{rotation remainder expansion} of $q$. \end{definition}

\begin{remark} First, we note that $R$ is a bijection, with $R^{-1}$ mapping any sequence $\{s_k\}_{k=0}^\infty$ of least nonnegative residues mod $m$ to the point $q = \sum_{k=0}^\infty \left(\frac{m}{m+1}\right)^k s_k$ in $\mathbb{Q}_m$.  Second, we may refer to the terms $s_k$ in the sequence $R(q) = \{s_k\}_{k=0}^\infty$ as ``digits" of the rotational remainder expansion of $q$.  Since we start with $s_0$, we will likely refer to $s_{k-1}$ as the $k$th digit in the expansion. \end{remark}

\subsection*{Relating expansions to $T_m$}

A concrete example of a rotation remainder expansion will help relate this concept to the original problem posed in the introduction.  Consider \begin{equation*}R(16) = \{1, 2, 0, 2, 1, 1, 2, 1, 2, 2, 2, 0, \ldots\},\end{equation*} which the reader will recognize as $\{r_n(1,0)\}_{n=1}^\infty$. (See Equation \eqref{1 sequence}) Thus the sequence $R(16)$ identifies the column positions in which the 1 appears if we restrict our view to the first three columns.  This example is just an illustration of the fact that we can rewrite Theorem \ref{m-adic sum} in the following way:

\begin{theorem} \label{m-adic sum restated} In $\mathbb{Q}_m$, for any starting row $x$ and column $r$, $R((m+1)y_0(x,r)) = \{r_{k+1}(x,r)\}_{k=0}^\infty$. \end{theorem}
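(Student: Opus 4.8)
The plan is to unwind the recursion $y_{n+1}(x,r)=\lfloor (m+1)y_n(x,r)/m\rfloor$ into a telescoping identity in $\mathbb{Q}_m$ and then pass to the limit. Write $y_n=y_n(x,r)$ and $r_n=r_n(x,r)$, and set $z_n=(m+1)y_n$; each $z_n$ is a nonnegative integer, hence lies in $D_m\subset\mathbb{Q}_m$, and $\tfrac{m}{m+1}\in D_m$ since $\gcd(m,m+1)=1$. The goal is to prove that $z_0=\sum_{k=0}^\infty\bigl(\tfrac{m}{m+1}\bigr)^k r_{k+1}$ in $\mathbb{Q}_m$. Once this is done, the conclusion follows quickly: each $r_{k+1}$ is a least nonnegative residue mod $m$, so this series exhibits $\{r_{k+1}(x,r)\}_{k=0}^\infty$ as a rotation remainder expansion of $z_0=(m+1)y_0(x,r)$, and uniqueness of such expansions (Corollary \ref{equal series plus}, via Proposition \ref{our Q sum}) forces $R\bigl((m+1)y_0(x,r)\bigr)=\{r_{k+1}(x,r)\}_{k=0}^\infty$. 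This is the statement of the theorem, and multiplying through by $\tfrac{m}{m+1}$ recovers part (3) of Theorem \ref{m-adic sum}.

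The key step is the one-step relation
\[
z_n=\frac{m}{m+1}\,z_{n+1}+r_{n+1}\qquad(n\ge 0).
\]
To obtain it, use the division underlying the floor in the definition of $y_{n+1}$ to write $(m+1)y_n=m\,y_{n+1}+\tau_n$ with $\tau_n\in\{0,1,\dots,m-1\}$. Reducing modulo $m+1$ and using $m\equiv-1$ gives $0\equiv-y_{n+1}+\tau_n \bmod{(m+1)}$, so $\tau_n\equiv y_{n+1} \bmod{(m+1)}$; by Proposition \ref{y_n}, $y_{n+1}\equiv r_{n+1} \bmod{(m+1)}$, and since $\tau_n$ and $r_{n+1}$ both lie in $\{0,\dots,m-1\}$ (for $r_{n+1}$ this is built into the definition) they must be equal. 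Substituting $\tau_n=r_{n+1}$ and $m\,y_{n+1}=\tfrac{m}{m+1}(m+1)y_{n+1}=\tfrac{m}{m+1}z_{n+1}$ yields the relation. I expect this digit-extraction step---recognizing the remainder of $(m+1)y_n$ upon division by $m$ as precisely the next rotation remainder $r_{n+1}$---to be where the real work lies, since it is the one place the two moduli $m$ and $m+1$ must be reconciled; the rest is formal.

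Iterating the one-step relation from $n=0$ gives, for every $n\ge 1$,
\[
z_0=\sum_{k=0}^{n-1}\left(\frac{m}{m+1}\right)^{k}r_{k+1}+\left(\frac{m}{m+1}\right)^{n}z_n,
\]
an algebraic identity holding already in $D_m$, hence in $\mathbb{Q}_m$. It then remains to check that the tail tends to $0$ in the $m$-adic norm: $\bigl(\tfrac{m}{m+1}\bigr)^n z_n=\tfrac{m^n z_n}{(m+1)^n}$ has numerator divisible by $m^n$ and denominator coprime to $m$, so $\bigl|\bigl(\tfrac{m}{m+1}\bigr)^n z_n\bigr|_m\le m^{-n}\to 0$ (alternatively, apply Lemma \ref{mult-lemma} together with $|z_n|_m\le 1$). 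Letting $n\to\infty$ therefore gives $z_0=\sum_{k=0}^\infty\bigl(\tfrac{m}{m+1}\bigr)^k r_{k+1}$ in $\mathbb{Q}_m$, and the theorem follows as explained above.
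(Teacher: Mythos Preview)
Your argument is correct and is essentially the paper's own proof. Your one-step relation $(m+1)y_n=my_{n+1}+r_{n+1}$ is exactly Equation~\eqref{recurrence} rewritten in terms of $y_n$ (the paper derives it directly from the floor formulas for $x_{n+1}$ and $r_{n+1}$, while you route through Proposition~\ref{y_n} and a mod $m+1$ argument; both are fine, and note that the paper's proof of Proposition~\ref{y_n} itself rests on \eqref{recurrence}). Iterating that relation, the paper records the integer congruence in Proposition~\ref{mod m^n}, whereas you write the same identity in $D_m$ as $z_0=\sum_{k=0}^{n-1}(m/(m+1))^k r_{k+1}+(m/(m+1))^n z_n$ and observe the tail has norm at most $m^{-n}$; these are equivalent statements, so the two proofs coincide up to packaging.
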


\subsection*{Producing expansions}

Now we want to understand how to compute rotation remainder expansions.  Consider again $R(16)$, as depicted above.  By definition, this sequence represents the integer 16 in the sense that \begin{equation}16 = 1 + 2\left(\frac{3}{4}\right) + 0\left(\frac{9}{16}\right) + 2\left(\frac{27}{64}\right) + \cdots,\end{equation} where these coefficients in this power series come from $R(16)$.  Let us consider what this means in $\mathbb{Q}_3$.

\begin{enumerate}
\item $16 - 1 = 15$, which is divisible by 3;

\item $16 - (1 + 2\left(\frac{3}{4}\right)) = \frac{54}{4}$, whose numerator is divisible by $3^2$;

\item $16 - (1 + 2\left(\frac{3}{4}\right) + 0\left(\frac{9}{16}\right)) = \frac{216}{16}$, whose numerator is divisible by $3^3$;

\item $16 - (1 + 2\left(\frac{3}{4}\right) + 0\left(\frac{9}{16}\right) + 2\left(\frac{27}{64}\right)) = \frac{810}{64}$, whose numerator is divisible by $3^4$;\\ and so on.

\end{enumerate}

Given the norm used in $\mathbb{Q}_3$, this progression shows that the series converges to 16 because each successive partial sum differs from 16 by a greater power of 3.

This is a rather exotic expansion to represent the integer 16.  However, we have proved that this is the only possible sequence that could represent 16 using least nonnegative residues mod 3.  How would one compute this expansion without looking at the triangle $T_3$?  We would choose successive digits one at a time to get the pattern shown above:

\begin{enumerate}

\item We desire $3 \mid 16 - s_0$.  Should we choose $s_0 = 0, 1,$ or 2?  Clearly $s_0 = 1$.

\item Now we desire $3^2 \mid 4(16 - s_0) - 3s_1$.  Should we choose $s_1 = 0, 1,$ or 2?  We must choose $s_1 = 2$ to get $4(16 - s_0) - 3s_1 = 54$, the numerator of the second partial sum in our series.

\item Now we desire $3^3 \mid 4(4(16 - s_0) - 3s_1) - 3^2s_2$.  Should we choose $s_2 = 0, 1,$ or 2?  We must choose $s_2 = 0$ to get $4(4(16 - s_0) - 3s_1) - 3^2s_2 = 216$, the numerator of the second partial sum in our series.

Continue in this fashion.  At each step, the next digit is uniquely determined by a linear equation modulo $3^k$, where $k$ increases by 1 every step.
\end{enumerate}

We can use this technique to get $R(q)$ for any rational number $q$.  The process of deriving $R(q)$ for an irrational number $q \in \mathbb{Q}_m$ involves applying this process to each term in a sequence of rationals that converges to $q$ in $\mathbb{Q}_m$.  See the example from ``The Ring $D_m$" on sequences converging to square roots.   For now, let us focus on integers $q \in \mathbb{Q}_m$.  There are some very simple expansions to look at before looking at all the exotic ones.  Here are a few examples:

\begin{align*}
R(0) &= \{0, 0, 0, 0, 0, 0, 0, 0, 0, 0, 0, 0, 0, 0, 0, 0, 0, 0, 0, 0, 0,\ldots\}\\
R(1) &= \{1, 0, 0, 0, 0, 0, 0, 0, 0, 0, 0, 0, 0, 0, 0, 0, 0, 0, 0, 0, 0,\ldots\}\\
R(2) &= \{2, 0, 0, 0, 0, 0, 0, 0, 0, 0, 0, 0, 0, 0, 0, 0, 0, 0, 0, 0, 0,\ldots\}\\
R(3) &= \{0, 1, 1, 1, 1, 1, 1, 1, 1, 1, 1, 1, 1, 1, 1, 1, 1, 1, 1, 1, 1,\ldots\}\\
R(4) &= \{1, 1, 1, 1, 1, 1, 1, 1, 1, 1, 1, 1, 1, 1, 1, 1, 1, 1, 1, 1, 1,\ldots\}\\
R(5) &= \{2, 1, 1, 1, 1, 1, 1, 1, 1, 1, 1, 1, 1, 1, 1, 1, 1, 1, 1, 1, 1,\ldots\}\\
R(6) &= \{0, 2, 2, 2, 2, 2, 2, 2, 2, 2, 2, 2, 2, 2, 2, 2, 2, 2, 2, 2, 2,\ldots\}\\
R(7) &= \{1, 2, 2, 2, 2, 2, 2, 2, 2, 2, 2, 2, 2, 2, 2, 2, 2, 2, 2, 2, 2,\ldots\}\\
R(8) &= \{2, 2, 2, 2, 2, 2, 2, 2, 2, 2, 2, 2, 2, 2, 2, 2, 2, 2, 2, 2, 2,\ldots\}\\
R(9) &= \{0, 0, 1, 2, 0, 2, 1, 1, 2, 1, 2, 2, 2, 0, 1, 0, 0, 1, 1, 0, 0,\ldots\}\\
& \ldots
\end{align*}

There are three casual observations worth making here.  First observation: all these sequences are eventually constant except for $R(9)$, which is aperiodic (unfortunately the ellipsis fails to capture this).  A constant sequence $R(q)$ means $q$ can be written as a geometric series in $\mathbb{Q}_3$.  For instance, we have \begin{equation} 4 = 1 + \frac{3}{4} + \frac{9}{16} + \frac{27}{64} + \cdots. \end{equation} This equation should not be surprising, since it holds true in $\mathbb{R}$ as well as in $\mathbb{Q}_3$.  The reason why it holds in $\mathbb{Q}_3$ is a little different than in $\mathbb{R}$.  The partial sums of this series can be written in the condensed form $\frac{1 - \left(\frac{3}{4}\right)^k}{1 - \frac{3}{4}} = 4\left(1 - \left(\frac{3}{4}\right)^k \right)$ for $k = 1, 2, 3, \ldots$.  As $k$ gets large, $\left(\frac{3}{4}\right)^k$ converges to 0 in $\mathbb{Q}_3$ because the numerator in this expression contains an increasingly large power of 3.  Thus the partial sums of $1 + \frac{3}{4} + \frac{9}{16} + \frac{27}{64} + \cdots$ converge to $4(1 - 0) = 4$.  In this way we see that whenever $R(q)$ is eventually periodic or constant, it follows that $q$ is rational, and in the real number sense, $0 \leq q \leq 8$.  The shocking result is that whenever a real number $q \in \mathbb{Q}_3$ is greater than 8 as compared in the real numbers, then $R(q)$ must be aperiodic.  Though lacking some details, this is one possible way to prove Theorem \ref{aperiodic}.

Second observation: the tail of $R(9)$ starting with the third term is exactly the same as $R(16)$.  It is not hard to see why this is so: if we multiply $q = \sum_{k=0}^\infty \left(\frac{m}{m+1}\right)^k s_k$ by $3/4$, we get a sum of the same form with indices shifted up by one and a zero taking the 0 index.  Indeed, $R(12)$ looks like $\{0, 1, 2, 0, 2, 1, 1, 2, 1, 2, 2, 2, 0, 1, 0,\ldots\}$, the result of multiplying the series representing 16 by $3/4$ once, and to get $R(9)$ we multiply by $3/4$ again.  In general, $R(3q/4)$ is just the sequence $R(q)$ with a 0 added onto the beginning.  This illustrates a similarity between rotation remainder expansions and decimal expansions: multiplying by the base of the number system (in decimals, the base is 10; in this case, the ``base" is $3/4$) adds a zero to the expansion and shifts the digits over by one place.

Third observation: as we continually add 1, we see the first digit change in a predictable way.  We start with 0, move up to 1, then to 2, then jump back down to 0 again, and repeat.  When we jump back down to 0 in the first digit, we expect that then the second digit will be affected by a sort of ``carry," just as in decimals, $.01 + .09 = .10$.  We do find this carry in rotation remainder expansions, but we also find that \emph{all} the digits beyond the first digit are affected by this carry.  Thus $R(0), R(3),$ and $R(6)$, though all equivalent mod 3, have \emph{nothing} in common past the first digit.  This illustrates a difference between rotational remainder expansions and decimal expansions: in decimal expansions, adding 1 continually generally leaves most digits intact, whereas adding 1 continually in rotation remainder expansions will periodically change all of the digits beyond the first one.

\subsection*{Arithmetic on expansions}

Since these sequences represent integers, one has to wonder how to manipulate these sequences like we manipulate integers -- adding, subtracting, multiplying, and dividing.  We will now explore addition and multiplication.

As a first hint at how to add these sequences, we return to the third observation made above: as we add 1 repeatedly, there is at some points a ``carry" that affects not only the second digit, but all the digits beyond the first.  For instance, upon adding $R(1)$ and $R(2)$ to get $R(3)$, all of the digits beyond the first become 1's.  It turns out that this cumulative carry happens in all cases.  Here are a couple of examples to illustrate.

\[\begin{array}{ccccccccccccccccccc}
  \ & \ & \multicolumn{3}{c}{\text{\tiny{Carry:}}} \text{\tiny{0}} & \text{\tiny{1}} & \text{\tiny{2}} & \text{\tiny{3}} & \text{\tiny{5}} & \text{\tiny{7}} & \text{\tiny{10}} & \text{\tiny{14}} & \text{\tiny{19}} & \text{\tiny{26}} & \text{\tiny{35}} & \text{\tiny{48}} & \text{\tiny{65}}  & \ldots & \ \\
  \ & R(1) & = & \{ & 1 & 0, & 0, & 0, & 0, & 0, & 0, & 0, & 0, & 0, & 0, & 0, & 0, & \ldots & \} \\
  + & R(8) & = & \{ & 2, & 2, & 2, & 2, & 2, & 2, & 2, & 2, & 2, & 2, & 2, & 2, & 2, & \ldots & \} \\
  \hline
  = & R(9) & = & \{ & 0, & 0, & 1, & 2, & 0, & 2, & 1, & 1, & 2, & 1, & 2, & 2, & 2, & \ldots & \} \\
\end{array}\]

In this first example, 1 and 2 added in the first position to get 3, which we divided by 3 to get a carry of 1 and a remainder of 0.  Then in the second position, the carry of 1 was added to the digits 0 and 2 to get 3, which we divided by 3 to get a carry of 1 and a remainder of 0.  This carry was added on to the carry already accumulated, so in the third position, we had a total carry of 2.  This was added to 0 and 2 to get 4, which left a carry of 1 and a remainder of 1.  The process of adding continues in this way, almost exactly like decimal addition except that the carry is cumulative.  Here is another example.
\[\begin{array}{ccccccccccccccccccc}
  \ & \ & \multicolumn{3}{c}{\text{\tiny{Carry:}}} \text{\tiny{0}} & \text{\tiny{0}} & \text{\tiny{0}} & \text{\tiny{0}} & \text{\tiny{0}} & \text{\tiny{1}} & \text{\tiny{1}} & \text{\tiny{2}} & \text{\tiny{3}} & \text{\tiny{4}} & \text{\tiny{5}} & \text{\tiny{7}} & \text{\tiny{\text{\tiny{10}}}}  & \ldots & \ \\
  \ & R(34) & = & \{ & 1, & 1, & 1, & 0, & 2, & 0, & 1, & 2, & 0, & 0, & 0, & 2, & 0, & \ldots & \} \\
  + & R(25) & = & \{ & 1, & 1, & 0, & 2, & 2, & 1, & 2, & 1, & 2, & 1, & 2, & 2, & 2, & \ldots & \} \\
  \hline
  = & R(59) & = & \{ & 2, & 2, & 1, & 2, & 1, & 2, & 1, & 2, & 2, & 2, & 1, & 2, & 0, & \ldots & \} \\
\end{array}\]

The following proposition gives the formal procedure for adding two of these sequences.  We no longer restrict ourselves to $m = 3$, but rather we look at $\mathbb{Q}_m$ in general.

\begin{proposition} Let $a$ and $b$ be elements in $\mathbb{Q}_m$, and let $c = a + b$.  Suppose $R(a) = \{s_n(a)\}_{n=0}^\infty$, $R(b) = \{s_n(b)\}_{n=0}^\infty$, and $R(c) = \{s_n(c)\}_{n=0}^\infty$.  Define a ``carry" sequence as follows: let $\kappa_0 = 0$ and for $n \geq 0$ let \begin{equation} \kappa_{n+1} = \kappa_n + \left\lfloor \frac{s_n(a) + s_n(b) + \kappa_n}{m} \right\rfloor. \end{equation} Then $s_n(c) \equiv s_n(a) + s_n(b) + \kappa_n \bmod{m}$ for every $n$. \end{proposition}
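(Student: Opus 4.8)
The plan is to produce the sequence $\{t_n\}_{n=0}^\infty$ of least nonnegative residues mod $m$ defined by the requirement $t_n \equiv s_n(a) + s_n(b) + \kappa_n \bmod m$, and then to show that $\{t_n\}$ is precisely the rotation remainder expansion of $c = a + b$. Once that is done the proposition follows immediately from the uniqueness half of Proposition~\ref{our Q sum} (equivalently, Corollary~\ref{equal series plus}): by definition $R(c) = \{s_n(c)\}$ is \emph{the} sequence of residues mod $m$ with $\sum_k (m/(m+1))^k s_k(c) = c$, so if $\{t_n\}$ also has this property then $t_n = s_n(c)$ for all $n$, which is exactly the claimed congruence.

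First I would record the elementary bookkeeping identity coming from the definition of the carry. Dividing with remainder, $s_n(a) + s_n(b) + \kappa_n = m\lfloor (s_n(a)+s_n(b)+\kappa_n)/m\rfloor + t_n$, and by definition $\lfloor (s_n(a)+s_n(b)+\kappa_n)/m\rfloor = \kappa_{n+1} - \kappa_n$; combining these gives $s_n(a) + s_n(b) = t_n + m\kappa_{n+1} - (m+1)\kappa_n$ for every $n$. (Note each $\kappa_n$ is a well-defined integer, obtained from $\kappa_0 = 0$ by integer operations.) Since $R(a)$ and $R(b)$ represent $a$ and $b$ in $\mathbb{Q}_m$, we have $a + b = \sum_{k=0}^\infty (m/(m+1))^k\bigl(s_k(a)+s_k(b)\bigr)$, so it suffices to analyze the partial sums $\sum_{k=0}^N (m/(m+1))^k\bigl(s_k(a)+s_k(b)\bigr)$.

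The key observation is that, after substituting the bookkeeping identity, the ``correction'' part telescopes. Using $(m/(m+1))^k\, m\,\kappa_{k+1} = (m+1)(m/(m+1))^{k+1}\kappa_{k+1}$, the sum $\sum_{k=0}^N (m/(m+1))^k\bigl(m\kappa_{k+1} - (m+1)\kappa_k\bigr)$ collapses to $(m+1)\kappa_{N+1}(m/(m+1))^{N+1} = \kappa_{N+1}\, m^{N+1}/(m+1)^N$, the lower endpoint vanishing because $\kappa_0 = 0$. Hence $\sum_{k=0}^N (m/(m+1))^k\bigl(s_k(a)+s_k(b)\bigr) = \sum_{k=0}^N (m/(m+1))^k t_k + \kappa_{N+1}\, m^{N+1}/(m+1)^N$. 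Now I would let $N \to \infty$: the left side converges to $a+b$; the sum $\sum_{k=0}^N (m/(m+1))^k t_k$ converges in $\mathbb{Q}_m$ since $\{t_k\}$ is a sequence of residues mod $m$ (this is the Cauchy-completeness argument already used in $\mathbb{Q}_m$); and the telescoped remainder tends to $0$ because $(m+1)^N$ is a unit for $|\cdot|_m$ (as $\gcd(m, m+1) = 1$), so $|\kappa_{N+1} m^{N+1}/(m+1)^N|_m = |\kappa_{N+1} m^{N+1}|_m \le m^{-(N+1)} \to 0$. Passing to the limit yields $a + b = \sum_{k=0}^\infty (m/(m+1))^k t_k$, and uniqueness gives $t_n = s_n(c)$.

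The main obstacle I anticipate is precisely the last step: verifying that the telescoped tail $(m+1)\kappa_{N+1}(m/(m+1))^{N+1}$ vanishes in the $m$-adic topology of $\mathbb{Q}_m$ and not in $\mathbb{R}$ (where the $\kappa_N$ grow without bound and the expression does \emph{not} go to zero). This is where one genuinely uses that $m+1$ is an $m$-adic unit and that the explicit factor $m^{N+1}$ sits in the numerator. A minor companion point, worth one sentence in the write-up, is the legitimacy of splitting $\lim_N(A_N + B_N)$ into $\lim_N A_N + \lim_N B_N$; this is valid once we know the partial-sum sequence $\{\sum_{k=0}^N (m/(m+1))^k t_k\}$ converges, which holds by the general existence of rotation remainder expansions. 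Everything else is routine algebra.
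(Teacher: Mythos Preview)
Your argument is correct, and in fact it is cleaner and more general than the paper's. The paper restricts to integer $a$ and $b$, introduces the auxiliary integer sequences $a_n,b_n,c_n$ from Lemma~\ref{sequence of equivs plus} (so $a_0=a$, $a_n=(m+1)(a_{n-1}-s_{n-1}(a))/m$, etc.), and then proves by induction that $c_n = a_n + b_n + (m+1)\kappa_n$; reducing mod $m$ gives the congruence. The general case is left with the remark that ``the proof generalizes to all cases when sufficient details are added.'' Your approach bypasses those auxiliary sequences entirely: you work directly with the series in $\mathbb{Q}_m$, derive the identity $s_n(a)+s_n(b) = t_n + m\kappa_{n+1} - (m+1)\kappa_n$, and observe that the correction telescopes to $\kappa_{N+1}\,m^{N+1}/(m+1)^N$, which vanishes $m$-adically. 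This handles arbitrary $a,b\in\mathbb{Q}_m$ in one stroke and makes explicit use of the completeness and topological ring structure that the earlier sections set up. The paper's route has the virtue of being purely finitary modular arithmetic in the integer case, with no limits; yours trades that for full generality and a shorter, more conceptual argument.
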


\begin{proof} We will not prove this in general, but rather limit ourselves to the case where $a$ and $b$ are integers.  The proof generalizes to all cases when sufficient details are added.

We offer a proof by induction.  First, let $\{a_n\}$ be defined as in Lemma \ref{sequence of equivs plus}; that is, let $a_0 = a$ and let $a_n = (m+1)\frac{a_{n-1} - s_{n-1}(a)}{m}$ for every $n \geq 1$.  Define $\{b_n\}$ and $\{c_n\}$ analogously.  By that same Lemma, we know that $a_n \equiv s_n(a)$ mod $m$ for every $n$, and likewise for $b_n$ and $c_n$.  We will prove that $c_n = a_n + b_n + (m+1)\kappa_n$ by induction.

For $n = 0$, we have the obvious equation $c = a + b$.  Now assume the equation holds for an arbitrary $n \geq 0$.  Reducing the equation mod $m$, we see that $s_n(c) \equiv s_n(a) + s_n(b) + \kappa_n \bmod{m}$, and since $s_n(c)$ is a least residue mod $m$, it follows that \begin{equation} \left\lfloor \frac{s_n(a) + s_n(b) + \kappa_n}{m} \right\rfloor = \frac{s_n(a) + s_n(b) + \kappa_n - s_n(c)}{m}. \end{equation}  Now observe that
\begin{align*}
a_{n+1} + b_{n+1} + (m+1)\kappa_{n+1} &= a_{n+1} + b_{n+1} + (m+1)\kappa_n\\
& \hspace{1 cm} + (m+1)\frac{s_n(a) + s_n(b) + \kappa_n - s_n(c)}{m}\\
&= (m+1)\frac{a_{n} - s_{n}(a)}{m} + (m+1)\frac{b_{n} - s_{n}(b)}{m}\\
& \hspace{1 cm} + (m+1)\kappa_n + (m+1)\frac{s_n(a) + s_n(b) + \kappa_n - s_n(c)}{m}\\
&= (m+1)\frac{a_n + b_n + (m+1)\kappa_n - s_n(c)}{m}\\
&= (m+1)\frac{c_n - s_n(c)}{m}\\
&= c_{n+1},
\end{align*}
as desired.  Therefore, the equation holds for all $n$.  Now reduce the equation mod $m$ to get $s_n(c) \equiv s_n(a) + s_n(b) + \kappa_n \bmod{m}$ for all $n$, as desired. \end{proof}

This algorithm can be used on as many addends as desired.  This makes a multiplication algorithm easy to deduce.

Observe that \begin{align*} \left(\sum_{k=0}^\infty \left(\frac{m}{m+1}\right)^k s_k(a) \right) &\left(\sum_{k=0}^\infty \left(\frac{m}{m+1}\right)^k s_k(b) \right) \\= s_0(b)\left(\sum_{k=0}^\infty \left(\frac{m}{m+1}\right)^k s_k(a)\right)& + s_1(b)\left(\sum_{k=1}^\infty \left(\frac{m}{m+1}\right)^k  s_k(a)\right) + \cdots. \end{align*} This suggests that if we simply learn our ``times tables" for a given sequence, learning to multiply it by each of the integers $0,1,\ldots,m-1$, then we can multiply it by any other sequence we want.  With $m = 3$, we only need to learn how to double a sequence, and then we can multiply it by any other sequence.  For example, to multiply $R(9)$ by $R(11)$ to get $R(99)$:

\[\begin{array}{ccccccccccccccccccc}
    \ & R(9) & = & \{ & 0, & 0, & 1, & 2, & 0, & 2, & 1, & 1, & 2, & 1, & 2, & 2, & 2, & \ldots & \} \\
  \times & R(11) & = & \{ & 2, & 0, & 1, & 2, & 0, & 2, & 1, & 1, & 2, & 1, & 2, & 2, & 2, & \ldots & \} \\
  \hline
  = & R(99) & = & \{ & 0, & 0, & 2, & 1, & 2, & 0, & 1, & 1, & 1, & 0, & 2, & 0, & 1, & \ldots & \} \\
\end{array}\]
we use the following multiplication algorithm.  First, we need to double $R(9)$ to get $R(18)$ using just the addition algorithm:

\[\begin{array}{ccccccccccccccccccc}
  \ & \ & \multicolumn{3}{c}{\text{\tiny{Carry:}}} \text{\tiny{0}} & \text{\tiny{0}} & \text{\tiny{0}} & \text{\tiny{0}} & \text{\tiny{1}} & \text{\tiny{1}} & \text{\tiny{2}} & \text{\tiny{3}} & \text{\tiny{4}} & \text{\tiny{6}} & \text{\tiny{8}} & \text{\tiny{12}} & \text{\tiny{\text{\tiny{17}}}}  & \ldots & \ \\
  \ & R(9) & = & \{ & 0, & 0, & 1, & 2, & 0, & 2, & 1, & 1, & 2, & 1, & 2, & 2, & 2, & \ldots & \} \\
  + & R(9) & = & \{ & 0, & 0, & 1, & 2, & 0, & 2, & 1, & 1, & 2, & 1, & 2, & 2, & 2, & \ldots & \} \\
  \hline
  = & R(18) & = & \{ & 0, & 0, & 2, & 1, & 1, & 2, & 1, & 2, & 2, & 2, & 0, & 1, & 0, & \ldots & \} \\
\end{array}\]

Now we can multiply $R(9)$ by $R(11)$ just like decimals: multiply $R(9)$ by each digit in $R(11)$, including a right shift just as you would in decimal multiplication (note the second observation under ``Producing expansions").

\[\begin{array}{ccccccccccccccccc}
  \multicolumn{3}{c}{\text{\tiny{Carry:}}} \text{\tiny{0}} & \text{\tiny{0}} & \text{\tiny{0}} & \text{\tiny{0}} & \text{\tiny{0}} & \text{\tiny{0}} & \text{\tiny{2}} & \text{\tiny{3}} & \text{\tiny{6}} & \text{\tiny{10}} & \text{\tiny{16}} & \text{\tiny{24}} & \text{\tiny{\text{\tiny{35}}}}  & \ldots & \ \\
  \times & \{ & 0, & 0, & 1, & 2, & 0, & 2, & 1, & 1, & 2, & 1, & 2, & 2, & 2, & \ldots & \} \\
  \overbrace{2} & \ & 0, & 0, & 2, & 1, & 1, & 2, & 1, & 2, & 2, & 2, & 0, & 1, & 0, & \ldots & \ \\
  0 & \ & 0, & 0, & 0, & 0, & 0, & 0, & 0, & 0, & 0, & 0, & 0, & 0, & 0, & \ldots & \ \\
  1 & \ & 0, & 0, & 0, & 0, & 1, & 2, & 0, & 2, & 1, & 1, & 2, & 1, & 2, & \ldots & \ \\
  2 & \ & 0, & 0, & 0, & 0, & 0, & 2, & 1, & 1, & 2, & 1, & 2, & 2, & 2, & \ldots & \ \\
  0 & \ & 0, & 0, & 0, & 0, & 0, & 0, & 0, & 0, & 0, & 0, & 0, & 0, & 0, & \ldots & \ \\
  2 & \ & 0, & 0, & 0, & 0, & 0, & 0, & 0, & 2, & 1, & 1, & 2, & 1, & 2, & \ldots & \ \\
  1 & \ & 0, & 0, & 0, & 0, & 0, & 0, & 0, & 0, & 1, & 2, & 0, & 2, & 1, & \ldots & \ \\
  1 & \ & 0, & 0, & 0, & 0, & 0, & 0, & 0, & 0, & 0, & 1, & 2, & 0, & 2, & \ldots & \ \\
  2 & \ & 0, & 0, & 0, & 0, & 0, & 0, & 0, & 0, & 0, & 0, & 2, & 1, & 1, & \ldots & \ \\
  1 & \ & 0, & 0, & 0, & 0, & 0, & 0, & 0, & 0, & 0, & 0, & 0, & 1, & 2, & \ldots & \ \\
  2 & \ & 0, & 0, & 0, & 0, & 0, & 0, & 0, & 0, & 0, & 0, & 0, & 0, & 2, & \ldots & \ \\
  \underbrace{\vdots} & \ & \ & \ & \ & \ & \ & \ & \ & \ & \ & \ & \ & \ & \ & \ & \ \\
  \hline
  = & \{ & 0, & 0, & 2, & 1, & 2, & 0, & 1, & 1, & 1, & 0, & 2, & 0, & 1, & \ldots & \}
\end{array}\]

And we get the desired result.  Note that it is very easy to multiply by $R(q)$ for integers $q$ in the range $0 \leq q \leq 8$ (in the real number sense).  For instance, multiplying $R(4)$ by itself is easy, but somewhat enlightening:

\[\begin{array}{ccccccccccccccccc}
  \multicolumn{3}{c}{\text{\tiny{Carry:}}} \text{\tiny{0}} & \text{\tiny{0}} & \text{\tiny{0}} & \text{\tiny{1}} & \text{\tiny{2}} & \text{\tiny{4}} & \text{\tiny{7}} & \text{\tiny{11}} & \text{\tiny{17}} & \text{\tiny{25}} & \text{\tiny{36}} & \text{\tiny{51}} & \text{\tiny{\text{\tiny{72}}}}  & \ldots & \ \\
  \times & \{ & 1, & 1, & 1, & 1, & 1, & 1, & 1, & 1, & 1, & 1, & 1, & 1, & 1, & \ldots & \} \\
  \overbrace{1} & \ & 1, & 1, & 1, & 1, & 1, & 1, & 1, & 1, & 1, & 1, & 1, & 1, & 1, & \ldots & \ \\
  1 & \ & 0, & 1, & 1, & 1, & 1, & 1, & 1, & 1, & 1, & 1, & 1, & 1, & 1, & \ldots & \ \\
  1 & \ & 0, & 0, & 1, & 1, & 1, & 1, & 1, & 1, & 1, & 1, & 1, & 1, & 1, & \ldots & \ \\
  1 & \ & 0, & 0, & 0, & 1, & 1, & 1, & 1, & 1, & 1, & 1, & 1, & 1, & 1, & \ldots & \ \\
  1 & \ & 0, & 0, & 0, & 0, & 1, & 1, & 1, & 1, & 1, & 1, & 1, & 1, & 1, & \ldots & \ \\
  1 & \ & 0, & 0, & 0, & 0, & 0, & 1, & 1, & 1, & 1, & 1, & 1, & 1, & 1, & \ldots & \ \\
  1 & \ & 0, & 0, & 0, & 0, & 0, & 0, & 1, & 1, & 1, & 1, & 1, & 1, & 1, & \ldots & \ \\
  1 & \ & 0, & 0, & 0, & 0, & 0, & 0, & 0, & 1, & 1, & 1, & 1, & 1, & 1, & \ldots & \ \\
  1 & \ & 0, & 0, & 0, & 0, & 0, & 0, & 0, & 0, & 1, & 1, & 1, & 1, & 1, & \ldots & \ \\
  1 & \ & 0, & 0, & 0, & 0, & 0, & 0, & 0, & 0, & 0, & 1, & 1, & 1, & 1, & \ldots & \ \\
  1 & \ & 0, & 0, & 0, & 0, & 0, & 0, & 0, & 0, & 0, & 0, & 1, & 1, & 1, & \ldots & \ \\
  1 & \ & 0, & 0, & 0, & 0, & 0, & 0, & 0, & 0, & 0, & 0, & 0, & 1, & 1, & \ldots & \ \\
  1 & \ & 0, & 0, & 0, & 0, & 0, & 0, & 0, & 0, & 0, & 0, & 0, & 0, & 1, & \ldots & \ \\
  \underbrace{\vdots} & \ & \ & \ & \ & \ & \ & \ & \ & \ & \ & \ & \ & \ & \ & \ & \ \\
  \hline
  = & \{ & 1, & 2, & 0, & 2, & 1, & 1, & 2, & 1, & 2, & 2, & 2, & 0, & 1, & \ldots & \}
\end{array}\]

And the result is $R(16)$, as expected.  It has already been mentioned that $R(16)$ is aperiodic, even though it is the sum of periodic sequences.  Simply adding the sequences mod 3 would result in the sequence $\{1,2,0,1,2,0,\ldots\}$, so in that sense the cumulative carry is most responsible for the aperiodicity of most rotation remainder expansions.

In the next section, we start proving the theorems stated in the introduction that show the connection between rotation remainder sequences and the triangle $T_m$.

\section{Proof of Theorem \ref{m-adic sum}}   To prove Theorem \ref{m-adic sum}, it remains to show that Equation \eqref{main expansion} holds. For this we have to establish recurrence relations for the sequences $\{x_n(x,r)\}$ and $\{r_n(x,r)\}$.  For simplicity, we will drop the arguments $x$ and $r$ and refer to these sequences as $\{x_n\}$ and $\{r_n\}$.  Recall that these sequences track the row and column positions of a number when it is found lying within the first $m$ columns.  So to compute $x_{n+1}$ and $r_{n+1}$ from $x_n$ and $r_n$, we utilize the recursive relations on $T_m(a,b)$ until we get a new column position within the first $m$ column positions, like so:
\begin{align*}
T_m(x_n,r_n) &= T_m(x_n,x_n+r_n)\\
&= T_m(x_n + 1,x_n+r_n - m)\\
&= T_m(x_n + 2,x_n+r_n - 2m)\\
&= \cdots\\
&= T_m\left(x_n + \left\lfloor\frac{x_n+r_n}{m}\right\rfloor,x_n+r_n - \left\lfloor\frac{x_n+r_n}{m}\right\rfloor m\right).
\end{align*}
Following this scheme, we see that \begin{equation*}x_{n+1} = x_n + \left\lfloor\frac{x_n+r_n}{m}\right\rfloor\end{equation*} and \begin{equation*}r_{n+1} = x_n+r_n - \left\lfloor\frac{x_n+r_n}{m}\right\rfloor m.\end{equation*}  This recursive relation can be expressed in the following way:

\begin{equation} \label{recurrence} (m+1)x_n + r_n = mx_{n+1} + r_{n+1}. \end{equation}

The recursively minded person will quickly see this equation being used repeatedly to see that \begin{equation*} (m+1)^j((m+1)x_{n-j} + r_{n-j}) = m^j(mx_{n+1} + r_{n+1}) + \sum_{k=n+1-j}^n (m+1)^{n-k}m^{k-1+j-n}r_k \end{equation*} for any nonnegative integer $j \leq n$.  This equation leads to the following sequence of equivalences, which establishes Theorem \ref{m-adic sum}.

\begin{proposition} \label{mod m^n} For any pair of integers $n,j$ with $n \geq 1$ and $0 \leq j \leq n$, \begin{equation*}(m+1)^{j}((m+1)x_{n-j} + r_{n-j}) \equiv \sum_{k=n-j+1}^n (m+1)^{n-k}m^{k-1+j-n}r_k \bmod{m^{j}}.\end{equation*}  In particular, \begin{equation*}(m+1)^n((m+1)x + r) \equiv \sum_{k=1}^n (m+1)^{n-k}m^{k-1}r_k \bmod{m^n}\end{equation*} for every $n \in \mathbb{N}$. \end{proposition}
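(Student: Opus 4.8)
The plan is to prove, by induction on $j$, the exact identity
\[
(m+1)^{j}\bigl((m+1)x_{n-j} + r_{n-j}\bigr) \;=\; m^{j}(mx_{n+1} + r_{n+1}) \;+\; \sum_{k=n-j+1}^n (m+1)^{n-k}m^{k-1+j-n}r_k
\]
already flagged in the paragraph before the proposition, and then to reduce it modulo $m^{j}$. The stated congruence drops out at once, since $m^{j}$ divides the summand $m^{j}(mx_{n+1}+r_{n+1})$, which therefore disappears mod $m^{j}$; and the ``in particular'' clause is just the case $j=n$, where $x_{n-j}=x_0=x$, $r_{n-j}=r_0=r$, and the exponent $k-1+j-n$ collapses to $k-1$.

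For the induction on $j$ (with $n$ fixed), the base case $j=0$ is nothing but \eqref{recurrence}: the sum is empty and the identity reads $(m+1)x_n+r_n=mx_{n+1}+r_{n+1}$. For the step, assume the identity for some $j<n$. Applying \eqref{recurrence} at index $n-j-1$ gives $(m+1)x_{n-j-1}+r_{n-j-1}=mx_{n-j}+r_{n-j}$, so
\[
(m+1)^{j+1}\bigl((m+1)x_{n-j-1}+r_{n-j-1}\bigr) = (m+1)^{j+1}\bigl(mx_{n-j}+r_{n-j}\bigr).
\]
I would then invoke the one-line rearrangement $(m+1)^{j+1}(mx+r) = m\,(m+1)^{j}\bigl((m+1)x+r\bigr) + (m+1)^{j}r$ (valid because $(m+1)-m=1$) with $x=x_{n-j}$, $r=r_{n-j}$, and substitute the inductive hypothesis for $(m+1)^{j}\bigl((m+1)x_{n-j}+r_{n-j}\bigr)$. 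Multiplying the hypothesis through by $m$ sends $m^{j}(mx_{n+1}+r_{n+1})$ to $m^{j+1}(mx_{n+1}+r_{n+1})$ and raises every power of $m$ in the sum by one, while the leftover term $(m+1)^{j}r_{n-j}$ is exactly the $k=n-j$ term of the target sum at level $j+1$ (at $k=n-j$ the coefficient $(m+1)^{n-k}m^{\,k-1+(j+1)-n}$ equals $(m+1)^{j}m^{0}$). Adjoining it widens the summation from $k\ge n-j+1$ to $k\ge n-(j+1)+1$, which is precisely the identity at level $j+1$, completing the induction.

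The only delicate part is the bookkeeping of the two exponent families (those of $m$ and of $m+1$) as the summation range slides by one, but this is mechanical once the rearrangement above is on the table, so I foresee no genuine obstacle. As an alternative that bypasses the induction, the ``in particular'' statement is exactly the case $b=1$, $a=(m+1)\bigl((m+1)x+r\bigr)$, $s_{k}=r_{k+1}$ of Lemma \ref{sequence of equivs plus}: setting $a_{k}:=(m+1)\bigl((m+1)x_{k}+r_{k}\bigr)$, the recurrence \eqref{recurrence} delivers both hypotheses of that lemma, namely $a_{k}=(m+1)\frac{a_{k-1}-s_{k-1}}{m}$ and $a_{k}\equiv s_{k}\bmod m$, and the general-$j$ version then follows by applying the same to the shifted sequences $\{x_{n-j+k}\},\{r_{n-j+k}\}$, which satisfy the same recurrence \eqref{recurrence}. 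In either approach, once the proposition is in hand, dividing the $j=n$ congruence by the $m$-adic unit $(m+1)^{n}$ and letting $n\to\infty$ yields Equation \eqref{main expansion}, finishing the proof of Theorem \ref{m-adic sum}.
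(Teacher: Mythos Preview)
Your proposal is correct and follows essentially the same route as the paper: the paper simply asserts that ``the recursively minded person will quickly see'' the exact identity by repeated use of \eqref{recurrence}, and you supply precisely that induction with all the bookkeeping made explicit. Your alternative derivation of the $j=n$ case via Lemma~\ref{sequence of equivs plus} (with $b=1$, $a_k=(m+1)\bigl((m+1)x_k+r_k\bigr)$, $s_k=r_{k+1}$) is a nice observation that the paper does not spell out, though it is clearly in the spirit of the surrounding development; it has the virtue of tying the proposition back to the machinery of Section~3 rather than redoing the same induction from scratch.
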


Proposition \ref{mod m^n} can be rewritten to give the sequence of equivalences \begin{equation*}(m+1)^{n-j}((m+1)x_{j} + r_{j}) \equiv \sum_{k=j+1}^n (m+1)^{n-k}m^{k-1-j}r_k \bmod{m^{n-j}},\end{equation*} which establishes the equation \begin{equation*} m((m+1)x_j + r_j) = \sum_{k=j+1}^\infty \left(\frac{m}{m+1}\right)^{k-j}r_k \end{equation*} in $\mathbb{Q}_m$, which is perhaps a more general way of stating Theorem \ref{m-adic sum}.  On, the other hand, it also helps prove Corollary \ref{tail story}.  Suppose we have $\bar{x}$ and $\bar{r}$ such that $\{r_n(\bar{x},\bar{r})\}_{n=0}^\infty$ is a tail of $\{r_n(x,r)\}_{n=0}^\infty$, i.e. there exists some $j$ such that $r_n(\bar{x},\bar{r}) = r_{n+j}(x,r)$ for every $n$.  Then by Theorem \ref{m-adic sum} we have
\begin{align*}
m((m+1)\bar{x} + \bar{r}) &= \sum_{k=1}^\infty \left(\frac{m}{m+1}\right)^{k}r_k(\bar{x},\bar{r})\\
&= \sum_{k=1}^\infty \left(\frac{m}{m+1}\right)^{k}r_{k+j}(x,r)\\
&= \sum_{k=j+1}^\infty \left(\frac{m}{m+1}\right)^{k-j}r_{k}(x,r)\\
&= m((m+1)x_j(x,r) + r_j(x,r)).
\end{align*}

Therefore, $(m+1)\bar{x} + \bar{r} = (m+1)x_j(x,r) + r_j(x,r)$.  Rearrange to get $(m+1)(\bar{x} - x_j(x,r)) = r_j(x,r) - \bar{r}$.  Since $-m < r_j(x,r) - \bar{r} < m$, it follows that $\bar{x} - x_j(x,r) = 0$, and so $r_j(x,r) - \bar{r} = 0$ as well.  This proves Corollary \ref{tail story}.

To prove Corollary \ref{equal tails}, observe that if $x, r, \bar{x}$ and $\bar{r}$ are integers such that $\{r_n(x,r)\}_{n=j}^\infty = \{r_n(\bar{x},\bar{r})\}_{n=j}^\infty$ for some positive integer $j$, then Corollary \ref{tail story} implies that $x_j(x,r) = x_j(\bar{x},\bar{r})$ and $r_j(x,r) = r_j(\bar{x},\bar{r})$.  But $x_j$ and $r_j$ can be used via Equation \eqref{recurrence} to determine $x$ and $r$, or $\bar{x}$ and $\bar{r}$.  Thus $x = \bar{x}$ and $r = \bar{r}$, giving us $\{r_n(x,r)\}_{n=0}^\infty = \{r_n(\bar{x},\bar{r})\}_{n=0}^\infty$, as desired.

\section{Aperiodicity of $\{r_n\}$}

The goal of this brief section is to show that for any positive integer $x$ and least residue $r$ modulo $m$, the sequence $\{r_n(x,r)\}$ is aperiodic.  There is more than one possible approach.  The route we will take utilizes the sequence $\{y_n(x,r)\}$, which was introduced as an ``encoding" of $\{x_n(x,r)\}$ and $\{r_n(x,r)\}$ in Proposition \ref{y_n}.  We will show that this sequence is strictly increasing, and then use this to show that the sequence $\{r_n(x,r)\}$ cannot be periodic.
Because the sequence $y_n$ is generated by repeated multiplication by $(m+1)/m$ combined with a flooring function, the result of this section has a similar flavor to \cite{forman1967arithmetic}.

\subsection*{Proof of Proposition \ref{y_n}}

First, let us prove Proposition \ref{y_n}.  Recall how we defined $\{y_n(x,r)\}$:

\begin{itemize}
\item Let $y_0(x,r) = (m+1)x_0(x,r) + r_0(x,r)$
\item Let $y_n(x,r) = \left\lfloor \frac{(m+1)y_{n-1}(x,r)}{m} \right\rfloor$ for each $n \in \mathbb{N}$.
\end{itemize}

We now seek to show that for each non-negative integer $n$,
\begin{equation*}\left\lfloor \frac{y_n(x,r)}{m+1} \right\rfloor = x_n(x,r) \text{ and } y_n(x,r) \equiv r_n(x,r)  \bmod{(m + 1)}.\end{equation*}  Observe that this statement clearly holds when $n = 0$.  Now suppose it holds for an arbitrary $n$.  We will show it holds for $n + 1$.

Suppose $\left\lfloor \frac{y_n}{m+1} \right\rfloor = x_n$ and $y_n \equiv r_n \bmod{(m+1)}$.  Then ${y_n = (m+1)x_n + r_n}$.  By Equation \eqref{recurrence}, $y_n = mx_{n+1} + r_{n+1}$.  Thus $y_n \equiv r_{n+1} \bmod{m}$.  Therefore, \begin{align*}y_{n+1}& = \left\lfloor \frac{(m+1)y_n}{m} \right\rfloor = \frac{(m+1)y_n - r_{n+1}}{m}\\& = \frac{(m+1)(mx_{n+1} + r_{n+1}) - r_{n+1}}{m} = (m+1)x_{n+1} + r_{n+1}.\end{align*}  This implies that $\left\lfloor \frac{y_{n+1}}{m+1} \right\rfloor = x_{n+1}$ and $y_{n+1} \equiv r_{n+1} \bmod{(m+1)}$, so the proposition holds for $n + 1$.  The proposition is thus proved by induction. \qed

\subsection*{Proof of Theorem \ref{aperiodic}}

Now we will show that the sequence $\{y_n(x,r)\}$ is strictly increasing, and this will be enough to prove the aperiodicity of $\{r_n(x,r)\}$.

\begin{lemma} \label{y_n increasing} If $x$ is any positive integer and $r$ is any least residue modulo $m$, then for every nonnegative integer $n$, $y_{n+1}(x,r) > y_n(x,r)$ .\end{lemma}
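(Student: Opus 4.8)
The plan is to show that $\{y_n(x,r)\}$ never falls below $m+1$, so that the recursion $y_{n+1} = \lfloor (m+1)y_n/m\rfloor$ strictly increases it at each step. Throughout, write $y_n$, $x_n$, $r_n$ for $y_n(x,r)$, $x_n(x,r)$, $r_n(x,r)$.

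First I would establish $y_n \ge m+1$ for every $n$. By Proposition \ref{y_n} we have $y_n = (m+1)x_n + r_n$ with $r_n \ge 0$, and by condition (1) in the definition of the tracking sequences the integers $x_n$ are nondecreasing, so $x_n \ge x_0 = x \ge 1$; hence $y_n \ge (m+1)\cdot 1 + 0 = m+1$. (Alternatively, a short direct induction avoids Proposition \ref{y_n}: $y_0 = (m+1)x + r \ge m+1$, and if $y_n \ge m+1$ then $y_{n+1} \ge (m+1)y_n/m - 1 \ge (m+1)^2/m - 1 = m + 1 + 1/m > m+1$.)

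Next, since $y_n$ is an integer, $(m+1)y_n/m = y_n + y_n/m$ and the integer $y_n$ pulls out of the floor, so $y_{n+1} = \lfloor (m+1)y_n/m\rfloor = y_n + \lfloor y_n/m\rfloor$. Because $y_n \ge m+1 > m$ we have $\lfloor y_n/m\rfloor \ge 1$, whence $y_{n+1} \ge y_n + 1 > y_n$, which is precisely the assertion of the lemma.

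There is no serious obstacle here: the only steps requiring a moment's care are the identity $\lfloor N + N/m\rfloor = N + \lfloor N/m\rfloor$ for an integer $N$, and the lower bound $y_n \ge m+1$, for which invoking the monotonicity of $\{x_n\}$ built into the definition is the cleanest route. Once the lemma is in hand, Theorem \ref{aperiodic} follows quickly: were $\{r_n\}$ eventually periodic, a tail of $\{r_n(x,r)\}$ would coincide with $\{r_n(\bar x,\bar r)\}$ for two distinct indices, and Corollary \ref{tail story} would then force $y_i = y_j$ for some $i < j$, contradicting the strict monotonicity just established.
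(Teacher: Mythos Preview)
Your proof is correct and follows essentially the same route as the paper: use $x_n \ge x_0 = x \ge 1$ (together with $y_n = (m+1)x_n + r_n$) to get $y_n \ge m+1$, then rewrite $\left\lfloor (m+1)y_n/m \right\rfloor$ as $y_n + \lfloor y_n/m\rfloor \ge y_n + 1$. Your parenthetical direct induction and the closing remarks on Theorem~\ref{aperiodic} are extras, but they are accurate and match how the paper deploys the lemma.
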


\begin{proof}  Let $n$ be a nonnegative integer.  Observe that $x_n \geq x_{n-1} \geq \cdots \geq x_0 = x \geq 1$.  Thus $y_n = (m+1)x_n + r_n \geq m + 1$, so $y_{n+1} = \left\lfloor \frac{(m+1)y_n}{m} \right\rfloor = y_n + \left\lfloor \frac{y_n}{m} \right\rfloor \geq y_n + 1$.  Thus $y_{n+1} > y_n$. \end{proof}

Now we offer a proof of Theorem \ref{aperiodic}.

Let $x$ be an integer, and let $r$ be some least residue modulo $m$.  From Theorem \ref{m-adic sum}, we get that $m \cdot y_0(x,r) = \sum_{k=1}^\infty \left(\frac{m}{m+1}\right)^k r_k(x,r)$ in $\mathbb{Q}_m$.  Suppose $\{r_n(x,r)\}$ is periodic, so that there exists a positive integer $p$ such that $r_{n+p}(x,r) = r_n(x,r)$ for every $n$.  Corollary \ref{tail story} implies that $y_0(x_p(x,r),r_p(x,r)) = y_0(x,r)$.  Now, by Proposition \ref{y_n}, \begin{equation*}y_p(x,r) = (m+1)x_p(x,r) + r_p(x,r) = y_0(x_p(x,r),r_p(x,r)) = y_0(x,r),\end{equation*} but by Lemma \ref{y_n increasing}, $y_p(x,r) > y_{p-1}(x,r) > y_{p-2}(x,r) > \cdots > y_0(x,r)$.  This is a contradiction.  Therefore, $\{r_n(x,r)\}$ is aperiodic.  \qed

Note that the same argument can be used to show that $\{r_n(x,r)\}$ has no periodic tail.

\section{Other sequences and arrays}

Up until now, we have only been proving propositions about the location of an individual in the triangle, i.e. the position of a person standing in a rotating line (to which more people are continually appended).  We still need to find ways to calculate other quantities of interest, namely, \begin{itemize}
\item the frequency with which 1 is at the head of a row,
\item the frequency with which a new number appears in the array, and
\item the frequency with which a certain number appears in a row. \end{itemize} To do this, we introduce the following notation:

\begin{definition} We define $\{h_m(x): x \in \mathbb{N}\}$ to be the sequence of heads of rows in the triangle.  We define $\{l_m(n): n \in \mathbb{N}\}$ to be the sequence of rows in which the 1 leads.  Finally, we define $\{a_m(n) : n \in \mathbb{N}\}$ to be the sequence of rows in which a new number is added to the triangle.  More compactly,
\begin{itemize}
\item $h_m(x) = T_m(x,0)$,
\item $l_m(n) = \min \{x > l_m(n-1) : h_m(x) = 1\}$ (with $l_m(1) = 1$), and
\item $a_m(n) = \min \{x \in \mathbb{N} : n = T_m(x,x-1)\}$.
\end{itemize}
\end{definition}

For the case where $m = 1$, these sequences are all easily recognizable, with the possible exception of $\{h_1(x)\}$, which is known as the bit-counting sequence (Sloane's \href{http://www.research.att.com/~njas/sequences/A000120}{A000120}).  Here are the first few rows of $T_1(x,r)$:

\[\begin{array}{cccccccccc}
1 & & & & & & & & &\\
1 & 2 & & & & & & & &\\
2 & 1 & 2 & & & & & & &\\
1 & 2 & 2 & 3 & & & & & &\\
2 & 2 & 3 & 1 & 2 & & & & &\\
2 & 3 & 1 & 2 & 2 & 3 & & & &\\
3 & 1 & 2 & 2 & 3 & 2 & 3 & & &\\
1 & 2 & 2 & 3 & 2 & 3 & 3 & 4 & &\\
2 & 2 & 3 & 2 & 3 & 3 & 4 & 1 & 2 &\\
2 & 3 & 2 & 3 & 3 & 4 & 1 & 2 & 2 & 3
\end{array}\]

\begin{proposition} \label{binary strings} $h_1(x)$ is the number of 1's in the binary representation of $x$. \end{proposition}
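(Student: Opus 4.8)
The plan is to show that $h_1$ obeys the recursion $h_1(2k)=h_1(k)$ and $h_1(2k+1)=h_1(k)+1$ for every integer $k\ge 1$, since the binary-weight function satisfies exactly this recursion with the same initial value $h_1(1)=T_1(1,0)=1$; the proposition then follows by an immediate strong induction on $x$.

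To obtain the recursion I would trace the head of row $x$ backwards through the triangle. Using the defining relation $T_1(x,r)=T_1(x-1,r+1)$, which is valid whenever $x>1$ and $0\le r\le x-2$, I would prove by induction on $j$ that $T_1(x,0)=T_1(x-j,j)$ for every $j$ with $0\le j\le\lfloor x/2\rfloor$: in the step from $j$ to $j+1$ one applies the relation at row $x-j$ and column $j$, and the hypothesis $j<\lfloor x/2\rfloor$ is precisely what guarantees $j\le (x-j)-2$ (and $x-j>1$), so each substitution is legitimate. Specializing to $j=\lfloor x/2\rfloor$ then splits into two cases. If $x=2k$, then $T_1(x,0)=T_1(k,k)$, and since $k\equiv 0\bmod{k}$ the convention on $T_m(x,r)$ for $r\ge x$ gives $T_1(k,k)=T_1(k,0)=h_1(k)$, so $h_1(2k)=h_1(k)$. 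If $x=2k+1$, then $T_1(x,0)=T_1(k+1,k)$, and because $k=(k+1)-1$ is the last column of row $k+1$, the appending rule gives $T_1(k+1,k)=1+T_1(k,0)=1+h_1(k)$, so $h_1(2k+1)=h_1(k)+1$.

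Writing $b(x)$ for the number of $1$'s in the binary expansion of $x$, one has $b(1)=1$, $b(2k)=b(k)$, and $b(2k+1)=b(k)+1$ (appending a binary digit $0$ or $1$ to $k$); combining these with the recursion for $h_1$ just established and strong induction on $x$ (noting $k<x$ in both cases, and that the recursions are only invoked for $x\ge 2$) yields $h_1(x)=b(x)$. I expect the only delicate point to be the bookkeeping of the inequality $0\le r\le x-2$ along the backward chain, so as to be certain the chain of substitutions reaches $T_1(k,k)$ (respectively $T_1(k+1,k)$) without ever leaving a row; once that is in hand the argument is entirely routine.
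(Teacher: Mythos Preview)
Your proposal is correct and follows essentially the same approach as the paper: both arguments establish the recursions $h_1(2k)=h_1(k)$ and $h_1(2k+1)=1+h_1(k)$ by walking along the chain $T_1(x,0)=T_1(x-1,1)=\cdots$ and then invoke strong induction together with the standard recursion for binary weight. The only cosmetic difference is direction---the paper traces forward from row $k$ up to row $2k$ (resp.\ $2k+1$), whereas you trace backward from row $x$ down to row $\lceil x/2\rceil$---and your explicit verification of the inequality $j\le(x-j)-2$ along the chain is, if anything, a bit more careful than the paper's treatment.
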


\begin{proof} To prove this by strong induction, note that the base case $x = 1$ is trivial.  Now suppose we have an even row number $x = 2y$.  Observe that 
\begin{equation*}
h_1(y) = T_1(y,0) = T_1(y+1,y-1) = T_1(y+2,y-2) = \cdots = T_1(2y - 1, 1) = T_1(2y, 0) = h_1(2y).
\end{equation*}
Assuming $h_1(y)$ is the number of 1's in the binary representation of $y$, it will also be the number of 1's in the binary representation of $2y$, since the binary representation of $2y$ is the same as that of $y$, but with a zero appended.  Thus if the row number $x$ is even, then $h_1(x)$ is the number of 1's in the binary representation of $x$.

Now suppose we have an odd row number $x = 2y + 1$.  Observe that \begin{equation*}T_1(y+1,y) = T_1(y+2,y-1) = \cdots = T_1(2y,1) = T_1(2y+1,0) = h_1(2y+1).\end{equation*}  Thus $h_1(2y+1) = T_1(y+1,y) = 1 + h_1(y)$.  Note that $h_1(y) = h_1(2y)$ by our previous work.  Assuming $h_1(2y)$ is the number of 1's in the binary representation of $2y$, then $h_1(2y+1) = 1 + h_1(2y)$ is the number of 1's in the binary representation of $2y + 1$, since its binary representation is the same as that of $2y$ with the last digit changed from a 0 to a 1.  Thus if the row number $x$ is odd, then $h_1(x)$ is the number of 1's in the binary representation of $x$.

By the principle of mathematical induction, strong form, the proposition holds for all row numbers. \end{proof}

\begin{proposition} \label{leading 1's m = 1} $l_1(n) = a_1(n) = 2^{n-1}$. \end{proposition}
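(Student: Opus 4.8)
The plan is to reduce both equalities to Proposition~\ref{binary strings}, which identifies $h_1(x)$ with the number of ones in the binary expansion of $x$.

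First I would handle $l_1(n)$. By Proposition~\ref{binary strings}, $h_1(x) = 1$ holds exactly when the binary representation of $x$ contains a single $1$, i.e.\ exactly when $x$ is a power of $2$. Hence $\{x \in \mathbb{N} : h_1(x) = 1\} = \{2^0, 2^1, 2^2, \ldots\}$. Since $l_1(n)$ was defined to pick out the $n$-th smallest element of this set (with $l_1(1) = 1 = 2^0$), a one-line induction on $n$ gives $l_1(n) = 2^{n-1}$.

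Next I would treat $a_1(n)$. From the defining recursion of the triangle, for $x > 1$ we have $T_1(x, x-1) = 1 + T_1(x-1, 0) = 1 + h_1(x-1)$, while $T_1(1,0) = 1$ directly. Thus for $n = 1$ the equation $n = T_1(x,x-1)$ holds at $x = 1$, so $a_1(1) = 1 = 2^0$, while for $n \ge 2$ the equation is equivalent to $x > 1$ and $h_1(x-1) = n-1$, so $a_1(n) = 1 + \min\{y \in \mathbb{N} : h_1(y) = n-1\}$. By Proposition~\ref{binary strings} again, $h_1(y) = n-1$ means $y$ has exactly $n-1$ binary ones; the least such $y$ is the number whose binary expansion is a block of $n-1$ consecutive $1$'s, namely $2^{n-1} - 1$, since any positive integer with $n-1$ binary ones is at least $1 + 2 + \cdots + 2^{n-2} = 2^{n-1}-1$. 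Therefore $a_1(n) = 2^{n-1}$, matching $l_1(n)$.

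The only point requiring care is the bookkeeping at the small end: checking the $n = 1$ cases against the direct value $T_1(1,0) = 1$ (the recursion $T_1(x,x-1) = 1 + T_1(x-1,0)$ being stated only for $x > 1$), and justifying that $2^{n-1}-1$ is the least integer with $n-1$ binary ones. Neither is a genuine obstacle; the substantive content is carried entirely by Proposition~\ref{binary strings}.
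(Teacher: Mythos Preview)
Your proof is correct, but it takes a genuinely different route from the paper's. You reduce both equalities to Proposition~\ref{binary strings}: for $l_1$, you observe that $h_1(x)=1$ characterizes powers of $2$; for $a_1$, you rewrite $T_1(x,x-1)=1+h_1(x-1)$ and identify the least integer with $n-1$ binary ones. The paper, by contrast, never invokes Proposition~\ref{binary strings} here. It runs a direct induction on $n$, tracing the specific element $1$ (and the element $k$) through successive rotations: from $T_1(2^{k-1},0)$ diagonally down to $T_1(2^k,0)$, and from $T_1(2^{k-1},2^{k-1}-1)$ down to $T_1(2^k-1,0)$, which then spawns $k+1$ in row $2^k$.

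Your approach is cleaner and more conceptual: once the bit-counting characterization is in hand, both formulas become almost immediate, and the minimality claims (``the $n$-th power of $2$'' and ``the least integer with $n-1$ ones'') are transparent. The paper's approach is more hands-on and self-contained with respect to the triangle's mechanics; it makes explicit why no \emph{earlier} row can have $1$ at the head or introduce $k+1$, by literally following the element's column position row by row---a point you get for free from the global description of $h_1$. Since Proposition~\ref{binary strings} is already proved immediately before, your reduction is the more economical choice.
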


\begin{proof}  Clearly $l_1(1) = a_1(1) = 1$, so the proposition holds for $n = 1$.  Now suppose the proposition holds for $n = k$.  Then $l_1(k) = a_1(k) = 2^{k-1}$.  Therefore, $T_1(2^{k-1},0) = 1$ and $T_1(2^{k-1},2^{k-1}-1) = k$.  Observe that \begin{equation*} 1 = T_1(2^{k-1},0) = T_1(2^{k-1}+1,2^{k-1}-1) = T_1(2^{k-1}+2,2^{k-1}-2) = \cdots = T_1(2^k, 0) \end{equation*} and \begin{equation*} k = T_1(2^{k-1},2^{k-1}-1) = T_1(2^{k-1}+1,2^{k-1}-2) = \cdots T_1(2^k-1,0),\end{equation*} which also implies that $T_1(2^k,2^k - 1) = k + 1$.  Also, it is clear that through these calculations we have found the nearest instances of 1 and $k$ at the head of a row beyond row $2^{k-1}$.  Thus $l_1(k+1) = a_1(k+1) = 2^k$.  By induction, the proposition holds for every positive integer $n$. \end{proof}

To finish off what we know in the case $m = 1$, we introduce two more arrays.

\begin{definition} Let $F_m(n,k)$ denote the number of $k$'s in row $n$, where $m$ is the rotation number.  Then the function $F_m(n,k)$ gives a \emph{frequency triangle} that displays the number of each kind of element as it appears. \end{definition}

As an example, here are the first few rows of $F_1(n,k)$:

\[\begin{array}{cccccccccc}
1 & & & & & & & & &\\
1 & 1 & & & & & & & &\\
1 & 2 & & & & & & & &\\
1 & 2 & 1 & & & & & & &\\
1 & 3 & 1 & & & & & & &\\
1 & 3 & 2 & & & & & & &\\
1 & 3 & 3 & 1 & & & & & &\\
1 & 4 & 3 & 1 & & & & & &\\
1 & 4 & 4 & 1 & & & & & &\\
1 & 4 & 5 & 1 & & & & & &\\
\end{array}\]

\begin{definition} Let $f_m(n,k)$ denote the number of $k$'s in the row when $n$ first appears in the triangle, where $m$ is the rotation number.  Then the function $f_m(n,k)$ gives a \emph{reduced frequency triangle} that displays the number of each kind of element as it appears only in those rows that introduce a new element.  That is, $f_m(n,k) = F_m(a_m(n),k)$. \end{definition}

As an example, here are the first few rows of $f_1(n,k)$:

\[\begin{array}{cccccccccc}
1 & & & & & & & & &\\
1 & 1 & & & & & & & &\\
1 & 2 & 1 & & & & & & &\\
1 & 3 & 3 & 1 & & & & & &\\
1 & 4 & 6 & 4 & 1 & & & & &\\
1 & 5 & 10 & 10 & 5 & 1 & & & &\\
1 & 6 & 15 & 20 & 15 & 6 & 1 & & &\\
1 & 7 & 21 & 35 & 35 & 21 & 7 & 1 & &\\
1 & 8 & 28 & 56 & 70 & 56 & 28 & 8 & 1 &\\
1 & 9 & 36 & 84 & 126 & 126 & 84 & 36 & 9 & 1\\
\end{array}\]

This is Pascal's triangle, as is shown by the following proposition.

\begin{proposition} $f_1(n,k) = {n - 1 \choose k - 1}$. \end{proposition}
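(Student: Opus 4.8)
The plan is to prove the formula by induction on $n$, exploiting the two facts just established: $a_1(n) = 2^{n-1}$ (Proposition \ref{leading 1's m = 1}), so that by definition $f_1(n,k) = F_1(2^{n-1},k)$; and $h_1(x)$ is the number of $1$'s in the binary expansion of $x$ (Proposition \ref{binary strings}). The base case $n = 1$ is immediate: row $1$ of $T_1$ is the single entry $1$, so $f_1(1,k) = F_1(1,k)$ equals $1$ for $k = 1$ and $0$ otherwise, which matches $\binom{0}{k-1}$.

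The key structural observation is that passing from one row of $T_m$ to the next simply adjoins a single new entry to the multiset of entries, namely $1$ plus the head of the old row. Indeed, the relation $T_m(x,r) = T_m(x-1,r+m)$ for $0 \le r \le x-2$ shows that the list $T_m(x,0),\dots,T_m(x,x-2)$ is obtained from $T_m(x-1,0),\dots,T_m(x-1,x-2)$ by the substitution $r \mapsto r+m \bmod (x-1)$, which is a bijection of $\mathbb{Z}/(x-1)\mathbb{Z}$; hence the first $x-1$ entries of row $x$ form a permutation of the entries of row $x-1$. The one remaining entry is $T_m(x,x-1) = 1 + T_m(x-1,0) = 1 + h_m(x-1)$. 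Therefore the multiset of entries of row $x$ is that of row $x-1$ together with one extra copy of $1 + h_m(x-1)$. Summing this observation over $x$ from $2^{n-1}+1$ to $2^n$, we find that $F_1(2^n,k) - F_1(2^{n-1},k)$ equals the number of integers $x$ with $2^{n-1} \le x \le 2^n - 1$ and $h_1(x) = k-1$.

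Next I would evaluate that counting term. By Proposition \ref{binary strings} it equals $\#\{\, x : 2^{n-1} \le x \le 2^n - 1,\ (\text{binary digit sum of }x) = k-1 \,\}$. The integers in the range $[2^{n-1}, 2^n - 1]$ are exactly those whose binary expansion has $n$ digits with leading digit $1$; such an $x$ has digit sum $k-1$ precisely when its remaining $n-1$ digits contain exactly $k-2$ ones, and there are $\binom{n-1}{k-2}$ such digit strings. Assuming inductively that $F_1(2^{n-1},k) = \binom{n-1}{k-1}$ for all $k$, we obtain
\[
f_1(n+1,k) = F_1(2^n,k) = \binom{n-1}{k-1} + \binom{n-1}{k-2} = \binom{n}{k-1},
\]
by Pascal's rule, which closes the induction.

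The only genuinely delicate point is the structural observation in the second paragraph — that each new row of $T_m$ is, as a multiset, the previous row with one extra entry equal to $1 + h_m(x-1)$; once that is in place, the rest is a routine binomial count followed by an application of Pascal's identity. One should also note that the usual edge conventions on binomial coefficients (so that $\binom{n-1}{k-2} = 0$ for $k = 1$ and for $k > n$) make the identity hold at both ends of each row with no separate cases, and as a consistency check $\sum_k \binom{n-1}{k-2} = 2^{n-1}$, matching the number of rows strictly between $2^{n-1}$ and $2^n$ at which a new entry is appended.
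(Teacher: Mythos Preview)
Your proof is correct, but it takes a genuinely different route from the paper's. The paper proceeds directly rather than by induction: it observes that for $m=1$ the entries of row $2^{n-1}$ are, in order, exactly the heads of rows $2^{n-1}$ through $2^n - 1$, i.e.\ $T_1(2^{n-1},r) = h_1(2^{n-1}+r)$ for $0 \le r \le 2^{n-1}-1$ (seen by tracing down the diagonal). Hence $f_1(n,k)$ is simply the number of $n$-bit integers with leading bit $1$ and digit sum $k$, which is $\binom{n-1}{k-1}$ in one stroke. Your argument instead isolates the incremental multiset structure (row $x$ is row $x-1$ as a multiset together with one extra copy of $1+h_m(x-1)$), aggregates the increments between rows $2^{n-1}$ and $2^n$, and closes with Pascal's identity. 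What you gain is generality and transparency: the multiset observation holds for every $m$, and it explains \emph{why} the Pascal recurrence shows up. What the paper gains is brevity: the diagonal identity is special to $m=1$ but delivers the binomial count without any induction.
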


\begin{proof}  By Proposition \ref{leading 1's m = 1}, $f_1(n,k)$ is the number of times $k$ appears in row $2^{n-1}$.  Note that the elements in row $2^{n-1}$ are the same as the leading elements of rows $2^{n-1}$ through $2^n - 1$; that is, $T_1(2^{n-1},r) = h(2^{n-1} + r)$ (one can simply trace diagonally to see this).  By Proposition \ref{binary strings}, $T_1(2^{n-1},r)$, $0 \leq r \leq 2^{n-1} - 1$, represents the number of 1's in the binary representation of $2^{n-1} + r$.  These $2^{n-1}$ binary representations constitute all the $n$-digit binary strings that begin with a 1.  Thus we see that $f_1(n,k)$ is the number of ways to choose $k - 1$ of the remaining $n - 1$ digits to be 1's, which is given by ${n - 1 \choose k - 1}$. \end{proof}

This result means that for $m = 1$, the reduced frequency triangle behaves very nicely: it is well known that Pascal's triangle is, among other things, unimodal and symmetrical.  In general, this does not hold for reduced frequency triangles.  The reduced frequency triangle for $m = 2$ is asymmetrical, and for $m = 6$ it even fails unimodality fairly quickly.  However, for the $m = 2$ case we will attempt to say something about frequencies.  That is saved for the next section, which will cover lots of concrete results for $m = 2$.

\section{The case $m = 2$}\label{m2case}

\subsection*{Proof of Theorem \ref{m equals 2}}

The first result for the case $m = 2$ is Theorem \ref{m equals 2}, which we will now prove quickly.  Suppose, in order to obtain a contradiction, that a certain element of $T_2$ reaches the first column only finitely many times.  Then there exist $x$ and $r$ such that $\{r_n(x,r)\}$ has a tail containing no 0's.  Since $m = 2$, each term $r_n(x,r)$ must be either 0 or 1.  By Theorem \ref{aperiodic}, there can be no periodic tail in $\{r_n(1,0)\}$, so there can be no tail of all 1's.  This is a contradiction.  Thus every element in $T_2$ reaches the first column infinitely many times. \qed

The following theorem is a corollary of Theorem \ref{m equals 2}.

\begin{theorem} \label{a_2 infinite} The sequence $\{a_2(n):n \in \mathbb{N}\}$ is infinite, i.e. every positive integer appears in the triangle $T_2$. \end{theorem}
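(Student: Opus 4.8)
The plan is to prove the statement by induction on $n$, using Theorem \ref{m equals 2} to pass from ``$n$ occurs in $T_2$'' to ``$n+1$ occurs in $T_2$,'' and reading off finiteness of $a_2(n)$ along the way. The whole argument rests on the observation that the only genuinely new entry in row $x$ (as opposed to a rotation of an entry of the previous row) is the appended value $T_2(x,x-1) = 1 + T_2(x-1,0)$, so a positive integer $v$ appears in $T_2$ precisely when some row has head $v-1$ (together with the trivial case $v = 1$).

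First I would dispose of the base case: since $T_2(1,0) = 1$ and $1-1 = 0$, we have $1 = T_2(1,0) = T_2(1,1-1)$, so $1$ occurs as an appended element and $a_2(1) = 1$. For the inductive step, suppose $n$ appears somewhere in $T_2$, say $T_2(x_0,r_0) = n$. By Theorem \ref{m equals 2} this element reaches the head of infinitely many rows; in particular there is at least one row $x \geq 1$ with $h_2(x) = T_2(x,0) = n$. The defining recursion then gives $T_2(x+1,(x+1)-1) = 1 + T_2(x,0) = n+1$, so $n+1$ occurs in $T_2$ as an appended value; hence $\{x \in \mathbb{N} : n+1 = T_2(x,x-1)\}$ is nonempty and $a_2(n+1) \leq x+1 < \infty$. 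By induction every positive integer appears in $T_2$, $a_2(n)$ is a well-defined finite positive integer for every $n$, and so the sequence $\{a_2(n) : n \in \mathbb{N}\}$ has infinitely many terms.

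I do not expect a real obstacle here: the content is already packaged in Theorem \ref{m equals 2} plus the recursion $T_m(x,x-1) = 1 + T_m(x-1,0)$. The one point that deserves care is the reading of Theorem \ref{m equals 2}: it must be understood as asserting that \emph{any} value occurring anywhere in $T_2$ reaches the head of infinitely many rows (which is exactly how it was proved, by tracking an arbitrary element of the triangle down to column $0$). Granting that, the inductive mechanism is immediate, and it is worth remarking that it gives slightly more than the bare statement, namely the explicit bound $a_2(n+1) \leq a_2(n) + (\text{first row after } a_2(n) \text{ led by } n) + 1$, though I would not pursue a sharp form of this here.
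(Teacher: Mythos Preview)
Your proposal is correct and follows essentially the same approach as the paper: induct on $n$, use Theorem~\ref{m equals 2} to get $n$ to the head of some row, and then the recursion $T_2(x+1,x)=1+T_2(x,0)$ produces $n+1$. Your added remark about how Theorem~\ref{m equals 2} must be read (tracking an arbitrary element) is exactly how the paper proves and uses it, so there is no gap.
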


\begin{proof} To see that $\{a_2(n)\}$ is infinite, observe that if $n$ appears in the triangle, then by Theorem \ref{m equals 2}, $n$ will also lead some row in the triangle, which will spawn an $n+1$ in the next row.  Since 1 obviously appears in the triangle, it follows by induction that every integer appears in the triangle. \end{proof}

\subsection*{The growth rate of $l_2(n)$}

Theorem \ref{m equals 2} implies that $\{l_2(n)\}$ is an infinite sequence.  Recall that $\{l_1(n)\}$ grows exponentially as $\{2^{n-1}\}$; thus $l(n+1) = 2l(n)$.  We cannot find so simple recurrence relation for $\{l_2(n)\}$.  However, we do find a kind of recurrence if we look at a ``negative binary" representation of integers $n$.

\subsection*{Negative binary expansion}

\begin{definition} \label{negative binary expansion} Let $x$ be an integer.  The negative binary expansion of $x$ is the unique sequence of 0's and 1's $(b_0, b_1, b_2, \ldots, b_n)$, $b_n = 1$, such that $x = b_0(-2)^0 + b_1(-2)^1 + \cdots + b_n(-2)^n$. \end{definition}

The existence and uniqueness of this representation can be proven with no more difficulty than in the more familiar case where the base is positive.

\begin{align*}
43 &= (1111111)_{-2}\\
500 &= (11000110100)_{-2}\\
999 &= (10000111011)_{-2}.
\end{align*}

Curiously, our recurrence relation has to do with the how many 1's are at the tail of such a representation.  It turns out that if $n$ leads row $x$, then the number of 1's in the tail of $x$'s negative binary representation determines how many times $n$ will come back to the second column before it finally comes back to the first column.  Thus if $n$ leads an even row, then the very next time it comes back to the first two columns, it will lead that row.  On the other hand, if $n$ leads row 43, for example, then $n$ will loop back to the second column 7 times before finally coming back to the lead.  On the other hand, if $n$ leads row 999, it only loops back to the second column twice before then advancing to the lead.  It isn't the size of the number so much as it is the size of the tail of 1's in this exotic expansion of the number.

\begin{proposition} \label{s and k} Let $x$ be a positive integer.  Then $x$ may be written in the form $x = 2^{k+1}s + \frac{1 - (-2)^k}{3}$ for some nonnegative integer $k$ and integer $s$. \end{proposition}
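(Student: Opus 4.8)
The plan is to read off $k$ and $s$ directly from the negative binary expansion of $x$ just defined, exploiting the fact that the ``constant term'' $\frac{1-(-2)^k}{3}$ is precisely the integer whose base $-2$ representation is a solid block of $k$ ones. The first thing I would record is the elementary identity
\[ \frac{1-(-2)^k}{3} = \sum_{i=0}^{k-1}(-2)^i, \]
which is just the finite geometric series with ratio $-2$. This reduces the proposition to producing, for each positive integer $x$, a $k \ge 0$ and an integer $s$ with $x = (-2)^{k+1}s' + \sum_{i=0}^{k-1}(-2)^i$ where $s'$ is an integer (the passage from $(-2)^{k+1}$ to $2^{k+1}$ only changes the sign of the coefficient, which is harmless).

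Next I would take the negative binary expansion of $x$, written $x = \sum_{i\ge 0} b_i(-2)^i$ with each $b_i \in \{0,1\}$ and all but finitely many $b_i = 0$ (padding with leading zeros). Let $k$ be the least index with $b_k = 0$; such an index exists because only finitely many digits are nonzero, and by minimality $b_0 = b_1 = \cdots = b_{k-1} = 1$. Splitting the expansion at position $k$ and using the identity above on the initial block gives
\[ x = \sum_{i=0}^{k-1}(-2)^i + (-2)^k\sum_{j\ge 0} b_{k+j}(-2)^j = \frac{1-(-2)^k}{3} + (-2)^k\sum_{j\ge 0} b_{k+j}(-2)^j. \]
In the remaining sum the $j=0$ term is $b_k = 0$, so $\sum_{j\ge 0} b_{k+j}(-2)^j = (-2)t$ where $t = \sum_{i\ge 0} b_{k+1+i}(-2)^i$ is an integer. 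Hence $x = \frac{1-(-2)^k}{3} + (-2)^{k+1}t = 2^{k+1}\big((-1)^{k+1}t\big) + \frac{1-(-2)^k}{3}$, and setting $s = (-1)^{k+1}t$ gives the desired representation with $k \ge 0$ and $s \in \mathbb{Z}$.

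This is essentially bookkeeping with the base $-2$ digits, so I do not expect a serious obstacle; the only point to watch is the case where the expansion of $x$ consists of ones with no internal zero (for instance $43 = (1111111)_{-2}$), but allowing leading zeros handles this uniformly, since then the ``least index with $b_k = 0$'' is simply one past the leading digit and the resulting $t$, hence $s$, is $0$. I would also remark in passing that the $k$ produced this way is exactly the length of the trailing run of $1$'s in the negative binary expansion of $x$, which is the quantity that the surrounding discussion uses, although the statement as written only asserts existence of some such $k$.
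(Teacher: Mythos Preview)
Your proof is correct and follows essentially the same approach as the paper: both read $k$ off as the length of the trailing block of $1$'s in the negative binary expansion and use the geometric-series identity $\sum_{i=0}^{k-1}(-2)^i = \frac{1-(-2)^k}{3}$ to identify the constant term. Your version handles the even case ($k=0$) uniformly rather than splitting it off, and makes the passage from $(-2)^{k+1}$ to $2^{k+1}$ via $s=(-1)^{k+1}t$ explicit, but the underlying argument is the same.
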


\begin{proof} Let $(b_0,b_1,\ldots,b_n)$ be the negative binary expansion of $x$.  If $x$ is even, we write $x = 2s$, which is the desired form with $k = 0$.  If $x$ is odd, then $b_0 = 1$, and there must be some $k$ such that either $b_k = 0$ or $k = n + 1$ (i.e $x$ only has $k$ digits in its negative binary expansion).  In either case, there exists some integer $s$ such that $x = 2^{k+1}s + (-2)^0 + (-2)^1 + \cdots + (-2)^{k-1} = 2^{k+1}s + \frac{1 - (-2)^k}{3}$, as desired. \end{proof}

The essence of Proposition \ref{s and k} is that every negative binary representation does, in fact, have a tail of 1's.  How does one find $s$ and $k$, given an integer $x$?  Suppose we write $x = b_0(-2)^0 + b_1(-2)^1 + \cdots + b_n(-2)^n$.  Observe that $b_0$ is determined by whether $x$ is even or odd: if $x$ is even, $b_0 = 0$, and if $x$ is odd, $b_0 = 1$.  So it is easy to determine $b_0$, and we can then extract it from $x$.  Let $x^{(1)} = \frac{x - b_0}{-2} = b_1(-2)^0 + b_2(-2)^1 + \cdots + b_n(-2)^{n-1}$.  Now if $x^{(1)}$ is even, $b_1 = 0$, and if $x^{(1)}$ is odd, then $b_1 = 1$.  Thus $b_1$ is easily determined from $x^{(1)}$.  Now let $x^{(2)} = \frac{x^{(1)} - b_0}{-2}$, and so on in like fashion, until at last we have $x^{(n)} = b_n = 1$.  Note that we did not have to know what $n$ was; we simply repeated the algorithm until we got a 1 in our sequence of $x^{(i)}$'s.

Here is an example.  Let $x = 9$.  Since $x$ is odd, $b_0 = 1$.  Now let $x^{(1)} = \frac{9 - 1}{-2} = -4$.  Since $x^{(1)}$ is even, $b_1 = 0$.  Now let $x^{(2)} = \frac{-4 - 0}{-2} = 2$.  Since $x^{(2)}$ is even, $b_2 = 0$.  Now let $x^{(3)} = \frac{2 - 0}{-2} = -1$.  Since $x^{(3)}$ is odd, $b_3 = 1$.  Now let $x^{(4)} = \frac{-1 - 1}{-2} = 1$.  Since $x^{(4)} = 1$, we finish with $b_4 = 1$.  Putting it all together, we get $9 = 11001_{-2}$.  It is easy to check that $9 = 1(-2)^0 + 0(-2)^1 + 0(-2)^2 + 1(-2)^3 + 1(-2)^4$.  We now demonstrate how to use this representation to get from $l_2(n)$ to $l_2(n+1)$.

\begin{proposition} \label{l_2 recurrence} If $l_2(n) = 2^{k+1}s + \frac{1 - (-2)^k}{3}$ for some nonnegative integer $k$ and integer $s$, then $l_2(n+1) = 3^{k+1}s + \frac{1 - (-3)^k}{2}$. \end{proposition}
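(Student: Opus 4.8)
Our strategy is to track the number $1$ through $T_2$ and reduce the claim to an explicit iteration coming from the recurrence \eqref{recurrence}. First note that every occurrence of the value $1$ in $T_2$ is a rotated copy of $T_2(1,0)$: the appending rule $T_m(x,x-1) = 1 + T_m(x-1,0)$ only ever produces values $\geq 2$, so no new $1$ is ever introduced. Consequently $T_2(w,0) = 1$ holds exactly when $w = x_i(1,0)$ for some $i$ with $r_i(1,0) = 0$, so if $l_2(n) = x$ there is an index $j$ with $x_j(1,0) = x$ and $r_j(1,0) = 0$, and moreover $l_2(n+1) = x_{j'}(1,0)$, where $j' > j$ is least with $r_{j'}(1,0) = 0$ (such $j'$ exists because $l_2(n+1)$ is defined). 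Put $z_i := y_{j+i}(1,0)$. By Proposition \ref{y_n}, $z_0 = 3x$; by the definition of $\{y_n\}$ with $m=2$, $z_{i+1} = \lfloor 3z_i/2 \rfloor$; and, again by Proposition \ref{y_n}, $l_2(n+1) = z_K/3$ where $K := j'-j$, which — since $r_{j+i}(1,0) = 1$ (hence $3 \nmid z_i$) for $j < j+i < j'$ — is the least positive index with $3 \mid z_K$. The whole problem is thus reduced to iterating $z \mapsto \lfloor 3z/2 \rfloor$ from $z_0 = 3x$ until the first multiple of $3$ returns.

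The crux is to establish, by induction on $i$, the explicit formula
\[ z_i = 3^{i+1}\, 2^{k+1-i}\, s + 1 - (-2)^{k-i}(-3)^i, \qquad 0 \le i \le k, \]
where $x = 2^{k+1}s + \tfrac{1-(-2)^k}{3}$ is the given representation. The case $i = 0$ is just $z_0 = 3x$. For the step from $i$ to $i+1$ with $i < k$: in the formula the exponents $k+1-i$ and $k-i$ are both at least $1$, so both $3^{i+1}2^{k+1-i}s$ and $(-2)^{k-i}(-3)^i$ are even, hence $z_i$ is odd and $z_{i+1} = (3z_i-1)/2$; performing the division — using $(-2)^{k-i}/2 = -(-2)^{k-i-1}$ and $3(-3)^i = -(-3)^{i+1}$ — sends the constant $1 - (-2)^{k-i}(-3)^i$ to $1 - (-2)^{k-i-1}(-3)^{i+1}$ and the coefficient $3^{i+1}2^{k+1-i}$ to $3^{i+2}2^{k-i}$, which is precisely the formula with index $i+1$. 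Finding this closed form, and noticing that its point is that the power of $2$ in the constant term (and hence the parity of $z_i$, which governs whether a floor actually happens) drops by one at each step until it runs out at $i=k$, is the main obstacle; after that the computation is mechanical.

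It remains to read off the answer. For $1 \le i \le k$ the constant term satisfies $1 - (-2)^{k-i}(-3)^i \equiv 1 \pmod 3$, so $3 \nmid z_i$ and therefore $K \ge k+1$. At $i=k$ the formula gives $z_k = 2\cdot 3^{k+1} s + 1 - (-3)^k$, whose constant term is even (odd minus odd); hence $z_k$ is even, $z_{k+1} = 3z_k/2 = 3^{k+2} s + 3\cdot\tfrac{1-(-3)^k}{2}$, and this is a multiple of $3$ since $\tfrac{1-(-3)^k}{2}$ is an integer. So $K = k+1$ and $l_2(n+1) = z_{k+1}/3 = 3^{k+1}s + \tfrac{1-(-3)^k}{2}$, as claimed. (The case $k=0$ falls out of the same computation: then $x = 2s$, $z_0 = 6s$ is already even, $z_1 = 9s$, and $l_2(n+1) = 3s$.)
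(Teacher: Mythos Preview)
Your proof is correct and follows essentially the same approach as the paper's: both track the element sitting at position $(l_2(n),0)$ through the recurrence until it next returns to column $0$, computing the intermediate positions explicitly. You work with the encoding $z_i = y_{j+i} = 3x_{j+i} + r_{j+i}$, whereas the paper tracks $x_i$ and $r_i$ separately, but the two computations are line-for-line equivalent (your closed form $z_i = 3^{i+1}2^{k+1-i}s + 1 - (-2)^{k-i}(-3)^i$ is exactly $3x_i + r_i$ with the paper's $x_i = 3^i 2^{k+1-i}s + (-3)^{i-1}(-2)^{k-i}$ and $r_i = 1$). Your write-up is in fact a bit more careful than the paper's on two points: you justify why the next return to column $0$ really is $l_2(n+1)$ (no new $1$ is ever appended), and your closed form handles $k=0$ cleanly, whereas the paper's intermediate expression $x_1 = 3\cdot 2^k s + (-2)^{k-1}$ is formally ill-defined there.
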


\begin{proof} Consider $x_0 = x_0(l_2(n),0) = l_2(n)$ and $r_0 = r_0(l_2(n),0) = 0$  Now observe that 

\[x_1 = \left\lfloor \frac{3x_0 + r_0}{2} \right\rfloor = \left\lfloor \frac{3l_2(n)}{2} \right\rfloor = \left\lfloor \frac{3(2^{k+1})s + 1 - (-2)^k}{2} \right\rfloor = 3(2^k)s + (-2)^{k-1},\]
and $r_1 = 1$ by Equation \eqref{recurrence}.  Now observe that if $x_i = 2j$ is even, then
\[x_{i+1} = \left\lfloor \frac{3x_i + r_i}{2} \right\rfloor = \left\lfloor \frac{3(2j) + r_i}{2} \right\rfloor = \frac{6j}{2} = 3j = (3/2)x_i,\] and $r_{i+1} = r_i$ by Equation \eqref{recurrence}.  Thus $x_2 = (3/2)x_1 = 3^2(2^{k-1})s + (-3)(-2)^{k-2},$ $x_3 = (3/2)^2 x_1 = 3^3(2^{k-2})s + (-3)^2(-2)^{k-3},$ $\ldots,$ $x_{k-1} = (3/2)^{k-2} x_1 = 3^{k-1}(2^2)s + (-3)^{k-2}(-2)$, and $x_k = (3/2)^{k-1} x_1 = 3^k (2)s + (-3)^{k-1}$, and $r_k = r_{k-1} = \cdots = r_2 = r_1 = 1$.

Then we have
\[x_{k+1} = \left\lfloor \frac{3x_k + r_k}{2} \right\rfloor = \left\lfloor \frac{3^{k+1}(2)s - (-3)^{k} + 1}{2} \right\rfloor = 3^{k+1}s + \frac{1 - (-3)^{k}}{2},\]
and $r_{k+1} = 0$ by Equation \eqref{recurrence}.  Thus after returning to the first two columns $k$ times and always hitting the second column, the 1 now returns to the first column on the $(k+1)$th trip.  Therefore, $l_2(n+1) = x_{k+1} = 3^{k+1}s + \frac{1 - (-3)^{k}}{2},$ as desired. \end{proof}

Now we can show some bounds on the growth of $\{l_2(n)\}$.

\begin{lemma} \label{k upper bound} If $j = 2^{k+1} s + \frac{1 - (-2)^k}{3} > 0$ for some integer $s$ and some nonnegative integer $k$, then $k \leq \lceil \log_2(j) \rceil + 1$, and equality holds if and only if $s = 0$.
\end{lemma}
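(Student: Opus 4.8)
The plan is to split on the value of the integer $s$, leaning on the observation that the ``tail term'' $t_k = \frac{1-(-2)^k}{3}$ is small relative to $2^k$: one checks directly from the explicit formula (it equals $\frac{1-2^k}{3}$ for even $k$ and $\frac{1+2^k}{3}$ for odd $k$) that $-2^{k-1} < t_k \le 2^k$ for every $k \ge 0$. The first step is to observe that positivity of $j = 2^{k+1}s + t_k$ forces $s \ge 0$: if $s \le -1$ then $j \le -2^{k+1} + 2^k = -2^k < 0$. So it suffices to handle $s = 0$ and $s \ge 1$ separately, and I expect these to give the equality case and the strict case respectively.

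For $s = 0$ we have $j = t_k > 0$, which forces $k$ to be odd (for even $k$, $t_k = \frac{1-2^k}{3} \le 0$), so $j = \frac{2^k+1}{3}$, equivalently $2^k = 3j - 1$. Then two one-line inequalities, $2^k + 1 \le 3 \cdot 2^{k-1}$ (which is just $k \ge 1$) and $2^k + 1 > 3 \cdot 2^{k-2}$ (which is just $2^{k-2} > -1$), squeeze $j$ into the range $2^{k-2} < j \le 2^{k-1}$, whence $\lceil \log_2 j \rceil = k-1$ and so $k = \lceil \log_2 j \rceil + 1$. For $s \ge 1$ we instead bound $j = 2^{k+1}s + t_k \ge 2^{k+1} + t_k > 2^{k+1} - 2^{k-1} = 3 \cdot 2^{k-1} > 2^{k-1}$ (with the case $k = 0$, where $j = 2s \ge 2$, checked by hand). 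Hence $\log_2 j > k-1$, i.e. $\lceil \log_2 j \rceil \ge k$, which gives $k \le \lceil \log_2 j \rceil < \lceil \log_2 j \rceil + 1$, a strict inequality. Combining the three cases proves $k \le \lceil \log_2 j \rceil + 1$ in general, with equality precisely when $s = 0$.

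The only delicate point is the interaction with the ceiling function: one must confirm that $2^{k-2} < j \le 2^{k-1}$ really does pin $\lceil \log_2 j \rceil$ to $k-1$ even at the endpoint $j = 2^{k-1}$ (it does, since then $\log_2 j$ is itself the integer $k-1$), and one must keep an eye on the small exponents $k = 0, 1$, where $2^{k-1}$ or $2^{k-2}$ is fractional so the general estimates deserve a quick separate check. Beyond that the argument is routine arithmetic with the closed form for $t_k$, and the only earlier result it invokes is Proposition \ref{s and k}, which guarantees that a representation of the stated shape exists in the first place.
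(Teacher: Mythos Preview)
Your proof is correct, and it is organized more cleanly than the paper's. The paper first disposes of $k=0$ and $k=1$ by hand, and then for $k\ge 2$ splits on the parity of $k$: for even $k$ it shows $s>0$ and $j>2^k$, while for odd $k$ it shows $s\ge 0$, then $j>2^{k-2}$, and finally that $j\le 2^{k-1}$ iff $s=0$. You instead extract once and for all the uniform bounds $-2^{k-1}<t_k\le 2^k$ on the tail term and split directly on whether $s=0$ or $s\ge 1$; this is the natural decomposition given that the equality criterion is $s=0$, and it absorbs the parity analysis and most of the small-$k$ checks into a single estimate. The underlying inequalities are the same ones the paper uses, but your packaging avoids the case proliferation.

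One small remark: your closing sentence about invoking Proposition~\ref{s and k} is unnecessary. The lemma's hypothesis already hands you $s$ and $k$; Proposition~\ref{s and k} is about the \emph{existence} of such a representation for a given $j$ and plays no role in proving the inequality itself.
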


\begin{proof} Suppose $k = 0$.  Then $j = 2s > 0$ and so $j \geq 2$; thus $\log_2(j) \geq 1 > 0$, so clearly $0 < \lceil \log_2(j) \rceil + 1$.  Now suppose $k = 1$.  Then $j = 4s + 1 > 0$ and so $j \geq 1$.  Thus $\log_2(j) \geq 0$, so $1 \leq \lceil \log_2(j) \rceil + 1$, as desired.  Note that $s = 0$ implies $j = 1$, which implies $\log_2(j) = 0$, which yields $\lceil \log_2(j) \rceil + 1 = 0 + 1 = 1$.  Moreover if $\lceil \log_2(j) \rceil + 1 = 1$, then $\log_2(j) \leq 0$, and so $j \leq 1$, which implies $s \leq 0$.  But $4s + 1 > 0$ implies that $s \geq 0$, so clearly $s = 0$.  Thus we have the desired inequality for $k = 0$ and $k = 1$, and as well we have equality if and only if $s = 0$.

For the rest of the proof we suppose that $k \geq 2$.

Suppose first that $k$ is even.  Then $(-2)^k = 2^k$, so $2^{k+1}s + \frac{1 - (-2)^k}{3} > 0$ implies that $2^{k+1}s > \frac{2^k - 1}{3} \geq 0$.  Hence $s > 0$.  Therefore,
\begin{align*}
j \geq 2^{k+1} +  \frac{1 - 2^k}{3} &= 2^{k+1} +  \frac{1 + 2^{k-1}}{3} - 2^{k-1}\\ &= 3(2^{k-1}) +  \frac{1 + 2^{k-1}}{3} > 3(2^{k-1}) > 2^k.
\end{align*}

Hence $k < \log_2(j)$, and thus $k < \lceil \log_2(j) \rceil + 1$, as desired.

Now suppose that $k$ is odd so that $(-2)^k = -2^k$.  Observe that
\begin{align*}
2^{k+1}s + 2^k &> 2^{k+1}s + \frac{2^{k+1}}{3} > 2^{k+1}s + \frac{1 + 2^k}{3} = 2^{k+1}s + \frac{1 - (-2)^k}{3} > 0.
\end{align*}
It follows that $s > -1/2$; hence $s \geq 0$.

Next, note the following:
\begin{align*}
\frac{1 - (-2)^k}{3} &= \frac{1 + 2^k}{3} = \frac{1 + 2^{k-2}}{3} - 2^{k-2} + 2^{k-1} = \frac{1 + 2^{k-2}}{3} + 2^{k-2} > 2^{k-2}.
\end{align*}
It follows from this that $j > 2^{k-2}$, which yields $k - 2 < \log_2(j)$, and thus $k \leq \lceil \log_2(j) \rceil + 1$.

Furthermore, we have
\begin{align*}
\frac{1 - (-2)^k}{3} &= \frac{1 + 2^k}{3} = \frac{1 - 2^{k-1}}{3} + 2^{k-1} \leq 2^{k-1}.
\end{align*}
From this we see that if $s = 0$, then $j \leq 2^{k-1}$.  On the other hand, if $s > 0$ then $j \geq 2^{k+1} + \frac{1 - (-2)^k}{3} > 2^{k+1}$.  It follows that $\log_2(j) \leq k - 1$ if and only if $s = 0$.  Thus $k = \lceil \log_2(j) \rceil + 1$ if and only if $s = 0$, as desired. \end{proof}

\begin{proposition} \label{l_2 upper bound} For large $n$, $l_2(n+1) < \frac{27}{8}l_2(n)^{\log_2(3)}$, and $l_2(n+1) \geq \frac{9}{4}l_2(n)^{\log_2(3)}$ whenever $l_2(n) = \frac{1 - (-2)^{k-1}}{3}$ for some positive integer $k$. \end{proposition}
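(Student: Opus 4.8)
The plan is to reduce the statement to the exact recurrence of Proposition~\ref{l_2 recurrence} and then carry out elementary estimates, the one structural fact being that $2^{\log_2 3}=3$, so $\left(2^{k+1}\right)^{\log_2 3}=3^{k+1}$. Write $\alpha=\log_2 3$ and, using Proposition~\ref{s and k}, put $l_2(n)=2^{k+1}s+\frac{1-(-2)^k}{3}$ in its canonical form, where $k$ is the number of trailing $1$'s in the negative binary expansion of $l_2(n)$; the proof of Lemma~\ref{k upper bound} shows $s\ge 0$, and $s=0$ forces $k$ odd (otherwise $\frac{1-(-2)^k}{3}\le 0$, contradicting $l_2(n)\ge 1$). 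Proposition~\ref{l_2 recurrence} then gives $l_2(n+1)=3^{k+1}s+\frac{1-(-3)^k}{2}$, and since $\bigl|\frac{1-(-3)^k}{2}\bigr|\le 3^k$ always, $l_2(n+1)\le 3^k(3s+1)$. The goal is to compare this with $\frac{27}{8}\,l_2(n)^{\alpha}$ and, in the special case, to compare $l_2(n+1)$ with $\frac94\,l_2(n)^{\alpha}$.

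For the upper bound I would split on $s$. If $s\ge 1$, then from $l_2(n)\ge 2^{k+1}s-\frac{2^k}{3}=2^k\bigl(2s-\tfrac13\bigr)>0$ and $\left(2^k\right)^{\alpha}=3^k$ we get $l_2(n)^{\alpha}\ge 3^k\bigl(2s-\tfrac13\bigr)^{\alpha}\ge 3^k\bigl(2s-\tfrac13\bigr)$ (using $2s-\tfrac13\ge 1$), so it suffices to verify $3s+1<\frac{27}{8}\bigl(2s-\tfrac13\bigr)$, which holds for every integer $s\ge 1$. If $s=0$, then $k$ is odd, $l_2(n)=\frac{2^k+1}{3}$ and $l_2(n+1)=\frac{3^k+1}{2}$; using $\frac{2^k+1}{3}>\frac{2^k}{3}$ we get $l_2(n)^{\alpha}>\frac{3^k}{3^{\alpha}}$, so it suffices to verify $\frac{3^k+1}{2}<\frac{27}{8}\cdot\frac{3^k}{3^{\alpha}}$; since $\frac{27}{4}>3^{\alpha}$ (numerically $6.75>5.70\dots$) this reduces to $3^k$ exceeding an absolute constant, which fails only for the smallest odd $k$, a case checked by hand. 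Because $l_2$ is strictly increasing and unbounded (Theorem~\ref{m equals 2}), the finitely many exceptional configurations occur only for finitely many $n$, which is exactly what the hypothesis "for large $n$" lets us discard.

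For the lower bound, the hypothesis $l_2(n)=\frac{1-(-2)^{k-1}}{3}$ says precisely that the canonical decomposition has $s=0$ with exponent $k-1$ (forcing $k-1$ odd), so $l_2(n+1)=\frac{1-(-3)^{k-1}}{2}=\frac{3^{k-1}+1}{2}$. Here I would bound $l_2(n)^{\alpha}$ from above: writing $2^{k-1}+1=2^{k-1}\bigl(1+2^{-(k-1)}\bigr)$ and using the convexity (chord) bound $(1+x)^{\alpha}\le 1+2x$ for $x\in[0,1]$ — valid because the endpoint values of $(1+x)^{\alpha}$ on $[0,1]$ are $1$ and $2^{\alpha}=3$ — one obtains $(2^{k-1}+1)^{\alpha}\le 3^{k-1}\bigl(1+2^{-(k-2)}\bigr)$, hence $\frac94\,l_2(n)^{\alpha}\le \frac{9}{4\cdot 3^{\alpha}}\bigl(3^{k-1}+2\,(3/2)^{k-1}\bigr)$. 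The required inequality $\frac{3^{k-1}+1}{2}\ge\frac94\,l_2(n)^{\alpha}$ then reduces to something of the form $c_1 3^{k-1}+1\ge c_2\,(3/2)^{k-1}$ with $c_1,c_2>0$ (positivity of $c_1$ using $\tfrac92<3^{\alpha}$), which holds for all sufficiently large $k$, i.e. for large $n$ by the same strict-monotonicity remark.

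I expect the main obstacle to be bookkeeping rather than any idea: keeping straight the off-by-one between the exponent $k$ of the canonical decomposition and the $k$ appearing in the statement; confirming that $s=0$ can occur only with $k$ odd, so that the correction term $\frac{1-(-3)^k}{2}$ has the expected sign; and pushing the elementary estimates through with the particular constants $\frac{27}{8}$ and $\frac94$, which have only a slim margin given $3^{\alpha}=3^{\log_2 3}\approx 5.70$, so that the argument closes precisely once the finitely many small cases are absorbed into the phrase "for large $n$".
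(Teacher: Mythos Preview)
Your argument is correct and takes a genuinely different route from the paper's. The paper first establishes Lemma~\ref{k upper bound}, which bounds the exponent $k$ in the canonical decomposition by $\lceil\log_2 l_2(n)\rceil+1$ (with equality iff $s=0$), then argues that for large $n$ the ratio $l_2(n+1)/l_2(n)$ is approximately $(3/2)^{k+1}$, and finally substitutes the bound on $k$ into this power to obtain $(3/2)^{\log_2 l_2(n)+3}=\tfrac{27}{8}\,l_2(n)^{\log_2(3/2)}$ for the upper estimate and $(3/2)^{\log_2 l_2(n)+2}=\tfrac{9}{4}\,l_2(n)^{\log_2(3/2)}$ for the lower one. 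You instead bypass Lemma~\ref{k upper bound} entirely and exploit the identity $(2^{k})^{\alpha}=3^{k}$ directly: you bound $l_2(n)$ and $l_2(n+1)$ above and below by simple multiples of $2^{k}$ and $3^{k}$, raise the former to the power $\alpha$, and reduce everything to elementary inequalities in $s$ (or, when $s=0$, in $k$), with a convexity/chord bound handling the lower estimate. The paper's route is shorter once Lemma~\ref{k upper bound} is in hand and makes transparent why the exponent is $\log_2 3$ and why the constants $27/8$ and $9/4$ appear as powers of $3/2$; your route is more self-contained, avoids the somewhat informal ``$\approx$'' step the paper uses, and gives explicit control of the error terms, at the cost of a small amount of casework. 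Both arguments need the ``for large $n$'' hypothesis to absorb finitely many small exceptions, and both rely in the same way on the strict monotonicity of $l_2$.
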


\begin{proof} Note that $\{l_2(n)\}$ is strictly increasing, so as $n$ gets large, so does $l_2(n)$.  So for large $n$, whenever $l_2(n)$ is written in the form $2^{k+1} s + \frac{1 - (-2)^k}{3}$, we must have that either $s$ or $k$ is large.  This allows us to approximate $l_2(n+1)/l_2(n)$; Proposition \ref{l_2 recurrence} implies that with we have
\begin{align*}
\frac{l_2(n+1)}{l_2(n)} &= \frac{3^{k+1} s + \frac{1 - (-3)^k}{2}}{2^{k+1} s + \frac{1 - (-2)^k}{3}}\\
&= \frac{(3^{k+1}) 6s + 3 + (-3)^{k+1}}{(2^{k+1}) 6s + 2 + (-2)^{k+1}}\\
&\approx \frac{3^{k+1} (6s + (-1)^{k+1})}{2^{k+1} (6s + (-1)^{k+1})} \\
&= \left(\frac{3}{2}\right)^{k+1}.
\end{align*}

By the Lemma \ref{k upper bound}, $k < \log_2(l_2(n)) + 2$, so
\begin{align*}
\frac{l_2(n+1)}{l_2(n)} &< \left(\frac{3}{2}\right)^{\log_2(l_2(n)) + 3}\\
&= \left(\frac{3}{2}\right)^3 \left(2^{\log_2(\frac{3}{2})}\right)^{\log_2(l_2(n))}\\
&= \frac{27}{8} l_2(n)^{\log_2(\frac{3}{2})},
\end{align*}
which means $l_2(n+1) < \frac{27}{8}l_2(n)^{1 + \log_2(3/2)} = \frac{27}{8}l_2(n)^{\log_2(3)}$.  If $s = 0$, then Lemma \ref{k upper bound} implies that $\log_2(l_2(n)) + 1 \leq k$, so by the same reasoning we get $\frac{9}{4} l_2(n)^{\log_2(\frac{3}{2})} \leq \frac{l_2(n+1)}{l_2(n)}$, which gives the desired result. \end{proof}

We can use this upper bound on the ratio between successive terms in $\{l_2(n)\}$ to obtain the following upper bound on all of its terms.  The proposition can be proved easily by induction starting with the fact that $l_2(1) = 1$.

\begin{proposition} \label{super-exponential} $l_2(n) < 8^{\left(\log_2(3)\right)^{n-1}-1}$. \end{proposition}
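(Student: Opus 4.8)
The plan is a straightforward induction on $n$, as the remark after the statement suggests, powered by the ratio estimate of Proposition \ref{l_2 upper bound} together with the single arithmetic identity $8^{\log_2 3} = 2^{3\log_2 3} = 3^3 = 27$, equivalently $\frac{27}{8} = 8^{\log_2 3 - 1}$.

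For the base case, $l_2(1) = 1$ while $8^{(\log_2 3)^{0} - 1} = 8^{0} = 1$, so the bound holds at $n = 1$ as an equality; thus the statement is really $l_2(n) \le 8^{(\log_2 3)^{n-1} - 1}$, with strict inequality appearing from $n = 2$ onward. For the inductive step, assume $l_2(n) \le 8^{(\log_2 3)^{n-1} - 1}$. Proposition \ref{l_2 upper bound} gives $l_2(n+1) < \frac{27}{8}\, l_2(n)^{\log_2 3}$, so
\begin{equation*}
l_2(n+1) < \frac{27}{8}\left(8^{(\log_2 3)^{n-1} - 1}\right)^{\log_2 3} = 8^{\log_2 3 - 1}\cdot 8^{(\log_2 3)^{n} - \log_2 3} = 8^{(\log_2 3)^{n} - 1},
\end{equation*}
which is exactly the asserted inequality with $n$ replaced by $n + 1$. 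Since this conclusion is strict, it upgrades the inductive hypothesis for the next round, and it shows the inequality is strict for every $n \ge 2$. This closes the induction.

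The only genuine obstacle is bookkeeping around the phrase ``for large $n$'' in Proposition \ref{l_2 upper bound}: the strict ratio bound $l_2(n+1) < \frac{27}{8} l_2(n)^{\log_2 3}$ is stated only asymptotically, so to run the induction from $n = 1$ one must either (i) verify the finitely many initial values of $l_2$ directly from Proposition \ref{l_2 recurrence} until the asymptotic regime is reached, or (ii) sharpen Proposition \ref{l_2 upper bound} so that the ratio bound holds for all $n \ge 1$. Option (ii) is in fact clean: writing $l_2(n) = 2^{k+1}s + \frac{1 - (-2)^k}{3}$, the exact formula for $l_2(n+1)/l_2(n)$ from Proposition \ref{l_2 recurrence} simplifies, because the cross terms $2^{k+1}(-3)^{k+1}$ and $3^{k+1}(-2)^{k+1}$ coincide (both equal $(-6)^{k+1}$), and the resulting inequality reduces to $2^k \le 3^k$; hence $l_2(n+1)/l_2(n) \le (3/2)^{k+1}$ for every $n$, and then $k < \log_2(l_2(n)) + 2$ from Lemma \ref{k upper bound} delivers the strict bound with no exceptions. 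I would adopt (ii), so the induction runs from $n = 1$ with no preliminary cases, leaving only the routine exponent manipulation displayed above.
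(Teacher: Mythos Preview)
Your proof is correct and follows exactly the route the paper intends: induction from $l_2(1)=1$ using the ratio bound of Proposition~\ref{l_2 upper bound}, with the key identity $27/8 = 8^{\log_2 3 - 1}$. You are in fact more careful than the paper on two points the paper glosses over --- the base case is an equality (so the statement should read $\le$, strict for $n\ge 2$), and the ``for large $n$'' caveat in Proposition~\ref{l_2 upper bound} needs to be removed; your option~(ii), reducing the exact ratio inequality to $2^k\le 3^k$ via the cancellation of the $(-6)^{k+1}$ cross terms, is clean and correct.
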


\begin{remark} This upper bound vastly overshoots the actual sequence, but it is difficult to obtain a better upper bound because of the second inequality given in Proposition \ref{l_2 upper bound}.  The point is this: the ratio $l_2(n+1)/l_2(n)$ is approximated by $(3/2)^{\kappa(n)}$, where $\kappa(n)$ is a positive integer whose behavior is rather unpredictable.  Quite often $\kappa(n) = 1$, in which case $\{l_2(n)\}$ behaves just like an exponential function between $n$ and $n+1$.  However, if $\kappa(n) > 1$, and especially if $\kappa(n)$ is at its maximum possible value, then between $n$ and $n+1$ the sequence $\{l_2(n)\}$ behaves more like the super-exponential function given in Proposition \ref{super-exponential}. \end{remark}

\section{Josephus problem}

The Josephus problem \cite{grahamconcrete,schumer2002josephus} is a famous problem that can be stated this way.  Suppose we have the integers 1 through $x$ ordered clockwise in a circle.  Starting with the number 1, we eliminate the $n$th number as we count clockwise.  We do this repeatedly, continuing to count only the numbers that are left until only one remains.  We call the number that remains $J_n(x)$.

Because $T_m(x,r)$ is determined by a rotate and append algorithm, there is a strong connection between the triangle $T_m$ and the Josephus problem.  It turns out that the position of the 1 in each row of $T_m$ can be used to determine $J_{m+1}(x)$.

\begin{definition} For each positive integer $x$, let $j_m(x)$ denote the column position of the 1 in row $x$ of $T_m$; thus if $T_m(x,r) = 1$ and $r \leq x - 1$, then $j_m(x) = r$. \end{definition}

\begin{definition} For each positive integer $x$, we define playing a \emph{Josephus game} on row $x$ as follows: starting with the number in column $m - 1$ mod $x$, cross out every $(m+1)$th number in the row by counting to the left cyclically (i.e. wrapping around on the right end of the row once the left end has been reached) until only one number is left.  We a define a \emph{Josephus move} as one instance of crossing out in this sequence. \end{definition}

As an example, let's see a Josephus game played on the 10th row of the triangle $T_3$.  We start at the number in column position $m - 1$ mod 10, i.e. column position 2.

\[\begin{array}{llllllllll}
4 & 3 & 2^{(1)} & 3 & 3 & 3 & 3 & 2 & 1 & 4
\end{array}\]

Now we count out every 4th element and eliminate it.

\[\begin{array}{llllllllll}
4^{(3)} & 3^{(2)} & 2^{(1)} & 3 & 3 & 3 & 3 & 2 & 1 & \not{4}^{(4)}\\
4 & 3 & 2 & 3 & 3 & \not{3}^{(4)} & 3^{(3)} & 2^{(2)} & 1^{(1)} & \not{4}\\
4 & \not{3}^{(4)} & 2^{(3)} & 3^{(2)} & 3^{(1)} & \not{3} & 3 & 2 & 1 & \not{4}\\
4^{(1)} & \not{3} & 2 & 3 & 3 & \not{3} & \not{3}^{(4)} & 2^{(3)} & 1^{(2)} & \not{4}\\
\not{4}^{(4)} & \not{3} & 2^{(3)} & 3^{(2)} & 3^{(1)} & \not{3} & \not{3} & 2 & 1 & \not{4}\\
\not{4} & \not{3} & 2 & \not{3}^{(4)} & 3^{(3)} & \not{3} & \not{3} & 2^{(2)} & 1^{(1)} & \not{4}\\
\not{4} & \not{3} & 2^{(1)} & \not{3} & \not{3}^{(4)} & \not{3} & \not{3} & 2^{(3)} & 1^{(2)} & \not{4}\\
\not{4} & \not{3} & \not{2}^{(1)(4)} & \not{3} & \not{3} & \not{3} & \not{3} & 2^{(3)} & 1^{(2)} & \not{4}\\
\not{4} & \not{3} & \not{2} & \not{3} & \not{3} & \not{3} & \not{3} & \not{2}^{(2)(4)} & 1^{(1)(3)} & \not{4}\\
\end{array}\]

And 1 is the winner!  This is not a coincidence.

\begin{proposition} \label{josephus} For every integer $x > 1$, $J_{m+1}(x)$ is the value in the set $\{1,2,\ldots,x\}$ that is equivalent to $m - j_m(x)$ mod $x$.  In other words, 1 is the winner of every Josephus game. \end{proposition}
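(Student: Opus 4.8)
The plan is to recast the displayed formula as the statement that the $1$ wins the Josephus game on row $x$, and then to prove that statement by induction on $x$, the inductive step showing that a Josephus game on row $x$ simply dismantles the triangle one row at a time.

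Two preliminary remarks set this up. First, in each row the entry $1$ occupies a unique cell: since every appended entry equals $1 + T_m(x-1,0) \geq 2$, no new $1$'s are ever created, so the only $1$ anywhere in the triangle is the original $T_m(1,0)$ carried around by the successive rotations; in particular $j_m(x)$ is well defined. Second, the Josephus game on row $x$ is, after a relabeling, exactly the classical Josephus problem that computes $J_{m+1}(x)$: the map sending classical position $i \in \{1,\ldots,x\}$ to column $(m-i)\bmod x$ of row $x$ sends the classical starting point $1$ to column $(m-1)\bmod x$ and sends the clockwise counting direction to the leftward direction, and hence it conjugates the two elimination processes step for step. Consequently $J_{m+1}(x)$ is the classical position matched with the surviving column of the game on row $x$; so once we know that the $1$, which sits in column $j_m(x)$, wins that game, it follows that $J_{m+1}(x)$ is the representative in $\{1,\ldots,x\}$ of $m - j_m(x) \bmod x$, which is exactly the assertion of the proposition.

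It remains to prove, by induction on $x$, that the Josephus game on row $x$ leaves the $1$. The case $x=1$ is trivial and $x=2$ is immediate (the first move deletes the appended $2$). Let $x \geq 2$. The first crossing-out of the game, starting at column $(m-1)\bmod x$ and counting $m+1$ places to the left, lands on column $\big((m-1)-m\big)\bmod x = x-1$, that is, on the most recently appended entry, whose value is $\geq 2$ and so is not the $1$. After this deletion the $x-1$ surviving cells are exactly the individuals of row $x-1$: by the rotate-left-by-$m$ step in the construction of $T_m$, column $c$ of row $x$ holds, for $0 \leq c \leq x-2$, the same individual as column $(c+m)\bmod(x-1)$ of row $x-1$, and this identification is a cyclic rotation of the circle, so it preserves both the cyclic order and the leftward direction of counting. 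Moreover the count resumes at the cell immediately to the left of the deleted column $x-1$, namely column $x-2$ of row $x$, which under the identification is column $(x-2+m)\bmod(x-1) = (m-1)\bmod(x-1)$ of row $x-1$ --- precisely the prescribed starting column of the Josephus game on row $x-1$. Hence the remainder of the game on row $x$ is, verbatim under the identification, the Josephus game on row $x-1$, which by the inductive hypothesis is won by the $1$; therefore the game on row $x$ is won by the $1$ as well.

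The step I expect to demand the most care is this last identification: one must check simultaneously that (i) the cells surviving the first deletion match, via the explicit rotation $c \leftrightarrow (c+m)\bmod(x-1)$, exactly the cells of row $x-1$; (ii) that rotation preserves the cyclic order and the leftward direction in which one counts; and (iii) the cell at which counting resumes is carried to row $x-1$'s designated starting column $(m-1)\bmod(x-1)$. Once these three points are nailed down, the induction closes, and together with the relabeling in the second paragraph it yields the proposition --- without our ever having to invoke the classical Josephus recurrence.
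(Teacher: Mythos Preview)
Your proof is correct and follows essentially the same induction as the paper's: first eliminate the freshly appended cell in column $x-1$, then identify what remains with the Josephus game on the previous row via the rotation $c \leftrightarrow (c+m)\bmod(x-1)$, checking that the resumed starting point lands on column $(m-1)\bmod(x-1)$. If anything, you are more careful than the paper in two places---you spell out why the $1$ is unique in each row (so $j_m(x)$ is well defined) and you give the explicit relabeling $i \mapsto (m-i)\bmod x$ between the classical Josephus positions and the row columns, whereas the paper dispatches that last translation with a single sentence.
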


\begin{proof} We will show that 1 is the winner of any Josephus game in row $x > 2$.  Suppose $x = 2$.  The second row is always a 1 followed by a 2.  The Josephus game will consist of one move starting on either the 1 or the 2.  Since we start on column position $m - 1$ mod 2, which is the same as $m + 1$ mod 2, we start with the 1 if $m + 1$ is even or the 2 if $m + 1$ is odd; in either case, it is clear that we eliminate the 2.  Thus the 1 wins.

\[\begin{array}{ccccc}
\multicolumn{2}{c}{m + 1\text{ even}} & or & \multicolumn{2}{c}{m + 1\text{ odd}}\\
(1) & 2 & \ & 1 & (2)\\
1 & \not{2} & \ & 1 & \not{2}
\end{array}\]

Now suppose the 1 wins in row $x$ for some arbitrary $x > 1$.  We will show that it wins in row $x + 1$.  First, let $r = m - 1$ mod $x + 1$.  The first Josephus move will eliminate the number in column $r - m$ mod $x + 1$ (that's what it means to count to the left).  Thus the first number eliminated is in column $-1$ mod $x + 1$, i.e. column $x$.  So the next Josephus move starts on the number in position $x - 1$.  Now observe that $T(x+1,x-1) = T(x,x+m-1) = T(x,m-1)$.  Thus starting the second Josephus move on the element in position $x - 1$ in row $x + 1$ is just like starting a Josephus game in position $m - 1$ mod $x$ in row $x$, and since the 1 is the winner in row $x$, it must also be the winner in row $x + 1$.  By the principle of mathematical induction, the 1 wins in every row $x > 1$.

To see that $J_{m+1}(x)$ is the value in the set $\{1,2,\ldots,x\}$ that is equivalent to $m - j_m(x)$ mod $x$, just count in the appropriate fashion from position $m - 1$ mod $x$ to position $j_m(x)$. \end{proof}

To clarify the inductive step of the proof, we offer an example.  Here are the 9th and 10th rows of $T_3$.
\[\begin{array}{llllllllll}
3 & 2 & 1 & 4 & 3 & 2 & 3 & 3 & 3 & \\
4 & 3 & 2 & 3 & 3 & 3 & 3 & 2 & 1 & 4
\end{array}\]
We will show that if the 1 wins a Josephus game in row 9, then it also wins in row 10.  Now recall the Josephus game on the 10th row.  Starting with the number in column position 2, we count four positions to the left and eliminate the number in that position:
\[\begin{array}{llllllllll}
4^{(3)} & 3^{(2)} & 2^{(1)} & 3 & 3 & 3 & 3 & 2 & 1 & \not{4}^{(4)}\\
\end{array}\]
So we are left with
\[\begin{array}{lllllllll}
4 & 3 & 2 & 3 & 3 & 3 & 3 & 2 & 1^{(1)}
\end{array}\]
with the starting position in the next Josephus move already marked.  When we view this as a cycle, this is the same as
\[\begin{array}{lllllllll}
3 & 2 & 1^{(1)} & 4 & 3 & 2 & 3 & 3 & 3
\end{array}\]
which is exactly what we would have at the beginning of a Josephus game on row 9.  So if the 1 wins in row 9, then it also wins in row 10, and so on.  By induction, it wins in all rows.

\section{Conclusion}

Let us summarize what we have done in the main body of this paper.  First, we explored the problem of tracking an element of the triangle $T_m$ as it appears in the first $m$ columns.  This involved developing technical background, culminating in a long discussion of \emph{rotation remainder expansions}.  We used this technical background to prove three theorems regarding the row and column tracking sequences $\{x_n(x,r)\}_{n=0}^\infty$ and $\{r_n(x,r)\}_{n=0}^\infty$.  Then we explored the other questions posed in the introduction: the frequency with which the 1 appears in the front of rows in $T_m$, the frequency with which news numbers appear in $T_m$, and the frequency with which a number appears in a given row.  We answered these questions by proving several results for the cases $m = 1$ and $m = 2$; for $m > 2$, the same results are much more difficult to prove.  Finally, we proved a result that connects our questions about $T_m$ with the Josephus problem.

To conclude this paper, let us list some problems that were left unanswered in this paper.

\begin{itemize}

\item Concerning column tracking sequences $\{r_n(x,r)\}_{n=0}^\infty$: we were able to prove that every column tracking sequence is aperiodic, but empirical evidence suggests a stronger result.  We conjecture that, given any column tracking sequence $\{r_n(x,r)\}_{n=0}^\infty$, if $r$ is any least nonnegative residue modulo $m$, then there exists some positive integer $k$ such that $r_k(x,r) = r$.  This is very hard to prove, but it is almost certainly true.

\item Concerning the frequency with which 1 appears at the head of a row: the aperiodicity of column tracking sequences resulted in Theorem \ref{m equals 2}; yet we were unable to prove a similar theorem for $m > 2$ because aperiodicity is not enough to show that $\{r_n(x,r)\}_{n=0}^\infty$ always has a zero in it if $m > 2$.  (This question is clearly related to the previous question).  This question, whether there is always a zero in any column tracking sequence, is equivalent to the question of whether every such sequence has infinitely many zeroes.  But by Proposition \ref{josephus}, this question is equivalent to the question, are there infinitely many integers $x$ such that $J_{m+1}(x) = m$?  This is a hard problem, and it remains unsolved.

\item The frequency with which new numbers appear in the triangle $T_m$ is very much related to the previous problem because a new number appears in $T_m$ only when the current greatest number in $T_m$ reaches the head of a row.  In particular, we could an obtain upper bound on how long it takes for a new number to be generated if we had a more precise answer to how long it takes before a number reaches the head of a row.

\item Computing the frequency with which a number appears in a given row is very difficult even in the case $m = 2$.  It is very unclear how to come up with a general strategy for arbitrary values of $m$.

\end{itemize}

With these questions, we conclude this paper.

\bibliographystyle{acm}
\bibliography{C:/mybib/mybib}

\end{document}